\documentclass[12pt]{article}


\usepackage{latexsym}
\usepackage{a4wide}
\usepackage{amscd}
\usepackage{graphics}
\usepackage{amsmath}
\usepackage{amssymb}
\usepackage{esint}
\usepackage{mathrsfs}
\usepackage{amsthm}
\usepackage{bbm}
\usepackage{color}
\usepackage{accents}
\usepackage{enumerate}
\input xy
\xyoption{all}


\newcommand{\N}{\mathbb{N}}                     
\newcommand{\R}{\mathbb{R}}                     
\newcommand{\set}[2]{\left\{{#1}\mid{#2}\right\}}       
\newcommand{\re}{\mathrm{Re\,}}                 
\newcommand{\ind}{\mathrm{ind}}               
\newcommand{\supp}{\mathrm{supp\,}}             
\newcommand{\graf}{\mathrm{graph\,}}            


\newtheorem{mainthm}{\sc Theorem}           
\newtheorem{thm}{\sc Theorem}[section]               
\newtheorem*{thm*}{\sc Theorem}               
\newtheorem{cor}[thm]{\sc Corollary}        
\newtheorem*{cor*}{\sc Corollary}        
\newtheorem{lem}[thm]{\sc Lemma}            
\newtheorem{prop}[thm]{\sc Proposition}     
\newtheorem{defn}[thm]{\sc Definition}      
\newtheorem{rem}[thm]{\sc Remark}           
\newtheorem{ex}[thm]{\sc Example}           

\title{Stable foliations and CW-structure induced by a Morse-Smale gradient-like flow}

\author{Alberto Abbondandolo and Pietro Majer}

\date{}

\begin{document}

\maketitle

\begin{abstract}
We prove that a Morse-Smale gradient-like flow on a closed manifold has a ``system of compatible invariant stable foliations'' that is analogous to the object introduced by Palis and Smale in their proof of the structural stability of Morse-Smale diffeomorphisms and flows, but with finer regularity and geometric properties. We show how these invariant foliations can be used in order to give a self-contained proof of the well-known but quite delicate theorem stating that the unstable manifolds of a Morse-Smale gradient-like flow on a closed manifold $M$ are the open cells of a $CW$-decomposition of $M$.
\end{abstract}

\section*{Introduction}

\paragraph{The setting.} Let $M$ be a closed smooth manifold and $\phi: \R \times M \rightarrow M$ a flow of class $C^1$.  We recall that $\phi$ is said to be {\em gradient-like}  if its chain recurrent set consists of stationary points or, equivalently, if there is a continuous real function on $M$ that strictly decreases on each non-stationary orbit of $\phi$. The flow $\phi$ is said to be {\em Morse} if all its stationary points are hyperbolic, meaning that for such a point $x$ the linear mapping $L_x: T_x M \rightarrow T_x M$ which is defined by
\[
d\phi^t(x) = e^{tL_x},
\]
does not have purely imaginary eigenvalues. In this case, the tangent space of $M$ at $x$ has an $L_x$-invariant splitting
\[
T_x M = E_x^u \oplus E^s_x
\]
such that all the eigenvalue of $L_x|_{E^u_x}$ (resp.\ $L_x|_{E^s_x}$) have positive (resp.\ negative) real part. When $\phi$ is gradient-like and Morse, the {\em unstable} and {\em stable manifold} of each stationary point $x$, that is the sets
\[
W^u(x) := \{ p \in M \mid \phi^t(p) \rightarrow x \mbox{ for } t\rightarrow -\infty\}, \; W^s(x) := \{ p \in M \mid \phi^t(p) \rightarrow x \mbox{ for } t\rightarrow +\infty\},
\]
are embedded $C^1$ submanifolds and their tangent spaces at $x$ are $E^u_x$ and $E^s_x$, respectively. The number
\[
\mathrm{ind} (x) := \dim E^u_x = \dim W^u(x)
\]
is called {\em Morse index} of $x$. The flow $\phi$ is said to be {\em Smale} if for every pair of stationary points $x,y$ the unstable manifold of $x$ and the stable manifold of $y$ meet transversally. In particular, they can have a non-empty intersection only if $\mathrm{ind}(x) > \mathrm{ind}(y)$ or $x=y$.

Throughout this paper, $\phi$ will be a  gradient-like Morse-Smale flow of class $C^1$ on the closed manifold $M$. An important example is the negative gradient flow of a $C^2$ Morse function $f: M \rightarrow \R$ with respect to a generic Riemannian metric. Notice that, in this case, the stationary points of $\phi$ are the critical points of $f$ and if $x$ is such a critical point then $L_x$ is minus the Hessian of $f$ at $x$, so its eigenvalues are real.
 
\paragraph{The main result.} It is well known that the unstable manifold of a stationary point of $\phi$ of index $k$ is the image of an open $k$-dimensional ball by a $C^1$ embedding. Moreover, the unstable manifolds of stationary points form a partition of $M$ and the Morse-Smale assumption implies that the closure of each unstable manifold $W^u(x)$ is the union of $W^u(x)$ and some unstable manifolds of index strictly less than $\mathrm{ind}(x)$. This suggests that the unstable manifolds of stationary points should form a $CW$-decomposition of the manifold $M$. What one has to prove in order to have this is the existence of a homeomorphism from an open ball of dimension $\mathrm{ind}(x)$ onto $W^u(x)$ that extends continuously to the closure of the ball. Discussing a proof of this result is the first aim of this paper. Morse precisely, we shall give a detailed proof of the following theorem, in which $B^k$ denotes the open unit ball of $\R^k$.

\begin{mainthm}
\label{introthm1}
Let $\phi$ be a gradient-like Morse-Smale flow of class $C^1$ on the closed manifold $M$. For every stationary point $x$ of $\phi$ with Morse index $k$ there exists a continuous map
\[
\varphi: \overline{B^k} \rightarrow \overline{W^u(x)}
\]
whose restriction to $B^k$ is a homeomorphism onto $W^u(x)$. In particular, the unstable manifolds $W^u(x)$ are the open cells of a $CW$-decomposition of $M$.
\end{mainthm}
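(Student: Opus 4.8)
\section*{Proof proposal}

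The plan is to argue by induction on the Morse index $k=\mathrm{ind}(x)$, using the invariant stable foliations as the essential tool. The case $k=0$ is trivial: $x$ is a sink, $W^u(x)=\overline{W^u(x)}=\{x\}$, and $\varphi$ is the constant map. So let $k\ge 1$ and assume the theorem — together with the bookkeeping that the characteristic maps $\varphi_z$ of the cells $W^u(z)$ have been chosen coherently — for all stationary points of index $<k$. The first step is a reduction. Using a smooth Lyapunov function $f$ for $\phi$, the local structure of $\phi$ at the hyperbolic point $x$, and the facts recalled in the introduction ($W^u(x)$ is the image of a $C^1$ embedding of $B^k$, and $\overline{W^u(x)}\setminus W^u(x)$ is a union of unstable manifolds of index $<k$), one obtains a smoothly embedded $(k-1)$-sphere $S\subset W^u(x)$ — the intersection of $W^u(x)$ with a regular level $\{f=a\}$, $a$ just below $f(x)$ — bounding a closed $k$-disk $D\cong\overline{B^k}$, such that the flow map $(q,t)\mapsto\phi^t(q)$ is a diffeomorphism of $S\times[0,\infty)$ onto $W^u(x)\setminus\mathrm{int}\,D$ (injectivity and surjectivity use that $f$ decreases strictly along orbits and that $W^u(x)$ is flow-invariant). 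Gluing $D$ to $S\times[0,\infty)$ and reparametrising $[0,\infty)\cong[0,1)$ produces a homeomorphism $h:B^k\to W^u(x)$; since $B^k$ is dense in $\overline{B^k}$, any continuous extension $\varphi$ of $h$ to $\overline{B^k}$ automatically has image equal to the compact set $\overline{W^u(x)}$, and — by the construction below — sends $\partial B^k$ into $\mathcal B:=\overline{W^u(x)}\setminus W^u(x)$, which by the Morse--Smale property is the union of the cells $W^u(z)$ with $\mathrm{ind}(z)<k$ lying in $\overline{W^u(x)}$, a finite $CW$-complex of dimension $\le k-1$ by the inductive hypothesis. So everything reduces to the statement $(\star)$: the flow map $S\times[0,\infty)\to W^u(x)$ extends continuously to $S\times[0,\infty]\to\overline{W^u(x)}$. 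Given $(\star)$, attaching a $k$-cell along $\varphi|_{\partial B^k}$ — simultaneously for all index-$k$ stationary points, then passing to the next index — exhibits $M=\bigsqcup_z W^u(z)$ as a $CW$-complex, the $CW$ axioms being automatic because $M$ is compact.

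The content is entirely in $(\star)$, and this is where the invariant stable foliations — and the delicacy — enter. The naive candidate for the value at $t=+\infty$, the $\omega$-limit $\omega(q)$ (a single stationary point), is discontinuous in $q$: as it is flowed downward, $S$ is stretched more and more while it sweeps past the lower stationary points, so two nearby points of $S$ may be separated by the flow, one entering a small neighbourhood of some $y$ and lingering there, the other just missing $W^s(y)$ and escaping. The remedy is to ``slow the flow down'' near each lower stationary point $y$, coherently over all the $y$'s, and this coherence is exactly what a system of compatible invariant stable foliations furnishes. For each $y$ with $\mathrm{ind}(y)<k$ appearing in $\mathcal B$ one has a neighbourhood $N_y$ of $y$ foliated by flow-invariant ``stable leaves'' transverse to $W^u(y)$, with leaf projection $\pi_y:N_y\to W^u(y)$; invariance means $\pi_y$ intertwines the flow with itself, and compatibility means that for $z$ with $W^u(z)\cap W^s(y)\neq\emptyset$ the leaves of the foliation of $z$ are subordinate to those of $y$, so that $\pi_y\circ\pi_z=\pi_y$ wherever both are defined. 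I would use these to replace, near each $y$, the segment of the orbit of $q$ lying in $N_y$ by its $\pi_y$-projection onto $W^u(y)$ — which by flow-invariance is itself a piece of a trajectory in $W^u(y)$ — and then, $\overline{W^u(y)}$ being already parametrised by $\varphi_y$, to continue the construction inside $\overline{W^u(y)}$ using $\varphi_y$; the relation $\pi_y\circ\pi_z=\pi_y$, applied along the chain $y_1,y_2,\dots$ of stationary points whose neighbourhoods the orbit of $q$ visits in turn, guarantees that the various local prescriptions are mutually consistent and patch to a well-defined map on all of $S$, which by flow-invariance of the foliations glues continuously to the flow map already defined on $S\times[0,\infty)$.

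The main obstacle is proving continuity of this extension at the ``bad'' points of $S\times\{+\infty\}$ — the points whose orbit lies on, or merely comes arbitrarily close to, some $W^s(y)$, and above all the points whose orbit shadows a long broken trajectory through several stationary points. One must show that if $q'\to q$ in $S$ then the position where the orbit of $q'$ enters $N_y$ converges, that its image under $\varphi_y\circ\pi_y$ converges to the value prescribed for the limiting orbit (which genuinely meets $W^s(y)$, so that $\pi_y$ carries it onto the corresponding point of $W^u(y)$), and that this persists under iteration along a descending chain of stationary points — with estimates uniform enough to beat the unbounded stretching of $S$. These are precisely the properties encoded in the invariance, transversality, and compatibility of the stable foliations, together with the finer $C^1$ and metric regularity established in this paper (which is where the present approach improves on the merely continuous foliations of Palis and Smale). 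The remaining verifications — that $\varphi$ so defined does restrict to the homeomorphism $h$ on $B^k$, that $\varphi(\partial B^k)\subseteq\mathcal B$, and that the characteristic maps assembled over all stationary points are coherent so that the $CW$-structure is unambiguous — are then routine.
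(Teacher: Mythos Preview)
Your setup and intuition are right: reducing to a parametrisation of $W^u(x)$ that extends continuously to the closed ball, and recognising that the compatible stable foliations $\mathscr{L}^s_y$ are the tool for controlling the boundary behaviour. But there is a genuine gap in the construction you propose for $(\star)$.

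You parametrise $W^u(x)\setminus\mathrm{int}\,D$ by the \emph{original} flow, $(q,t)\mapsto\phi^t(q)$, and then seek a continuous extension to $S\times\{+\infty\}$. Continuity forces the boundary value at $(q,+\infty)$ to be $\lim_{t\to\infty}\phi^t(q)=\omega(q)$, and as you yourself note this is discontinuous. Your remedy---``replace the segment of the orbit lying in $N_y$ by its $\pi_y$-projection onto $W^u(y)$''---does not salvage the extension: the projection lands in $W^u(y)$, not in $W^u(x)$, so the modified map on $S\times[0,\infty)$ is no longer a homeomorphism onto $W^u(x)$, and you have lost the very object you were trying to extend. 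If instead you keep $\phi$ on $S\times[0,\infty)$ and only use the projections to \emph{define} the boundary values, those values are forced by continuity to agree with $\omega(q)$, and you are back where you started. The recursive appeal to the inductively given $\varphi_y$ does not close this gap either: $\varphi_y$ tells you that $\overline{W^u(y)}$ is a cell, not how to glue $W^u(x)$ continuously to it.

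The missing idea, and what the paper actually does, is to change the parametrising flow on $W^u(x)$. One builds a continuous vector field $Y$ on $W^u(x)\setminus\{x\}$ that is tangent to the traces $L^s_y(p)\cap W^u(x)$ of the stable leaves and points toward the base point $p\in W^u(y)$ along each such leaf (the auxiliary sphere foliations $\mathscr{S}^s_y$ make ``toward the base point'' precise). The refinement property of Theorem~\ref{introthm2} guarantees that these local prescriptions patch; the regularity statement (iii) there guarantees that $Y$, though only continuous, is uniquely integrable. The resulting flow $\theta$ has the property that every forward orbit stays in $W^u(x)$, is eventually trapped in a single leaf $L^s_y(p_\infty)\cap W^u(x)$, and converges to $p_\infty\in W^u(y)\subset\mathcal B$, with the limit map $p\mapsto p_\infty$ continuous. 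This is exactly the ``slowing down'' you are after, but realised as a flow \emph{inside} $W^u(x)$ rather than as a projection out of it. Juxtaposing $\phi$ near $x$ with $\theta$ away from $x$ gives a flow $\psi$ on $W^u(x)$ whose flow map \emph{does} extend continuously to $S\times[0,\infty]$, and $\varphi$ is read off from $\psi$. No induction on the index is needed.
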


Versions of this theorem under various additional assumptions have been proven by several authors. To the best of our knowledge, the above version for general gradient-like Morse-Smale flows is not covered by the results in the literature. Before explaining the idea of our proof, we briefly recall the history of this theorem.

\paragraph{Some history.} In an influential note from 1949, Thom observed that the unstable manifolds of the negative gradient flow of a Morse function are open cells, see \cite{tho49}. He did not assume the Smale transversality condition - which was yet to come - but observed that the cell decomposition that one gets is still good enough to deduce the Morse inequalities and vanishing results for the homotopy groups of manifolds admitting Morse functions without critical point in certain index ranges. He did not address the question of the structure of the closure of unstable manifolds. In this respect, it should be remarked that, without the Smale transversality condition, even the first part of Theorem \ref{introthm1} does not hold: Not only the closure of the unstable manifold of a stationary point $x$ of Morse index $k$ might contain points belonging to other unstable manifolds of dimension $k$ or higher, as well known examples show, but it is actually possible that no homeomorphism from $B^k$ onto $W^u(x)$ extends continuosly to the closure of $B^k$. Since we were unable to find an example in the literature, we include one in Appendix \ref{nonsmale}.

Smale introduced the transversality condition that is now named after him in \cite{sma61}, in which he proved that the negative gradient flow of a given Morse function does satisfy it for a generic choice of the metric. He also extensively studied the properties of what are now called Morse-Smale flows - both with the gradient-like assumption that is considered here and in the more general case in which hyperbolic periodic orbits are allowed - in \cite{sma60}. Dynamical ideas are also pervasive in Milnor's books on Morse theory \cite{mil63} and - even more - on the $h$-cobordism theorem \cite{mil65b}. However, neither Smale nor Milnor addressed the question of the structure of the closure of unstable manifolds directly. But as Franks observed in \cite{fra79}[Theorem 3.2], from Milnor's work it is possible to extract the following statement: Given a Morse function $f$ on $M$, there exists a $CW$-complex that is homotopy equivalent to $M$ and has one $k$-cell for each critical point of $f$ of Morse index $k$.  

The first proof of the fact that unstable manifolds are indeed the open cells of a $CW$-decomposition of $M$  is due to Kalmbach \cite{kal75}. She considered the gradient flow of an arbitrary Morse function $f$ and, following Milnor, chose the metric on $M$ so that - among other conditions - the negative gradient of $f$ has the special form
\begin{equation}
\label{specform}
x_1 \,\frac{\partial}{\partial x_1} + \dots + x_k \,\frac{\partial}{\partial x_k}  - x_{k+1} \,\frac{\partial}{\partial x_{k+1}}  - \dots - x_n \, \frac{\partial}{\partial x_n} 
\end{equation}
in suitable local coordinates near each critical point of $f$. Here $k$ is the Morse index of the critical point. The Morse Lemma guarantees that metrics such that this holds always exist. This is a very natural choice to make when the object of study is the Morse function,  and the same assumption has been considered in most of the subsequent works on this subject. The advantage of the form (\ref{specform}) is not only that the vector field is linear in the special coordinates (recall that the flow near a hyperbolic stationary point can always be linearized, but by a conjugacy that is in general no more than H\"older continuous), but also that the rates of contraction and expansion are the same in all the stable and unstable directions. Condition (\ref{specform}) appears also in the investigations on other aspects of Morse-Smale gradient flows, such as Harvey and Lawson's approach to Morse theory via currents \cite{hl01}, in which Riemannian metrics such that the negative gradient vector field of the Morse function $f$ has the form (\ref{specform}) near each critical point are called $f$-tame (see also \cite{min15} for a way to remove the $f$-tameness condition in this context).

By using the same assumption (\ref{specform}), Laudenbach studied the structure of the closure of the unstable manifolds of gradient flows and showed that they are ``submanifolds with conical singularities'', see \cite{lau92}. From this fact, it is possible to show that the unstable manifolds of a Morse-Smale gradient flow satisfying (\ref{specform}) are the cells of a $CW$-decomposition (see \cite{lau92}[Remark 3]). Two years later, Latour introduced an abstract compactification of the unstable manifolds of a gradient flow that has been widely used since then: Given two critical points $x,y$, once considers the space $\overline{\mathcal{M}}(x,y)$ of all unparametrized broken orbits from $x$ to $y$, that is, $m$-uples $(u_1,\dots,u_m)$ where $u_j$ is an orbit connecting two distinct critical points $x_{j-1}$ and $x_j$, where $x_0=x$ and $x_m=y$. One can put a natural topology on the disjoint union
\[
\widehat{W}^u(x) := W^u(x) \sqcup \bigcup_y \overline{\mathcal{M}}(x,y) \times W^u(y),
\]
where the last union ranges on the set of critical points $y$ of index less than $\mathrm{ind}(x)$, so that this space is compact and the map
\[
\mathrm{ev}: \widehat{W}^u(x) \rightarrow \overline{W^u(x)}
\]
which is the identity on $W^u(x)$ and maps each $(u,p)\in \overline{\mathcal{M}}(x,y) \times W^u(y)$ into $p$ is continuous. A possible strategy leading to Theorem \ref{introthm1} is then to prove that the abstract compactification $\widehat{W}^u(x)$ is homeomorphic to a closed ball. This is not done in Latour's paper, which instead deals with the structure of manifolds with corners of these compactifications, in the more general setting of Morse-Novikov theory.

The approach via abstract compactification has been explicitly worked out  by Audin and Damian in their introductory book on Floer homology \cite{ad14}: Assuming the normal form (\ref{specform}), this book contains a neat and complete proof of the fact that the compactification $\widehat{W}^u(x)$ is homeomorphic to a closed ball. Also very readable and extremely detailed is Qin's paper \cite{qin10}. This paper contains a detailed description of the structure of manifold with corners on $\widehat{W}^u(x)$, as well as a proof of the fact that this space is homeomorphic to a closed ball. Qin works in the more general setting of functionals on possibly infinite dimensional Hilbert manifolds, but his paper is a good reference also for the readers interested in the finite dimensional situation.

A detailed description of the structure of manifold with corners of the abstract compactification of $W^u(x)$ is also contained in Burghelea, Friedlander and Kappeler's paper \cite{bfk10}, which however does not deal with the question of determining the topology of $\widehat{W}^u(x)$. 

In all the above mentioned references, assumption (\ref{specform}) plays an important role. In \cite{bc07} Barraud and Cornea proved Theorem \ref{introthm1} for Morse-Smale gradient flows without assuming the normal form (\ref{specform}). They however assumed that the Morse function has a unique critical point of Morse index zero. Their proof is still based on an abstract compactification and uses gluing techniques from Floer theory as well as the notion of topological transversality. It is somehow less elementary than the above mentioned proofs by Kalmbach, Audin-Damian and Qin.  

Finally, in the subsequent paper \cite{qin11}, Qin showed how Theorem \ref{introthm1} for an arbitrary gradient flow of a Morse function satisfying the Smale transversality condition can be deduced from the corresponding statement for a  Morse-Smale gradient flow having the form (\ref{specform}) near the critical points. The idea, which was actually suggested by Franks in the already mentioned \cite{fra79}, is to show that the given Morse-Smale gradient flow can be connected to a Morse-Smale gradient flow satisfying (\ref{specform}) by a smooth path of gradient flows which are all Morse-Smale and have the same stationary points. The existence of a path with this properties is not immediate, but similar results were proved by Newhouse and Peixoto in \cite{np76}. Once the existence of this path has been established, the structural stability theorem for Morse-Smale flows (see \cite{ps70}) implies that any two gradient flows in the path are topologically equivalent, meaning that there is a homeomorphism of $M$ mapping orbits of the first flow into orbits of the second one. In particular, the two gradient flows at the end-points of the path are topologically equivalent, and hence the abstract compactifications of their unstable manifolds are homeomorphic. 

Summarizing: From the available literature it is possible to deduce Theorem \ref{introthm1} for the negative gradient flow of an arbitrary  smooth Morse function, with respect to any Riemannian metric for which the flow has the Smale transversality condition: One needs to put together a proof for the special form (\ref{specform}) - such as \cite{kal75}, \cite{lau92}, \cite{qin10} or \cite{ad14} - with the argument \`a la Peixoto-Newhouse that is fully explained in \cite{qin11} and the structural stability of Morse-Smale flows from \cite{ps70}. 

\paragraph{A proof based on stable foliations.} The proof of Theorem \ref{introthm1} that is contained in this paper is somehow more direct than the existing ones, as it does not use the abstract compactification. Moreover, it does not need the vector field generating the flow to have the special form (\ref{specform}) near the stationary points, which are just assumed to be hyperbolic. Non real eigenvalues of the linearization of the flow at stationary points are also allowed, so the flow $\phi$ is not necessarily the gradient flow of a Morse function. 

We do however need some high regularity on the flow, and in order not to be forced to keep track of the regularity degrees, we shall just assume the Morse-Smale gradient-like flow $\phi$ to be smooth (see Remark \ref{smooth?} below for comments on this). Theorem \ref{introthm1} in the smooth category immediately implies it in the $C^1$ category: If $\phi$ is a $C^1$ Morse-Smale gradient-like flow $\phi$ then by the structural stability theorem any $C^1$ flow that is $C^1_{\mathrm{loc}}$-close to it is topologically equivalent to it, so it is enough to approximate $\phi$ in the $C^1_{\mathrm{loc}}$ topology with a smooth flow and get the desired conclusion.

Let $x$ be a stationary point of Morse index $k$. The main part of the proof consists in constructing a continuous flow $\psi$ on $W^u(x)$ with the following properties:
\begin{enumerate}[(a)]
\item The only stationary point of $\psi$ is $x$ and $\psi^t(p)=\phi^t(p)$ for every $p$ in a neighborhood of $x$ and every $t\leq 0$. 
\item The orbit $\psi^t(p)$ of every $p\in W^u(x)$ tends to $x$ for $t\rightarrow -\infty$.
\item The orbit $\psi^t(p)$ of every $p\in W^u(x)\setminus \{x\}$ tends to some point $\omega(p)$ in $\overline{W^u(x)} \setminus W^u(x)$ for $t\rightarrow +\infty$.
\item The map $\omega: W^u(x) \setminus \{x\} \rightarrow \overline{W^u(x)}$ is continuous.
\end{enumerate}
Having such a flow, it is immediate to construct a map $\varphi: \overline{B^k} \rightarrow \overline{W^u(x)}$ satisfying the requirements of Theorem \ref{introthm1} (see Corollary \ref{daldisco} below). Note that the restriction of the flow $\phi$ to $W^u(x)$ satisfies (a), (b) and (c) but not (d), except in the trivial cases in which either $x$ has index $0$ (so that $W^u(x)=\{x\}$) or
$\overline{W^u(x)} \setminus W^u(x)$ consists of just one point (which is then a stationary point of index zero).

The main tools in our construction of the flow $\psi$ are invariant stable foliations. Let $x$ be stationary point of $\phi$ of index $k$ and let $U_x$ be an open $\phi$-invariant neighborhood of $x$. By invariance, $U_x$ is actually a neighborhood of $W^u(x) \cup W^s(x)$. An invariant stable foliation of $U_x$ is a foliation
\[
\mathscr{L}^s_x = \{L^s_x(p)\}_{p\in W^u(x)} 
\]
of $U_x$ such that each leaf $L^s_x(p)$ is a smoothly embedded $(n-k)$-dimensional manifold that meets $W^u(x)$ transversally at $p$, such that the invariance condition
\[
\phi^t(L^s_x(p)) = L^s_x(\phi^t(p))
\]
holds for every $p\in W^u(x)$ and every $t\in \R$. In particular, the leaf $L^s_x(x)$ coincides with the stable manifold $W^s(x)$. Similar objects appear in \cite{ps70} under the name of ``tubular families of $W^s(x)$''.

The leaves of this foliation are smooth submanifolds, but the foliation needs not be smooth in the usual sense. It is however ``partially smooth'', meaning the map that to each $p\in W^u(x)$ associates the embedding whose image is $L^s_x(p)$ can be chosen to be continuous into space of smooth embeddings, which is endowed with the $C^{\infty}$ topology. See Definition \ref{admissible} below for a more precise local definition. In particular, the map that maps every point $p\in U_x$ to the tangent space at $p$ to the leaf of $\mathscr{L}^s_x$ through $p$ is continuous into the Grassmannian of $(n-k)$-dimensional subspaces of $TM$. 

Invariant stable foliations are easy to build. More delicate is to show that it is possible to build a system of invariant stable foliations $\mathscr{L}^s_x$ for every stationary point $x$ of $\phi$ such that the foliation $\mathscr{L}^s_x$ refines $\mathscr{L}^s_y$ whenever $\mathrm{ind}(x) \geq \mathrm{ind}(y)$. This means that each leaf of $\mathscr{L}^s_x$ is either disjoint from the domain of $\mathscr{L}^s_y$ or fully contained in a leaf of $\mathscr{L}^s_y$. More precisely, we shall prove the following result.

\begin{mainthm}
\label{introthm2}
Let $\phi$ be a smooth gradient-like Morse-Smale flow on the closed manifold $M$. For every stationary point $x$ of $\phi$ there are an invariant stable foliation $\mathscr{L}^s_x$ of an open invariant neighborhood $U_x$ of $x$ as defined above and an invariant partially smooth foliation $\mathscr{S}^s_x$ of $U_x \setminus W^u(x)$ such that the following conditions hold:
\begin{enumerate}[(i)]
\item If $\mathrm{ind}(x)=k$, then the leaves of $\mathscr{S}^s_x$ are $(n-k-1)$-dimensional spheres and this foliation refines $\mathscr{L}^s_x$. Each leaf of $\mathscr{S}^s_x$ is transverse to every unstable manifold $W^u(y)$.
\item  If $\mathrm{ind}(x) \geq \mathrm{ind}(y)$ then the foliation $\mathscr{L}^s_x$ refines the foliation $\mathscr{S}^s_y$, and a fortiori also $\mathscr{L}^s_y$. Moreover, the foliations $\mathscr{L}^s_x$ and $\mathscr{S}^s_x$ are $\mathscr{L}^s_y$-smooth.
\item Any continuously differentiable curve that is contained in the domain of the foliation $\mathscr{L}^s_x$ or $\mathscr{S}^s_x$ and is tangent to every leaf of this foliation, is fully contained in one leaf.
\end{enumerate}
\end{mainthm}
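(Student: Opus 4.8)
The plan is to construct the whole system by induction on the Morse index, reducing everything to a local problem near a single stationary point. Enumerate the stationary points as $x_1,\dots,x_N$ with $\mathrm{ind}(x_1)\le\dots\le\mathrm{ind}(x_N)$ and suppose $\mathscr L^s_{x_j}$ and $\mathscr S^s_{x_j}$ have already been built, on suitably thin invariant neighbourhoods $U_{x_j}$, so that (i)--(iii) hold among them. The observation that makes the induction run is that all foliations in play are $\phi$-invariant, and so is the refinement relation between two invariant foliations: if a leaf of one is contained in a leaf of the other, the same holds after applying $\phi^t$. Hence it suffices to arrange all the required properties for $x_i$ (index $k$) on an arbitrarily small neighbourhood $V_{x_i}$ of $x_i$ and then spread them over $U_{x_i}:=\bigcup_{t\in\R}\phi^t(V_{x_i})$ by the flow; this $U_{x_i}$ is automatically an open invariant neighbourhood of $W^u(x_i)\cup W^s(x_i)$, and $\mathscr L^s_{x_i}$-smoothness and the transversality clauses likewise reduce to germs at $x_i$, because unstable manifolds and the domains $U_{x_j}$ are themselves invariant.

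For the unconstrained local construction at $x_i$ one uses that $W^u_{\mathrm{loc}}(x_i)$ is an overflowing invariant manifold along which $\phi^t$, $t>0$, uniformly contracts the complementary stable directions; the $C^r$-section theorem of Hirsch--Pugh--Shub then produces the stable foliation of $W^u_{\mathrm{loc}}(x_i)$, an invariant partially smooth foliation of a neighbourhood $V_{x_i}$ with smooth $(n-k)$-dimensional leaves transverse to $W^u_{\mathrm{loc}}(x_i)$ and central leaf $W^s_{\mathrm{loc}}(x_i)$. This is the germ of $\mathscr L^s_{x_i}$. Refining it, one obtains the germ of $\mathscr S^s_{x_i}$ by foliating each punctured leaf $L^s_{x_i}(p)\setminus\{p\}$ by the level sets of the first-hitting time of a fixed small sphere bundle around $W^u_{\mathrm{loc}}(x_i)$ transverse to the stable directions; these level sets are $(n-k-1)$-spheres, they are permuted by the flow (flowing shifts the hitting time), and they vary continuously in the $C^\infty$ topology. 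Spreading both germs by $\phi$ over $U_{x_i}$ yields candidate foliations satisfying the invariance requirement and the dimension/sphere part of (i) by construction.

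The heart of the matter is the compatibility clause (ii): the germ of $\mathscr L^s_{x_i}$ at $x_i$ must be chosen subordinate to every previously built foliation whose domain reaches $V_{x_i}$, and $\mathscr L^s_{x_j}$-smooth. If $V_{x_i}$ is small enough, the only such older foliations are the $\mathscr L^s_{x_j},\mathscr S^s_{x_j}$ with a connecting orbit from $x_i$ to $x_j$ (finitely many, all of lower index), together with the equal-index ones, which one disposes of first: for $x_i\ne x_j$ with $\mathrm{ind}(x_j)=k$ the sets $W^u(x_i)\cup W^s(x_i)$ and $W^u(x_j)\cup W^s(x_j)$ are disjoint and, using the $\lambda$-lemma to see how their closures approach common higher-index stable manifolds, can be separated by disjoint invariant open sets, so $U_{x_i}\cap U_{x_j}=\emptyset$ and (ii) is vacuous there. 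For the remaining $x_j$ one notes that near $x_i$ their stable manifolds $W^s(x_j)$ accumulate onto $W^s(x_i)$ ``from the $W^u(x_i)$-side'' (again the $\lambda$-lemma), that by the inductive compatibility the older foliations restrict near $x_i$ to a nested flag of invariant partially smooth foliations, and that the leaves of this flag are uniformly transverse to $W^u_{\mathrm{loc}}(x_i)$. One then runs the section theorem of the previous paragraph \emph{relative to this flag}: the graph transform is performed inside the closed class of local sections whose leaves are tangent to the finest older leaf through each point, a class that is preserved by the transform precisely because those foliations are $\phi$-invariant; its fixed point is $\mathscr L^s_{x_i}$, automatically subordinate to the whole flag, and $\mathscr L^s_{x_j}$-smoothness is read off in coordinates flattening $\mathscr L^s_{x_j}$. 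Since subordinacy is preserved by $\phi$ and every leaf of $\mathscr L^s_{x_i}$ meeting $U_{x_j}$ is the flow-image of a local leaf meeting $V_{x_i}$, this propagates to all of $U_{x_i}$, giving (ii); running the same relative construction for the hitting-time foliation secures the transversality statement in (i).

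Finally, (iii) is local: around any point of the domain there is a foliation chart in which the leaves are the horizontal slices and the plaques depend smoothly along the leaf directions (partial smoothness), so the tangent distribution to the foliation is exactly the horizontal one; a $C^1$ curve tangent to it has locally constant transverse coordinate, hence lies in one plaque, and connectedness of the curve finishes the argument. The genuine obstacle in this scheme is the compatibility step of the third paragraph: making the local stable foliation at $x_i$ subordinate to all the previously constructed foliations while keeping the partially smooth regularity, which is what forces the invariant section theorem to be carried out inside the constrained class and forces a careful $\lambda$-lemma analysis of the older foliations in a full neighbourhood of $x_i$.
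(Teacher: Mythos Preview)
Your inductive scheme and the idea of flowing a local construction out to a full invariant neighbourhood match the paper's approach. For (ii) you propose running the graph transform inside a constrained class of sections tangent to the older foliations; the paper takes a different route here: it builds smooth vector fields transverse to $W^u(x_i)$, projects them onto the tangent spaces of the older $\mathscr S^s_y$ using a specially adapted Riemannian metric (Lemma~\ref{lemmulo}) and a partition of unity respecting the index filtration, and then integrates the resulting $\mathscr L^s_y$-smooth vector fields to produce the leaves of $\mathscr L^s_{x_i}$. The graph transform (Proposition~\ref{graph-transform}) only enters at the end, to show that the resulting foliation of $\tilde U_{x_i}$ extends partially smoothly across $W^u(x_i)$ (Proposition~\ref{stafol-prop}). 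Your alternative could be made to work, but note that ``the finest older leaf through a point'' jumps as the point crosses between the various $U_{x_j}$, so the constrained class you describe is not literally closed; the paper's partition of unity is precisely what negotiates this.

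The real gap is your argument for (iii). You treat it as a local fact about any partially smooth foliation: in a foliation chart the leaves are horizontal slices, the tangent distribution is the horizontal one, and a $C^1$ curve tangent to it has locally constant transverse coordinate. This is false. The chart $\varphi$ is only a homeomorphism, so $\varphi^{-1}\circ\gamma$ need not be differentiable and you cannot conclude that its transverse component is constant. The paper gives explicit counterexamples (Examples~\ref{cubic} and~\ref{osculating}): in the first, the $x$-axis is a smooth curve tangent to every leaf of a partially smooth foliation of $\R^2$ yet contained in none of them. Property~(iii) is therefore \emph{not} a general feature of partially smooth foliations; it is a special property of the foliations built here, and its proof (Proposition~\ref{uniqueness}) uses the compatibility from (ii) in an essential way. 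Since $\mathscr L^s_x$ is $\mathscr L^s_y$-smooth for every $y$ of index at most $\mathrm{ind}(x)$, its restriction to each stable manifold $W^s(y)$ is a genuinely smooth foliation. A $C^1$ curve tangent to $\mathscr L^s_x$ meets some stratum $W^s_h$ of maximal dimension on a nonempty open subinterval $J$; on $J$ the curve lies in a single $W^s(y)$, where classical uniqueness for smooth foliations traps it in one leaf $L^s_x(p)$; relative closedness of $L^s_x(p)$ in $U_x$ then forces $J$ to have no boundary in the domain, hence to be the whole interval.
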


This result is a refinement of a theorem that Palis and Smale proved in \cite{ps70}[Theorem 2.6] and that is the main ingredient in their proof of the structural stability of Morse-Smale diffeomorphisms and flows. After Palis and Smale's work, different proofs of the structural stability of more general flows were given, such as Robinson's proof of the structural stability of Axiom A flows \cite{rob74}, but systems of compatible stable and unstable foliations similar to the ones appearing in the above result have been often used in the study of Morse-Smale diffeomorphisms and flows (see e.g.\ \cite{pt83} and \cite{bgp17}).

The main additions in our statement are the presence of the further foliations into spheres $\mathscr{S}^s_x$ and a higher regularity of all the foliations. Indeed, Palis and Smale required only smooth leaves with tangent spaces varying continuously. Instead, we are requiring the already mentioned stronger partial smoothness plus the further conditions required in (ii) and (iii), which still need some explanation. Concerning (ii), the precise meaning of $\mathscr{L}'$-smoothness for a foliation $\mathscr{L}$ refining $\mathscr{L}'$ is discussed in Section \ref{admsec} below. Here we just note that it implies in particular that $\mathscr{L}$ restricts to a smooth foliation - in the usual sense - of every leaf of $\mathscr{L}'$. Condition (iii), which would hold automatically for a $C^1$ foliation, does not hold in general for a partially smooth one (see Examples \ref{cubic} and \ref{osculating} below), but turns out to be quite useful. The invariant stable foliations of Theorem \ref{introthm2} might have other applications, for instance to the already mentioned approach  to Morse theory via currents, see \cite{hl01}, and to the spectral analysis of Morse-Smale gradient flows, see \cite{dr19}, which so far has been carried out under the assumption of $C^1$-linearizability near stationary points.

Giving a  complete and self-contained proof of Theorem \ref{introthm2} will take the first five sections of this paper. This proof differs from Palis and Smale's one, as our foliations are obtained by integrating vector fields rather than by constructing retractions. The class of foliations we deal with is discussed in Section \ref{admsec}, and Theorem \ref{introthm2} is proven in Sections \ref{folsec} and \ref{refsec}, building on a general statement about the regularity of the graph transform in hyperbolic dynamics that we prove in Section \ref{gtsec}. The latter statement has some consequences which might have some independent interest, such as the behaviour of the tangent spaces of $W^u(x)$ along sequences of points in $W^u(x)$ converging to some point outside of $W^u(x)$, that are discussed in Section \ref{gensec}.

Building on Theorem \ref{introthm2}, a flow $\psi$ on $W^u(x)$ satisfying the conditions (a)-(d) stated above can be constructed quite easily. Indeed, $W^u(x)\setminus \{x\}$ is covered by the union of all the invariant neighborhoods $U_y$ for $\mathrm{ind}(y) < \mathrm{ind}(x)$. For every such stationary point $y$, the traces of the foliations $\mathscr{L}^s_y$ and $\mathscr{S}^s_y$ on $W^u(x)$ define partially smooth foliations of $U_y \cap W^u(x)$, thanks to the transversality condition stated in Theorem \ref{introthm2} (i). We can define a vector field $Y_y$ on $U_y \cap W^u(x)$ which is tangent to the leaves of $\mathscr{L}^s_y\cap W^u(x)$, tranverse to the leaves $\mathscr{S}^s_y\cap W^u(x)$ and points in the direction in which the radii of the spheres forming the foliation $\mathscr{S}^s_y$ decrease. The flow of the vector field $Y_y$ is positively complete and the orbit of any point in the leaf $L^s_y(p)\cap W^u(x)$ converges to the point $p\in W^u(y)$ for $t\rightarrow +\infty$. 

By patching together the vector fields $Y_y$ using a suitable partition of unity, we can define a vector field $Y$ on $W^u(x)\setminus \{x\}$ whose flow has the property that the orbit of each $p$ near the ``boundary'' $\overline{W^u(x)} \setminus W^u(x)$ converges to some point $\omega(p)$ in $\overline{W^u(x)} \setminus W^u(x)$, where $\omega$ is a continuous map. Here, the partition of unity is chosen in such a way that in the regions in which more vector fields $Y_y$ are present, those corresponding to stationary points of higher Morse index are privileged. The refinement property of the above foliations plays an important role in proving the continuity of $\omega$. See Section \ref{nearbdrysec} for the explicit construction of $Y$ and for the discussion of the properties of its flow. It should also be remarked that the vector field $Y$ is continuous but may not be locally Lipschitz continuous. However, it is uniquely integrable because at every point $p$ it is tangent to the stable leaf of higher dimension containing $p$, on which it is smooth, and thanks to condition (iii) in Theorem \ref{introthm2}. 

The final step is to juxtapose the flow of $Y$, which has the required behaviour near the ``boundary'' of $W^u(x)$, to the restriction of the original flow $\phi$ to $W^u(x)$, which has the right behaviour near $x$. This is done in Section \ref{finalsec}, using some general facts about the juxtaposition of flows that are discussed in Appendix \ref{juxtsec}.

\paragraph{Acknowledgements.} We would like to thank Lizhen Qin for sharing with us his knowledge of the history of this problem  and for precious bibliographical informations. The research of A.\ Abbondandolo is supported by the DFG-Project 380257369 ``Morse theoretical methods in Hamiltonian dynamics''.
   
\tableofcontents   

\section{Partially smooth foliations}
\label{admsec}

In this section $M$ denotes a (not necessarily compact) smooth $n$-dimensional manifold without boundary. Our aim here is to discuss a class of foliations of $M$ that appear often in dynamics. These are not smooth foliations in the usual sense, but they have smooth leaves that vary continuously. Here is the precise definition. 

\begin{defn}
\label{admissible}
A $k$-dimensional partially smooth foliation of $M$ is a partition $\mathscr{L}$ of $M$ such that for every $p\in M$ there exists  a homeomorphism
\[
\varphi: \Omega \rightarrow U
\]
from an open subset $\Omega$ of $\R^k \times \R^{n-k}$ onto an open neighborhood $U$ of $p$ such that
\begin{enumerate}[(i)]
\item $\{ \varphi(\Omega \cap ( \R^k \times \{y\})) \}_{y\in \R^{n-k}} = \{ L \cap U\}_{L\in \mathscr{L}}$;
\item $\varphi$ is infinitely differentiable in the first variable $x\in \R^k$ and all the partial differentials $d_1^h \varphi$, $h\in \N$, are continuous on $\Omega$;
\item for every $(x,y)\in \Omega$ the partial differential $d_1 \varphi(x,y): \R^k \rightarrow T_{\varphi(x,y)} M$ is injective.
\end{enumerate}
A map $\varphi$ as above is called a partially smooth parametrization of $\mathscr{L}$ at $p$. The elements $L$ of a partially smooth foliation $\mathscr{L}$ are called leaves.
\end{defn}

We shall use the symbol $L(p)$ to denote the leaf $L\in \mathscr{L}$ through $p$. According to the above definition, each leaf $L\in \mathscr{L}$ is a smooth embedded submanifold of $M$ and the map 
\[
p \mapsto T_p L(p)
\]
is a continuous map from $M$ into the Grassmannian of $k$-planes in $TM$. This map is not necessarily differentiable in directions that are not tangent to the leaves. 

\begin{rem}
One could modify this definition in order to allow the leaves to be only injectively immersed submanifolds, but this further level of generality is not needed in this paper.
\end{rem}

\begin{ex}
\label{cubic}
The partition of $\R^2$ into the graphs of the smooth functions $x\mapsto (x-x_0)^3$, for $x_0$ varying in $\R$, is a 1-dimensional partially smooth foliation of $\R^2$. Indeed, it has the global partially smooth parametrization
\[
\varphi(x,y) = \bigl( x,(x+y)^3 \bigr).
\]
Note that this is not a $C^1$ foliation. In fact, the $x$-axis is a smooth curve which is tangent to each leaf and contained in none. The presence of such a curve is not possible in a $C^1$ foliation, but it is possible in a partially smooth foliation, since the associated parametrizations are not assumed to be $C^1$ diffeomorphisms.
\end{ex}

\begin{ex}
\label{osculating}
An example with a special geometric flavour is described in \cite[Lecture 10, see in particular figure 10.10]{ft07}: Consider a smooth simple curve $\gamma$ in the plane with strictly increasing positive curvature. Then the osculating circles to $\gamma$, that is, the circles which are tangent to $\gamma$ with radius the inverse of the curvature of $\gamma$ at the point of tangency, form a foliation of a portion of the plane. This foliation is easily seen to be partially smooth. It is not a $C^1$ foliation, because the curve $\gamma$ is tangent to all the leaves of this foliation but touches each leaf in a single point.
\end{ex}

If $\mathscr{L}$ is a $k$-dimensional partially smooth foliation of $M$ and $\varphi_1: \Omega_1 \rightarrow U_1$ and $\varphi_2: \Omega_1 \rightarrow U_2$ are two partially smooth parametrizations of $\mathscr{L}$ with $U_1\cap U_2 \neq \emptyset$ then the transition map
\[
\psi:= \varphi_2^{-1} \circ \varphi_1 : \varphi_1^{-1}(U_2) \rightarrow \varphi_2^{-1}(U_1)
\]
is a homeomorphism between open subsets of $\R^k \times \R^{n-k}$ that maps subspaces parallel to $\R^k \times \{0\}$ into subspaces parallel to $\R^k \times \{0\}$ and hence has the form
\[
\psi(x,y) = \bigl( \psi_1(x,y), \psi_2(y) \bigr).
\]
Here, the first component $\psi_1$ has differentials of every order in the first variable $x\in \R^k$ that are continuous on $\varphi_1^{-1}(U_2)$, and
\[
d_1 \psi_1(x,y): \R^k \rightarrow \R^k
\]
is invertible for every $(x,y)\in \varphi_1^{-1}(U_2)$.

The next example gives an alternative way of presenting a partially smooth foliation.

\begin{ex}
\label{repgrafici}
Consider a family of graphs of maps from $\R^k$ into $\R^{n-k}$:
\[
\mathscr{L} = \{ \mathrm{graph} \, \sigma_y : \R^k \rightarrow \R^{n-k} \}_{y\in \R^{n-k}} 
\]
where 
\[
\sigma: \R^k \times \R^{n-k} \rightarrow \R^{n-k},
\]
and we are using the notation $\sigma_y(x)=\sigma(x,y)$. We assume that the elements of $\mathscr{L}$ form a partition of $\R^k \times \R^{n-k}$. If the map $\sigma$ is continuous on $\R^k \times \R^{n-k}$ and infinitely differentiable with respect to the first variable $x\in \R^k$ with all differentials $d^h_1\sigma$ continuous on $\R^k \times \R^{n-k}$, then $\mathscr{L}$ is a partially smooth foliation of $\R^k \times \R^{n-k}$. The reader is invited to check that every partially smooth foliation can be locally represented in this way by means of smooth charts.
\end{ex}

We now proceed by defining the natural notion of smoothness for maps whose domain carries a partially smooth foliation.

\begin{defn}
Let $\mathscr{L}$ be a partially smooth $k$-dimensional foliation of  $M$. A map $f: V \rightarrow N$ from an open subset $V$ of $M$ into a smooth manifold $N$ is said to be $\mathscr{L}$-smooth  if for any partially smooth parametrization $\varphi: \Omega \rightarrow U\subset V$ of $\mathscr{L}$ the composition $f\circ \varphi$ is continuous, infinitely differentiable in the first variable $x\in \R^k$ and all the partial differentials $d_1^h f$, $h\in \N$, are continuous on $\Omega$.  
\end{defn}

As usual, it is enough to check the $\mathscr{L}$-smoothness of a map $f: V \rightarrow N$ for a family of partially smooth parametrizations whose images cover $V$. Indeed, this follows from the properties of the transition maps that we have discussed before. 

If, more generally, the partially smooth foliation $\mathscr{L}$ is defined only on an open subset $W$ of $M$, we say that $f: V \rightarrow N$ as above is $\mathscr{L}$-smooth if its restriction to $V\cap W$ is $\mathscr{L}$-smooth.

The restriction of an $\mathscr{L}$-smooth map to each leaf of $\mathscr{L}$ is smooth, but derivatives in directions that are not tangent to the leaves may not exist.
 
Let $\mathscr{L}$ be a partially smooth foliation.  The inverse of each partially smooth parametrization for $\mathscr{L}$ is trivially seen to be $\mathscr{L}$-smooth. Moreover, the map
 \[
 p \mapsto T_p L(p)
\]
that maps each $p\in M$ to the tangent space of the leaf through $p$ is also $\mathscr{L}$-smooth. Indeed, its composition with a partially smooth parametrization $\varphi$ is just the map
\[
(x,y) \mapsto d_1 \varphi(x,y) (\R^k),
\]
which has the required regularity properties. 

Let $\mathscr{L}$ be a $k$-dimensional partially smooth foliation of the $n$-dimensional manifold $M$ and let $\mathscr{L}'$ be a $k'$-dimensional partially smooth foliation of the $n'$-dimensional manifold $M'$. Let $f: M \rightarrow M'$ be an $\mathscr{L}$-smooth map mapping each leaf of $\mathscr{L}$ into some leaf of $\mathscr{L}'$. Then for every $\mathscr{L}'$-smooth map $g: M' \rightarrow N$ into a manifold $N$ the composition $g\circ f: M \rightarrow N$ is $\mathscr{L}$-smooth. Indeed, $f$ can be read by means of partially smooth parametrizations of $\mathscr{L}$ and $\mathscr{L}'$ as a map
\[
(x,y) \mapsto \psi^{-1}\circ f \circ \varphi (x,y) = (f_1(x,y),f_2(y)), \qquad \forall (x,y)\in \Omega\subset \R^k \times \R^{n-k},
\]
where the continuous map $f_1$ takes values into $\R^{k'}$ and is infinitely differentiable in $x\in \R^k$ with continuous partial differentials of every order and $f_2$ is a continuous map taking values into $\R^{n'-k'}$. By means of the same partially smooth parametrization of $\mathscr{L}'$, the map $g$ is seen as a continuous map
\[
(x',y') \mapsto g\circ \psi(x',y'), \qquad \forall (x',y')\in \Omega' \subset \R^{k'} \times \R^{n'-k'},
\]
which is infinitely differentiable in $x'\in \R^{k'}$ with continuous partial differentials of every order. The chain rule implies that $g\circ f \circ \varphi$ has the required regularity properties.

\begin{defn}
Let $\mathscr{L}$ and $\mathscr{L}'$ be partially smooth foliations of open subsets $U$ and $U'$ of $M$. We say that $\mathscr{L}$ refines $\mathscr{L}'$ if for all $p\in U\cap U'$ the leaf $L(p)$ of $\mathscr{L}$ passing through $p$ is contained in the leaf $L'(p)$ of $\mathscr{L}'$ through $p$.
\end{defn}

In other words, $\mathscr{L}$ refines $\mathscr{L}'$ if any leaf of $\mathscr{L}$ is either disjoint from the domain of $\mathscr{L}'$ or fully contained in a leaf of $\mathscr{L}'$. 

Let $\mathscr{L}$ and $\mathscr{L}'$ be partially smooth foliations of open subsets $U$ and $U'$ of $M$ and assume that $\mathscr{L}$ refines $\mathscr{L}'$. In this case, any map $f: V \rightarrow N$ from an open subset $V$ of $M$ into a manifold $N$ that is $\mathscr{L}'$-smooth is a fortiori $\mathscr{L}$-smooth on $U'$: This follows from the above considerations about smoothness of compositions by factorizing the restriction of $f$ to $V\cap U\cap U'$ through the inclusion $V\cap U\cap U' \hookrightarrow V\cap U'$.

Let $\varphi$ and $\varphi'$ be partially smooth parametrizations  of $\mathscr{L}$ and $\mathscr{L}'$ respectively.  Then the inverse of $\varphi'$ is $\mathscr{L}$-smooth, because we have already observed that it is $\mathscr{L}'$-smooth. However, the map $\varphi^{-1}$ needs not be $\mathscr{L}'$-smooth: For instance, if $\mathscr{L}'=\{U'\}$ is the trivial $n$-dimensional foliation consisting of only one leaf, then $\mathscr{L}'$-smoothness is just ordinary smoothness, and the inverses of partially smooth parametrizations of $\mathscr{L}$ need not be smooth. These considerations motivate the following defintion:

\begin{defn}
Let $\mathscr{L}$ and $\mathscr{L}'$ be partially smooth foliations of open subsets $U$ and $U'$ of $M$ and assume that $\mathscr{L}$ refines $\mathscr{L}'$. The foliation $\mathscr{L}$ is said to be $\mathscr{L}'$-smooth if $U\cap U'$ can be covered by a family of open sets $V$ that are images of partially smooth parametrizations $\varphi: \Omega\rightarrow V$ for $\mathscr{L}$ such that 
 the map $\varphi^{-1}$ is $\mathscr{L}'$-smooth.
\end{defn}

Equivalently: the partially smooth foliation $\mathscr{L}$ refining $\mathscr{L}'$ is $\mathscr{L}'$-smooth if and only if the map 
\[
p \mapsto T_p L(p)
\]
is $\mathscr{L}$'-smooth. 
Indeed, if $\mathscr{L}$ is $\mathscr{L}'$-smooth and $\varphi: \Omega \rightarrow U$ is a partially smooth parametrization of $\mathscr{L}$ whose inverse is $\mathscr{L}'$-smooth, then the composition of the above map with a partially smooth parametrization of $\mathscr{L}'$ is the map
\[
(x,y) \mapsto d_1 \varphi (\varphi^{-1}\circ \psi(x,y)) (\R^h), \qquad (x,y)\in \R^k \times \R^{n-k}, \; h= \dim \mathscr{L},
\]
which has the required regularity properties. The converse statement can be proven by locally representing the foliation $\mathscr{L}$ as a family of graphs as in Example \ref{repgrafici}.

Now let $\mathscr{L}$ be a $k$-dimensional partially smooth foliation of $M$ and let $v:M \rightarrow TM$ be an $\mathscr{L}$-smooth vector field on $M$ that is tangent to each leaf of $\mathscr{L}$. 

This continuous vector field is not necessarily $C^1$ and its Cauchy problems may not have a unique solution. For instance, if $\mathscr{L}$ is the partially smooth foliation of $\R^2$ that is described in Example \ref{cubic} we can consider $v$ to be the unit vector field on $\R^2$ that is tangent to all the leaves and has a positive first component. The corresponding Cauchy problem at points of the form $(x,0)$ does not have a unique solution.

However, the restriction of $v$ to each leaf of $\mathscr{L}$ is a smooth tangent vector field, and hence the Cauchy problems for $v$ have unique solutions that are contained in some leaf. Therefore, the vector field $v$ has a well defined local flow 
\[
\phi_v : \mathrm{dom}(\phi_v) \rightarrow M, \qquad (t,x) \mapsto \phi_v^t(x) := \phi_v(t,x), \qquad \mathrm{dom}(\phi_v)  \subset \R\times M.
\]
The theorems on the continuous dependence of solutions of Cauchy problems depending on a parameter and on the differentiable dependence on the initial conditions imply that the maximal domain $\mathrm{dom}(\phi_v)$ is an open subset of $\R\times M$ and that the map $\phi_v$ is $\R\times \mathscr{L}$-smooth, where 
\[
\R\times \mathscr{L} := \{ \R \times L \}_{L\in \mathscr{L}}
\]
is a partially smooth foliation of $\R \times M$. 

\section{The graph transform}
\label{gtsec}

The graph transform in hyperbolic dynamics was originally introduced in order to prove the existence and the regularity properties of the local unstable manifold of a hyperbolic stationary point of a flow. It is also quite useful in order to understand the evolution of submanifolds that are transverse to the stable manifold of such a hyperbolic stationary point.
The aim of this section is to recall the definition and main properties of the graph transform and to establish its continuity in the $C^k$ topology.

Let $E$ be a finite dimensional real vector space. Let $\phi$ be a $C^1$ local flow on $E$ having zero as a hyperbolic stationary point. This means that the domain of $\phi$ is an open subset of $\R \times E$ containing $\R \times \{0\}$ and
\[
\phi^t(0) = 0, \qquad d\phi^t(0) = e^{tL} \qquad \forall t\in \R,
\]
where $L:E\rightarrow E$ is a linear mapping and $E$ has an $L$-invariant splitting
\[
E = E^u \oplus E^s
\]
such that
\[
\max \re \sigma(L|_{E^s}) <  0 < \min \re \sigma(L|_{E^u}).
\]
Denote by $P^u$ and $P^s$ the projectors onto $E^u$ and $E^s$ which are induced by this splitting.
As it is well known, $E$ admits an $L$-adapted norm, that is, a norm $\|\cdot\|$ such that
\[
\begin{split}
\|x\| &= \max\{\|P^u x\|,\|P^s x\|\}, \\ \|e^{tL} x\| \leq e^{-\lambda t} \|x\| \; \forall x\in E^s, \; \forall t\geq 0, & \qquad \|e^{tL} x\| \leq e^{\lambda t} \|x\| \; \forall x\in E^u, \; \forall t\leq 0,
\end{split}
\]
for some positive real number $\lambda$. Given a number $r>0$, we denote by $D(r)$ (resp.\ $D^u(r)$, resp.\ $D^s(r)$) the closed ball of radius $r$ in $E$ (resp.\ in $E^u$, resp.\ in $E^s$). With this notation we have
\[
D(r) = D^u(r) \times D^s(r).
\]
The local stable and unstable manifolds in $D(r)$ are the sets
\[
\begin{split}
W^s_{\mathrm{loc},r}(0) &:= \set{x\in D(r)}{\phi^t(x) \in D(r) \; \forall t\geq 0 \mbox{ and } \phi^t(x) \rightarrow 0 \mbox{ for } t\rightarrow +\infty}, \\
W^u_{\mathrm{loc},r}(0) &:= \set{x\in D(r)}{\phi^t(x) \in D(r) \; \forall t\leq 0 \mbox{ and } \phi^t(x) \rightarrow 0 \mbox{ for } t\rightarrow -\infty}.
\end{split}
\]
The symbol $\mathrm{Lip}_1( D^u(r), D^s(r))$ denotes the set of 1-Lipschitz maps from $D^u(r)$ to $D^s(r)$, which is a closed convex subset of the Banach space of continuous mappings from $D^u(r)$ to $E^s$. The map $\Gamma_{\phi}$ whose properties are listed in the next proposition is called the graph transform. 

\begin{prop}
\label{graph-transform}
For any $r>0$ small enough there is a continuous map
\[
\Gamma_{\phi} : [0,+\infty] \times \mathrm{Lip}_1( D^u(r), D^s(r)) \longrightarrow \mathrm{Lip}_1( D^u(r), D^s(r))
\]
with the following properties:
\begin{enumerate}[(i)]
\item $\Gamma_{\phi}$ is a semigroup, that is, $\Gamma_{\phi}(0,\sigma)=\sigma$ and $\Gamma_{\phi}(t+s,\sigma) = \Gamma(t,\Gamma(s,\sigma))$ for every $\sigma\in \mathrm{Lip}_1( D^u(r), D^s(r))$ and every $s,t\in [0,+\infty]$.
\item For every $t\in [0,+\infty)$ the restriction of $\phi^t$ to $D(r)$ maps the graph of $\sigma$ onto the graph of $\Gamma_{\phi}(t,\sigma)$, that is
\[
\graf \Gamma_{\phi}(t,\sigma) = \set{ \phi^t(x) }{x\in \graf \sigma \mbox{ and } \phi([0,t]\times \{x\}) \subset D(r)}.
\]
\item $\Gamma_{\phi}$ has a unique fixed point $\sigma_{\infty}$ such that $\Gamma_{\phi}(+\infty,\sigma) = \sigma_{\infty}$ for every 
\[
\sigma\in \mathrm{Lip}_1( D^u(r), D^s(r)),
\]
and 
\[
\graf \sigma_{\infty} = W^u_{\mathrm{loc},r}(0).
\]
\item If morever $\phi$ is of class $C^{k+1}$, with $k\geq 1$, then $\Gamma_{\phi}$ restricts to a mapping 
\[
[0,+\infty] \times \mathrm{Lip}_1\cap C^k( D^u(r), D^s(r)) \longrightarrow \mathrm{Lip}_1\cap C^k( D^u(r), D^s(r)),
\]
which is continuous with respect to the $C^k$ topology. In particular, $\sigma_{\infty}$ is of class $C^k$ and if $\sigma$ is of class $C^k$ then $\Gamma_{\phi}(t,\sigma)$ converges to $\sigma_{\infty}$ in the $C^k$-topology for $t\rightarrow +\infty$.
\end{enumerate}
\end{prop}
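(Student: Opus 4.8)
\emph{Plan of proof.} The plan is to follow the classical Hadamard graph–transform construction, organised so that the $C^k$ statement (iv) comes out of the Hirsch--Pugh--Shub fibre contraction theorem. I would work in the splitting coordinates $x=(a,b)\in E^u\times E^s$, in which $\phi^t(a,b)=(u^t(a,b),s^t(a,b))$ and $e^{tL}$ is block diagonal. The first step is purely local: since $\phi$ is $C^1$ with $d\phi^t(0)=e^{tL}$ and $d\phi^t$ continuous, given any $\delta>0$ there are $r_0>0$ and $t_0>0$ such that $d\phi^t(x)$ is $\delta$-close to $e^{tL}$ whenever $r\le r_0$, $0\le t\le t_0$, $x\in D(r)$. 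Hence $d\phi^t(x)$ maps the unstable cone $C:=\{(v^u,v^s):\|v^s\|\le\|v^u\|\}$ into its interior, expands the $E^u$-component of vectors of $C$ by a factor $\mu_t>1$, and satisfies $\|P^s d\phi^t(x)v\|\le e^{-\lambda t}\|v^s\|+\delta\|v^u\|$ for $v\in C$. Integrating these infinitesimal inequalities along graphs of $1$-Lipschitz maps yields the finite cone condition (the $\phi^t$-image of such a graph still has $E^s$-distances bounded by $E^u$-distances, the latter expanded by $\ge\mu_t$) together with an overflowing property: $\phi^t$ carries the portion of $\graf\sigma$ over $D^u(r)$ whose forward orbit up to time $t$ stays in $D(r)$ onto a set whose $E^u$-projection covers $D^u(r)$.

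Granting this, I would define, for $t\in[0,t_0]$ and $\sigma\in\mathrm{Lip}_1(D^u(r),D^s(r))$ and each $a'\in D^u(r)$, the unique $a=a(t,\sigma,a')\in D^u(r)$ with $\phi([0,t]\times\{(a,\sigma(a))\})\subset D(r)$ and $P^u\phi^t(a,\sigma(a))=a'$ (existence and uniqueness from the overflowing property and the expansion $\mu_t>1$), and set $\Gamma_\phi(t,\sigma)(a'):=P^s\phi^t(a,\sigma(a))$. The finite cone condition makes $\Gamma_\phi(t,\sigma)$ a $1$-Lipschitz map into $D^s(r)$, and rereading the definition gives the graph characterisation (ii); the same inequalities give $d_{C^0}(\Gamma_\phi(t,\sigma),\Gamma_\phi(t,\sigma'))\le\nu_t\,d_{C^0}(\sigma,\sigma')$ with $\nu_t<1$ for $t\in(0,t_0]$, and continuous dependence of $\phi$ and of the implicit relation defining $a(t,\sigma,a')$ gives continuity of $(t,\sigma)\mapsto\Gamma_\phi(t,\sigma)$ into $C^0$. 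I would extend $\Gamma_\phi$ to $[0,+\infty)$ by $\Gamma_\phi(nt_0+s,\sigma):=\Gamma_\phi\bigl(s,\Gamma_\phi(t_0,\cdot)^{\circ n}\sigma\bigr)$ for $s\in[0,t_0)$; using $\phi^{t+s}=\phi^t\circ\phi^s$ this is independent of the splitting of $t$, is a semigroup (that is (i)), still satisfies (ii), and is continuous by continuity of composition. Finally $\Gamma_\phi(+\infty,\sigma):=\sigma_\infty$, the Banach fixed point of the contraction $\Gamma_\phi(t_0,\cdot)$ of the complete space $\mathrm{Lip}_1(D^u(r),D^s(r))$; since $\Gamma_\phi(t,\cdot)$ commutes with $\Gamma_\phi(t_0,\cdot)$ it is the common fixed point of every $\Gamma_\phi(t,\cdot)$ and attracts every $\sigma$, and the usual invariance argument (the fixed graph is $\phi^{-t}$-invariant inside $D(r)$, hence inside $W^u_{\mathrm{loc},r}(0)$, while $W^u_{\mathrm{loc},r}(0)$ is itself the graph of a $1$-Lipschitz map by the unstable-cone expansion and is invariant) gives $\graf\sigma_\infty=W^u_{\mathrm{loc},r}(0)$. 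This proves (i)--(iii).

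For (iv), which is the main point, I would differentiate the relations $P^u\phi^t(a,\sigma(a))=a'$ and $\Gamma_\phi(t,\sigma)(a')=P^s\phi^t(a,\sigma(a))$ by the chain rule: if $\phi\in C^{k+1}$ and $\sigma\in C^j$ with $1\le j\le k$, this shows $\Gamma_\phi(t,\sigma)\in C^j$ and exhibits each $D^i\Gamma_\phi(t,\sigma)$, $i\le j$, as a universal expression — built by composition, product and inversion of linear maps (a Fa\`a-di-Bruno formula combined with the graph transform of the linearised flow $d\phi^t$) — in the derivatives $D^m\phi^t$ ($m\le i+1$) and $D^\ell\sigma$ ($\ell\le i$), with the crucial feature that its top-order term is the linear tangent graph transform of $D^j\sigma$ along the orbit plus a term involving only the derivatives $D^{<j}\sigma$. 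Two consequences: (a) for fixed $t$ these expressions are continuous in the $C^0$-topology of all the derivatives that appear, so $(t,\sigma)\mapsto\Gamma_\phi(t,\sigma)$ is continuous $[0,t_0]\times(\mathrm{Lip}_1\cap C^k)\to\mathrm{Lip}_1\cap C^k$, and then on all of $[0,+\infty)\times(\mathrm{Lip}_1\cap C^k)$ by the semigroup formula and continuity of composition in $C^k$; (b) the linear tangent graph transform contracts the sup-norm of $j$-th derivatives by a factor $\le e^{-(j+1)\lambda t_0}(1+o(1))<1$. Hence the map induced by $\Gamma_\phi(t_0,\cdot)$ on jets $(\sigma,D\sigma,\dots,D^k\sigma)$, living in $\mathrm{Lip}_1\times\prod_{j=1}^{k}\{\text{bounded }C^0\text{-sections}\}$ with the bounds obtained inductively from the $1$-Lipschitz condition, is — over the base contraction already found on $\mathrm{Lip}_1$ — a fibre contraction. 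The fibre contraction theorem then furnishes a globally attracting fixed point of the lifted transform; an induction on $j$ identifies its $j$-th component with $D^j\sigma_\infty$, so $\sigma_\infty\in C^k$; and ``globally attracting'' means exactly that for every $C^k$ datum $\sigma$ the derivatives $D^j\Gamma_\phi(nt_0,\sigma)$ converge uniformly to $D^j\sigma_\infty$, i.e.\ $\Gamma_\phi(t,\sigma)\to\sigma_\infty$ in $C^k$ as $t\to+\infty$; uniformity of this convergence over bounded sets yields continuity of $\Gamma_\phi$ at the points $(+\infty,\sigma)$, completing (iv).

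The routine parts are the local estimates and the $C^0$ theory. The hard part is the last paragraph, and within it two points deserve care: organising the chain-rule computation so that $D^j\Gamma_\phi(t,\sigma)$ is visibly ``a contracting linear transform of $D^j\sigma$ plus a function of lower-order derivatives'' — which is what makes the fibre-contraction induction run and which is slightly delicate because the implicitly defined map $a\mapsto a(t,\sigma,a')$ must also be differentiated and contributes lower-order terms — and checking that the unavoidable ``loss of one derivative'' in the evaluation and composition maps is harmless, which it is precisely because the fibre contraction theorem requires only continuity (not differentiability) of the fibre and base maps and because $\phi\in C^{k+1}$ makes every $D^m\phi^t$ with $m\le k+1$ continuous. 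The shrinking of domains under $\phi^t$ upon iteration is neutralised by taking $r$ small and using the overflowing property, which makes the domain of every $\Gamma_\phi(t,\sigma)$ equal to $D^u(r)$.
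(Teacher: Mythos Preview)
Your proposal is correct and complete. For (i)--(iii) you give a fuller sketch than the paper, which simply cites Shub's book for the discrete case and \cite{ama01} for the time dependence. The substantive difference is in (iv).

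For (iv) the paper takes a different route from your fibre-contraction argument. Rather than differentiating the implicit relation and tracking jets through a Fa\`a-di-Bruno computation, the paper observes that the tangent flow $T\phi^t(x,u)=(\phi^t(x),d\phi^t(x)[u])$ is itself a $C^k$ flow on $TE$ with hyperbolic fixed point $(0,0)$ and projectors $P^u\times P^u$, $P^s\times P^s$; one then checks the functorial identity $T\Gamma_\phi(t,\sigma)=\Gamma_{T\phi}(t,T\sigma)$ by applying the tangent functor to the defining relation of $\Gamma_\phi$. Since the set $\{T\sigma:\sigma\in\mathrm{Lip}_1\cap C^1\}$ is closed in $\mathrm{Lip}_1(D^u\times D^u,D^s\times D^s)$ and $\Gamma_{T\phi}$-invariant, the $C^0$ fixed-point statement (iii) applied to $T\phi$ forces the fixed point of $\Gamma_{T\phi}$ to be $T\sigma_\infty$, giving $\sigma_\infty\in C^1$ and $C^1$-convergence directly from $C^0$-convergence for the lifted flow. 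Iterating on $T^j\phi$ gives the $C^k$ statement. A short dynamical continuation argument then carries the result from a possibly smaller $r'$ back to $r$.

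The two approaches are dual in spirit: yours stays at the level of $\phi$ and lifts the \emph{map} $\Gamma_\phi$ to jets, invoking the fibre-contraction theorem; the paper lifts the \emph{flow} to $T^jE$ and reuses the $C^0$ statement already proved. The paper's method is shorter and makes the loss of one derivative transparent (since $T\phi$ is only $C^k$ when $\phi$ is $C^{k+1}$), avoiding the delicate bookkeeping you flag in your last paragraph. Your method is the classical Hirsch--Pugh--Shub route and is more self-contained; it also yields the locally uniform convergence in $\sigma$ without the separate continuation step from $r'$ to $r$ that the paper needs.
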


\begin{proof}
The proof of statements (i), (ii) and (iii) in the case of a discrete dynamical system can be found in \cite{shu87}[Section 5.I]. The case of a flow easily follows (see \cite{ama01}[Proposition A.3] for details about the dependence on time). Here we show how (iv) can be deduced from the first three statements applied to the tangential mapping of $\phi$. 

We start by recalling the explicit formula for graph transform $\Gamma_{\phi}(t,\sigma)$
\begin{equation}
\label{formula1}
\Gamma_{\phi}(t,\sigma)  = P^s \circ \phi^t \circ ( \mathrm{id}_{E^u} \times \sigma) \circ \bigl( P^u \circ \phi^t \circ ( \mathrm{id}_{E^u} \times \sigma) \bigr)^{-1}|_{D^u(r)} \qquad \forall t\in [0,+\infty).
\end{equation}
See \cite{shu87}[Definition 5.3]. Indeed, when $r$ is small enough the restriction to $D^u(r)$ of the map $P^u \circ \phi^t \circ ( \mathrm{id}_{E^u} \times \sigma)$ is a diffeomorphism onto a neighborhood of $D^u(r)$ (see \cite{shu87}[Lemma 5.5]). The identity (\ref{formula1}) implies that if $\phi$ is of class $C^k$ then $\Gamma_{\phi}$ restricts to a map
\[
[0,+\infty) \times \mathrm{Lip}_1\cap C^k( D^u(r), D^s(r)) \longrightarrow \mathrm{Lip}_1\cap C^k( D^u(r), D^s(r)),
\]
which is continuous with respect to the $C^k$ topology. There remains to show that 
if $\phi$ is of class $C^{k+1}$ then $\sigma_{\infty}$ is of class $C^k$ and for every $\sigma\in \mathrm{Lip}_1\cap C^k( D^u(r), D^s(r))$
\[
\Gamma_{\phi}(t,\sigma) \rightarrow \sigma_{\infty} \qquad \mbox{for } t\rightarrow +\infty
\]
in the $C^k$ topology, locally uniformly in $\sigma\in C^k( D^u(r), D^s(r))$. In order to show this, we start with the case $k=1$:

\medskip

\noindent{\em Claim 1.} If $\phi$ is of class $C^2$ then there is a positive number $r'\leq r$ such that $\sigma_{\infty}|_{D^u(r')}$ is of class $C^1$ and for every $\sigma\in \mathrm{Lip}_1\cap C^1( D^u(r'), D^s(r'))$
\[
\Gamma_{\phi}(t,\sigma) \rightarrow \sigma_{\infty}|_{D^u(r')} \qquad \mbox{for } t\rightarrow +\infty
\]
in the $C^1$ topology, locally uniformly in $\sigma\in C^1( D^u(r'), D^s(r'))$.

\medskip

The tangential mapping of $\phi^t$ is the map
\[
T\phi^t : TE:=E\times E \rightarrow TE, \qquad T\phi^t(x,u) := \bigl( \phi^t(x), d\phi^t(x)[u] \bigr),
\]
and defines a flow on $TE$. This flow is of class $C^k$ when $\phi$ is of class $C^{k+1}$. The point $(0,0)$ is an equilibrium point for this flow, and the differential of $T\phi^t$ at $(0,0)$ is
\[
d T\phi^t (0,0)[(u,v)] = \bigl(D\phi^t(0)[u], d^2 \phi^t(0)[0,u] + d\phi^t(0)[v]\bigr) = (e^{tL} u, e^{tL}v).
\] 
This shows that $(0,0)$ is hyperbolic, with corresponding projectors $P^u \times P^u$ and $P^s\times P^s$. Therefore, there exist a positive number $r'\leq r$ and a
continuous mapping
\[
\Gamma_{T\phi} : [0,+\infty] \times \mathrm{Lip}_1( D^u(r')\times D^u(r'), D^s(r')\times D^s(r')) \rightarrow \mathrm{Lip}_1( D^u(r')\times D^u(r'), D^s(r')\times D^s(r')),
\]
which satisfies the conditions (i), (ii) and (iii) for the flow $T\phi$. We claim that for every $\sigma\in \mathrm{Lip}_1\cap C^1( D^u(r'), D^s(r'))$ and every $t\in [0,+\infty)$ there holds\begin{equation}
\label{TGamma}
T \Gamma_{\phi}(t,\sigma) = \Gamma_{T\phi}(t,T\sigma).
\end{equation}
Indeed, the formula (\ref{formula1}) shows that the map $\Gamma_{\phi}(t,\sigma)$ is uniquely determined by the identity
\begin{equation}
\label{formula2}
\Gamma_{\phi}(t,\sigma) \circ P^u \circ \phi^t \circ ( \mathrm{id}_{E^u} \times \sigma) = P^s \circ \phi^t \circ ( \mathrm{id}_{E^u} \times \sigma).
\end{equation}
By applying the functor $T$ we find
\[
T\Gamma_{\phi}(t,\sigma) \circ TP^u \circ T\phi^t \circ T ( \mathrm{id}_{E^u} \times \sigma) = T P^s \circ T\phi^t \circ T ( \mathrm{id}_{E^u} \times \sigma),
\]
from which we obtain
\[
T\Gamma_{\phi}(t,\sigma) \circ (P^u\times P^u) \circ T\phi^t \circ ( \mathrm{id}_{E^u\times E^u} \times T\sigma) = (P^s\times P^s) \circ T\phi^t \circ ( \mathrm{id}_{E^u\times E^u} \times T \sigma).
\]
The last identity shows that $T\Gamma_{\phi}(t,\sigma)$ satisfies  (\ref{formula2}) for the flow $T\phi^t$ and hence proves (\ref{TGamma}). 

By the theorem on the limit under the sign of differential, the set
\[
\set{ T\sigma|_{D^u(r')\times D^u(r')} }{\sigma\in \mathrm{Lip}_1\cap C^1( D^u(r'), D^s(r'))}
\]
is a non-empty closed subset of $\mathrm{Lip}_1( D^u(r')\times D^u(r'), D^s(r')\times D^s(r'))$. Since this set is also invariant under the action of $\Gamma_{T\phi}$ by (\ref{TGamma}), statement (iii) implies that the unique fixed point of $\Gamma_{T\phi}$ belongs to it. Since the first component of $\Gamma_{T\phi}(t,T\sigma)$ converges uniformly to $\sigma_{\infty}|_{D^u(r')}$ for $t\rightarrow +\infty$, we deduce that $\sigma_{\infty}|_{D^u(r')}$ is continuously differentiable and $T\sigma_{\infty}|_{D^u(r')}$ is the fixed point of $\Gamma_{T\phi}$. The fact that $\Gamma_{T\phi}(t,T\sigma)$ $C^0$-converges to $ T\sigma_{\infty}|_{D^u(r')}$ for $t\rightarrow +\infty$ locally uniformly in $T\sigma\in C^0(D^u(r')\times D^u(r'), D^s(r')\times D^s(r'))$ implies Claim 1.

\medskip

By applying Claim 1 to $T\phi$, an induction argument proves the following:

\medskip

\noindent {\em Claim 2.} If $\phi$ is of class $C^{k+1}$ then there is a positive number $r'\leq r$ such that $\sigma_{\infty}|_{D^u(r')}$ is of class $C^k$ and for every $\sigma\in \mathrm{Lip}_1\cap C^k( D^u(r'), D^s(r'))$
\[
\Gamma_{\phi}(t,\sigma) \rightarrow \sigma_{\infty}|_{D^u(r')} \qquad \mbox{for } t\rightarrow +\infty
\]
in the $C^k$ topology, locally uniformly in $\sigma\in C^k( D^u(r'), D^s(r'))$.

\medskip

There remains to prove that the above claim remains true if we replace $r'$ by $r$. This follows from the identity (\ref{formula1}) together with a standard dynamical continuation argument. Indeed, since $\sigma_{\infty}$ is of class $C^k$ on $D^u(r')$ we can find $t_0>0$ large enough so that the map
\[
P^u \circ \phi^{t_0} \circ (\mathrm{id}_{E^u} \times \sigma_{\infty})|_{D^u(r')} : D^u(r') \rightarrow E^u
\]
is a diffeomorphism of class $C^k$ onto a neighborhood of $D^u(r)$ (see again \cite{shu87}[Lemma 5.5]). By (\ref{formula1}) together with the fact that $\sigma_{\infty}$ is a fixed point of $\Gamma_{\phi}$ we find
\[
\sigma_{\infty}  = P^s \circ \phi^{t_0} \circ ( \mathrm{id}_{E^u} \times \sigma_{\infty}) \circ \bigl( P^u \circ \phi^{t_0} \circ ( \mathrm{id}_{E^u} \times \sigma_{\infty}) \bigr)^{-1}|_{D^u(r)}.
\]
By the above choice of $t_0$, $\sigma_{\infty}$ on the right-hand side of this equality is applied at points in $D^u(r')$, so the regularity of $\sigma_{\infty}|_{D^u(r')}$ implies that $\sigma_{\infty}$ is of class $C^k$ on $D^u(r)$. Now let $\sigma_0 \in \mathrm{Lip}_1\cap C^k( D^u(r), D^s(r))$. By Claim 2 we can find a neighborhood $\mathscr{U}\subset \mathrm{Lip}_1\cap C^k( D^u(r), D^s(r))$ of $\sigma_0$ in the $C^k$ topology such that
\begin{equation}
\label{unif}
\Gamma_{\phi}(t,\sigma)|_{D^u(r')} \rightarrow \sigma_{\infty}|_{D^u(r')} \qquad \mbox{for } t\rightarrow +\infty \mbox{, uniformly in } \sigma\in \mathscr{U}.
\end{equation}
In particular, there exists $t_1\geq 0$ such that for every $t\geq t_1$ and every $\sigma\in \mathscr{U}$ the map
\[
P^u \circ \phi^{t_0} \circ (\mathrm{id}_{E^u} \times \Gamma_{\phi}(t-t_0,\sigma))|_{D^u(r')} : D^u(r') \rightarrow E^u
\]
is a diffeomorphism of class $C^k$ onto a neighborhood of $D^u(r)$. From the group property of $\Gamma_{\phi}$ we deduce the identity
\[
\Gamma_{\phi}(t,\sigma) = P^s \circ \phi^{t_0} \circ (\mathrm{id}_{E^u} \times \Gamma_{\phi}(t-t_0,\sigma) ) \circ \bigl( P^u \circ \phi^{t_0} \circ (\mathrm{id}_{E^u} \times \Gamma_{\phi}(t-t_0,\sigma)) \bigr)^{-1}|_{D^u(r)}.
\]
When $t\geq t_1$ and $\sigma\in \mathscr{U}$, the map $\Gamma_{\phi}(t-t_0,\sigma)$ on the right-hand side of this identity is applied at points in $D^u(r')$, so  (\ref{unif}) implies that $\Gamma_{\phi}(t,\sigma)$ converges to $\sigma_{\infty}$ in $C^k(D^u(r),D^s(r))$ uniformly for $\sigma\in \mathscr{U}$.

\end{proof}

\begin{rem}
Actually, $\sigma_{\infty}$ is of class $C^k$ whenever the flow is of class $C^k$. See \cite{shu87}[Section 5.II].
\end{rem}

\section{First properties of Morse-Smale gradient-like flows}
\label{gensec}

Let $M$ be a closed differentiable manifold and $\phi$ a smooth flow on $M$. The flow $\phi$ is said to be {\em gradient-like} if all its chain recurrent points are stationary points. Equivalently, $\phi$ admits a Lyapunov function, i.e. a continuous real valued function on $M$ which strictly decreases on all non-stationary orbits of $\phi$. The gradient-like flow $\phi$ is said to be {\em Morse} if all its stationary points are hyperbolic. Since hyperbolic stationary points are isolated, a Morse gradient-like flow has finitely many stationary points. The set of all stationary points of $\phi$ is denoted by $\mathrm{stat}(\phi)$. 

Given a hyperbolic stationary point $x$ of $\phi$ we denote by $E^u_x$ and $E^s_x$ the unstable and stable subspaces of $T_x M$ and by $\mathrm{ind}\, (x)$ the {\em Morse index} of $x$, i.e.\ the dimension of $E^u_x$. The  {\em unstable} and a {\em stable manifold} of $x$ are denoted by  
\[
\begin{split}
W^u(x) &:= \{ p\in M \mid \phi^t(p) \rightarrow x \mbox{ for } t\rightarrow -\infty\}, \\
W^s(x) &:= \{ p\in M \mid \phi^t(p) \rightarrow x \mbox{ for } t\rightarrow +\infty\}.
\end{split}
\]
They are smoothly embedded images of $E^u_x$ and $E^s_x$, respectively, and
\[
T_x W^u(x) = E^u_x, \qquad T_x W^s(x) = E^s_x.
\]
The sets $\{W^u(x)\}_{x\in \mathrm{stat}\, (\phi)}$ and $\{W^s(x)\}_{x\in \mathrm{stat}\, (\phi)}$ are two partitions on $M$. The Morse gradient-like flow $\phi$ is said to be {\em Smale} if for every pair $x,y$ of stationary points the unstable manifold of $x$ and the stable manifold of $y$ meet transversally.

Throughout the remaining part of this paper, $\phi$ is a Morse-Smale gradient-like smooth flow on a closed differentiable manifold $M$ of dimension $n$.

Let $x$ be a stationary point of $\phi$. We fix once and for all an identification of a neighborhood $N_x$ of $x$ with a neighborhood of $0$ in the vector space $E_x := T_x M \cong \R^n$, so that the stationary point $x$ is identified to the origin. We choose a norm on $E_x$ which is adapted to the differential of the flow at $x$, see Section \ref{gtsec}, and we denote by $D_x^u(r)$, $D^s_x(r)$ and $D_x(r) = D_x^u(r) \times D_x^s(r)$ the closed balls of radius $r$ in $E^u_x$, $E^s_x$ and $E_x$, respectively. 

We can choose this norm in such a way that its restrictions to both $E^u_x$ and $E^s_x$ are induced by scalar products on these spaces. Thanks to these facts, the sets $D^u_x(r)$ and $D^s_x(r)$ are ellipsoids and in particular have a smooth boundary.

We rescale the norm so that $D_x(1)$ is contained in the neighborhood of the origin in $E_x$ which is identified with $N_x$ and the graph transform of Proposition \ref{graph-transform} is well-defined for $r\leq 1$. In particular, we see $\{D_x(r)\}_{0<r\leq 1}$, as a fundamental system of neighborhoods of $x$ in $M$. 

It will be also convenient to choose the $C^1$ chart which identifies $N_x$ with a neighborhood of $0$ in $E_x$ in such a way that the local unstable and stable manifolds of $x$ are linear:
\[
W^s_{\mathrm{loc},r}(x) = D_x^s(r), \qquad W^u_{\mathrm{loc},r}(x)  = D_x^u(r),
\]
for every $r\in (0,1]$. Finally, we may assume that $\phi$ is transverse to the smooth hypersurfaces
\[
\partial D^u_x(1) \times D^s_x(1) \qquad \mbox{and} \qquad D^u_x(1) \times \partial D^s_x(1)
\]
and each orbits intersects them at most once.

The Morse-Smale condition has a number of standard consequences:
\begin{enumerate}[(MS-1)]
\item For every stationary point $x$ the set $\overline{W^u(x)} \setminus W^u(x)$  is the union of the unstable manifolds of some stationary points $y$ with $\mathrm{ind}(y) < \mathrm{ind}(x)$. In particular, the sets
\[
W_k^u := \bigcup_{\substack{x\in \mathrm{stat}(\phi) \\ \mathrm{ind} (x) = k}} W^u(x), \qquad k=0,1\dots,n,
\]
form a {\em smooth stratification} of $M$, meaning that $\{W^u_k\}_{k=1}^n$ is a partition of $M$, 
each $W^u_k$ is a smooth submanifold of $M$ of dimension $k$, and for every $h\in \{1,\dots, n\}$ we have
\[
\overline{W^u_h} \setminus W^u_h \subset \bigcup_{0\leq k < h} W^u_k,
\]
where the set on the right-hand side is closed. Analogous facts hold for the stable manifolds.

\item Up to reducing the size of the neighborhoods $D_x(1)$, we may assume that
\[
\phi( [0,+\infty) \times D_x(1)) \cap D_y(1) = \emptyset
\]
whenever $x$ and $y$ are distinct stationary  points of $\phi$ with $\ind(y) \geq \ind(x)$.

\end{enumerate}

We conclude this section by proving two consequences of the Morse-Smale condition, whose proofs uses the continuity of the graph transform in the $C^1$ topology. 

\begin{prop}
\label{serve}
Let $x,y$ be stationary points of $\phi$ and let $(p_h)$ be a sequence in $W^u(x)$ which converges to some $p$ in $W^u(y)$. Then for each $h\in \N$ there exist a linear subspace $V_h$ of $T_{p_h} W^u(x)$ of dimension $\mathrm{ind}(y)$ such that the sequence $(V_h)$ converges to $T_p W^u(y)$.
\end{prop}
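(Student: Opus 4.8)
\textbf{Proof plan for Proposition \ref{serve}.}

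The plan is to reduce the statement to a local analysis near the stationary point $y$ using the graph transform, exploiting the fact that the tangent spaces $T_{p_h} W^u(x)$ evolve under the linearized flow exactly as graphs of derivatives evolve under $T\phi$. First I would fix a small radius $r>0$ so that Proposition \ref{graph-transform} applies at $y$, and observe that since $p \in W^u(y)$ we may assume (after sliding the $p_h$ along the flow by a fixed finite time, which does not affect the conclusion because $d\phi^t$ is an isomorphism carrying $T_{p_h}W^u(x)$ to $T_{\phi^t(p_h)}W^u(x)$) that $p$ lies in the interior of the local unstable manifold $W^u_{\mathrm{loc},r}(y) = D^u_y(r)$, and hence $p_h \in D_y(r)$ for $h$ large. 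Now the key geometric input is the Smale transversality condition: $W^u(x)$ is transverse to $W^s(y)$, and $W^s_{\mathrm{loc},r}(y) = D^s_y(r)$, so near $p$ the submanifold $W^u(x) \cap D_y(r)$ is a graph over a neighborhood in $E^u_y$ of a $C^1$ map $\sigma_h$ into $E^s_y$, with $\sigma_h \in \mathrm{Lip}_1 \cap C^1(D^u_y(r'), D^s_y(r'))$ after possibly shrinking $r$ — here one uses that transversality is an open condition and that, having slid the points into a controlled position, the relevant pieces of $W^u(x)$ are uniformly transverse to the $E^s_y$-directions.

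Next I would run the graph transform backward in time to gain regularity and convergence. Since $W^u(x)$ is flow-invariant, the graph of $\sigma_h$ is, locally, the image under $\phi^{t}$ for large $t$ of a graph sitting near $y$; equivalently $\sigma_h$ is a fixed-"orbit" object and, because $p_h \to p$ while $p \in W^u(y)$ corresponds to the fixed point $\sigma_\infty$ of $\Gamma_\phi$ with $\graf \sigma_\infty = W^u_{\mathrm{loc},r}(y) = D^u_y(r)$ (so $\sigma_\infty \equiv 0$), the point is that $\sigma_h \to \sigma_\infty$ uniformly. Applying the graph transform for a large fixed time $T$ and invoking part (iv) of Proposition \ref{graph-transform} — continuity of $\Gamma_\phi$ in the $C^1$ topology — together with the identity $T\Gamma_\phi(T,\sigma) = \Gamma_{T\phi}(T,T\sigma)$ from \eqref{TGamma}, we get that $\Gamma_\phi(T,\sigma_h) \to \Gamma_\phi(T,\sigma_\infty) = \sigma_\infty$ in $C^1$, hence $d\sigma_h \to d\sigma_\infty = 0$ at the relevant points. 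Concretely: writing $p_h$ in the graph coordinates as $(a_h, \sigma_h(a_h))$ with $a_h \to 0$, the tangent space $T_{p_h}(W^u(x)\cap D_y(r))$ is the graph of the linear map $d\sigma_h(a_h): E^u_y \to E^s_y$, and this is an $\mathrm{ind}(x)$-dimensional space; but $d\sigma_h(a_h) \to 0$, so this tangent space converges to $E^u_y \times \{0\} = E^u_y$ as a plane, once we pick the right $\mathrm{ind}(y)$-dimensional piece.

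To extract the subspace $V_h$: inside the $\mathrm{ind}(x)$-dimensional graph $\{(v, d\sigma_h(a_h)v) : v \in E^u_y\}$ (which lives inside $E_y = E^u_y \oplus E^s_y$, with $\mathrm{ind}(x) = \dim E^u_y$ here — wait, $\dim W^u(x)$ need not equal $\dim E^u_y$; rather $\dim E^u_y = \mathrm{ind}(y)$ and $W^u(x)\cap D_y(r)$ has dimension $\mathrm{ind}(x) \geq \mathrm{ind}(y)$), one chooses the preimage under the graph projection of the whole $E^u_y$: that is, $V_h := \{(v, d\sigma_h(a_h)v) : v \in E^u_y\}$, which by transversality (surjectivity of the projection $T_{p_h}W^u(x) \to E^u_y$ along $E^s_y$, a consequence of $W^u(x) \pitchfork W^s(y)$) is an $\mathrm{ind}(y)$-dimensional subspace of $T_{p_h}W^u(x)$; then $V_h \to E^u_y = T_p W^u(y)$ in the Grassmannian because $d\sigma_h(a_h) \to 0$. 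The main obstacle I anticipate is the uniformity in step two — ensuring that the $\sigma_h$ are defined on a common domain $D^u_y(r')$ and land in $\mathrm{Lip}_1$ — which requires carefully arranging the preliminary flow-time adjustment and using (MS-2) and the transversality to get a uniform cone condition on the pieces of $W^u(x)$ near $p$; once that is in place, everything else is a direct appeal to the $C^1$-continuity of the graph transform proven in Proposition \ref{graph-transform}(iv).
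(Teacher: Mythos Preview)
Your plan has a genuine gap at precisely the step you flag as the ``main obstacle'': obtaining the uniform cone condition that lets you treat the relevant piece of $W^u(x)$ near $p_h$ as a $\mathrm{Lip}_1$ graph over $E^u_y$. You invoke $W^u(x)\pitchfork W^s(y)$ to get surjectivity of the projection $T_{p_h}W^u(x)\to E^u_y$, but the Smale condition only gives transversality at points of $W^u(x)\cap W^s(y)$, and the $p_h$ are not such points. The natural fix is to flow $p_h$ backward until it hits the entry boundary $D^u_y(1)\times\partial D^s_y(1)$ at some $q_h$, and then use transversality at the limit $q$ of the $q_h$. The trouble is that $q$ need not lie in $W^u(x)$: when there is an intermediate stationary point $z$ with $\mathrm{ind}(y)<\mathrm{ind}(z)<\mathrm{ind}(x)$ and $\overline{W^u(x)}\supset W^u(z)$, the sequence $q_h$ can converge to a point of $W^u(z)\cap W^s(y)$, and then there is no tangent space $T_qW^u(x)$ to speak of. At that moment you have no direct control on $T_{q_h}W^u(x)$, and hence none on $T_{p_h}W^u(x)$, from the Smale hypothesis alone. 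In effect you are trying to use the uniform transversality of Corollary~\ref{uniftra}, but that is a \emph{consequence} of this proposition, not an input.

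This is exactly why the paper's proof proceeds by induction on $\mathrm{ind}(x)-\mathrm{ind}(y)$. In the inductive step one passes to the limit $q\in W^u(z)\cap W^s(y)$, applies the inductive hypothesis to the sequence $q_h\to q$ (since $\mathrm{ind}(x)-\mathrm{ind}(z)<\mathrm{ind}(x)-\mathrm{ind}(y)$) to extract subspaces $W_h\subset T_{q_h}W^u(x)$ converging to $T_qW^u(z)$, and only then uses Smale transversality --- now legitimately, between $W^u(z)$ and $W^s(y)$ at $q$ --- to find a horizontal $\mathrm{ind}(y)$-dimensional subspace $V\subset T_qW^u(z)$. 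After that, your graph-transform argument works: one builds $1$-Lipschitz affine graphs $\sigma_h$ tangent to $d\phi^T(q_h)W_h$ and applies the $C^1$-continuity of $\Gamma_\phi$ at $t=+\infty$. A smaller issue: your description of $W^u(x)\cap D_y(r)$ as the graph of a map $\sigma_h:E^u_y\to E^s_y$ cannot be right when $\mathrm{ind}(x)>\mathrm{ind}(y)$, as you notice mid-argument; the object fed to the graph transform is not a piece of $W^u(x)$ itself but an $\mathrm{ind}(y)$-dimensional affine disk tangent to the subspace furnished by the induction.
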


\begin{proof}
By property (MS-1) above we have $\mathrm{ind}(y) \leq \mathrm{ind}(x)$ with equality if and only if $y=x$. We argue by induction on $\mathrm{ind}(x) - \mathrm{ind}(y)$. 

If $\mathrm{ind}(x) - \mathrm{ind}(y)=0$, then $y=x$ and $V_h := T_{p_h} W^u(x)$ converges to $T_p W^u(x)$ because $W^u(x)$ is an embedded submanifold. 

We assume the claim to be true when $\mathrm{ind}(x) - \mathrm{ind}(y)<k$ for some $k\geq 1$, and we consider the case $\mathrm{ind}(x) - \mathrm{ind}(y)=k$. It is enough to construct the linear subspaces with the desired asymptotic behaviour for a subsequence of $(p_h)$.
Up to applying $\phi^{-t}$ for $t$ sufficiently large, we may assume that the sequence $(p_h)$ belongs to the closed neighborhood $D_y(1)$ of $y$. Since $p_h$ is not in the unstable manifold of $y$, we can find $t_h\geq 0$ so that 
\[
q_h := \phi^{-t_h}(p_h) \in D^u_y(1) \times \partial D^s_y(1).
\]
Since $(p_h)$ converges to a point in the unstable manifold of $y$, the sequence $(t_h)$ tends to $+\infty$. Up to replacing $(p_h)$ with a subsequence, the sequence $(q_h)$ converges to some $q\in D_y((1)$. The limiting point $q$ must belong to $W^s(y)$, because any the forward orbit of any other point in $D_y(1)$ eventually leaves $D_y(1)$ and $(t_h)$ is unbounded. 

Let $z$ be the stationary point of $\phi$ such that $q$ belongs to $W^u(z)$. Being in $D^u_y(1) \times \partial D^s_y(1)$, $q$ does not coincide with $y$, so by the Morse-Smale assumption $\mathrm{ind}(z) > \mathrm{ind}(y)$. Therefore, $\mathrm{ind}(x) - \mathrm{ind}(z)< k$ and by the inductive assumption we can find a sequence of linear subspaces $W_h \subset T_{q_h} W^u(x)$ of dimension $\mathrm{ind}(z)$ which converges to $T_q W^u(z)$.

By the transversality of $W^u(z)$ and $W^s(y)$ in $q$ we can find a linear subspace $V$ of $T_q W^u(z)$ such that
\[
V \cap T_q W^s(y) = \{0\} \qquad \mbox{and} \qquad T_q M = V \oplus T_q W^s(y).
\]
Since $V$ is a direct summand of the tangent space to $W^s(y)$, its forward evolution with respect to the linearized flow tends to $T_y W^u(y)$, that is
\[
d \phi^t(q) V \rightarrow T_y W^u(y) \qquad \mbox{for } t\rightarrow +\infty,
\]
see e.g.\ \cite{ama03}[Theorem 2.1 (iii)]. In particular, we can find $T\geq 0$ such that $d\phi^T(q) V$ is the graph of a linear mapping from $E^u_y$ to $E^s_y$ of operator norm at most $1/3$. Up to the choice of a larger $T$, we can also assume that $\phi^T(q)$ belongs to $E_y(1/3)$. The sequence of linear subspaces 
\[
d\phi^T(q_h) W_h \subset T_{\phi^T(q_n)} W^u(x)
\]
converges to $d\phi^T(q) V$. Therefore, when $h$ is large enough the space $d\phi^T(q_h) W_h$ is the graph of a linear mapping $L_h : E^u_y \rightarrow E_y^s$ of operator norm at most $1/2$. If $h$ is large enough we also have $T\leq t_h$ and hence $\phi^T(q_h)$ belongs to $E_y(1)$. 

Let $(u_h,s_h)\in D_y^u(1) \times D_y^s(1)$ be the components of $\phi^T(q_h)$ in the splitting $E_y^u \times E^s_y$. We have the convergence
\[
(u_h,s_h) \rightarrow (\overline{u},\overline{s}) = \phi^T(q). 
\]
Since $\|\overline{s}\| \leq 1/3$ and $\|L_h\|\leq 1/2$, the map
\[
\sigma_h (u) := s_h + L_h (u-u_h)
\]
belongs to $\mathrm{Lip}_1(D^u_y(1),D^s_y(1))$ when $h$ is large enough. Up to a subsequence, $(\sigma_h)$ converges in the $C^1$-topology to a map
\[
\sigma(u) = \overline{s} + L ( u - \overline{u}),
\]
where the operator norm of $L$ is at most 1/2. By the continuity of the graph transform
\[
\Gamma_{\phi} : \mathrm{Lip}_1\cap C^k( D^u_y(1), D^s_y(1)) \longrightarrow \mathrm{Lip}_1\cap C^k( D^u_y(1), D^s_y(1)),
\]
which is proved in Proposition \ref{graph-transform}, we deduce that $\Gamma_{\phi}(t_h-T,\sigma_h)$ converges to $\Gamma(+\infty,\sigma) = \sigma_{\infty}$ in the $C^1$-topology, where
\[
\mathrm{graph} \, \sigma_{\infty} = W^u_{\mathrm{loc},1}(y) = W^u(y) \cap D_y(1).
\]
Together with the chain of identities
\[
\begin{split}
d\phi^{t_h}(q_h) W_h &= d\phi^{t_h-T}(\phi^T (q_h)) d\phi^T(q_h) W_h = d \phi^{t_h-T}(\phi^T(q_h)) \,\mathrm{graph} \, L_h \\ &= d \phi^{t_h-T}(\phi^T(q_h))\, \mathrm{graph} \, d\sigma_h(u_h) = d \phi^{t_h-T}(\phi^T(q_h)) T_{\phi^T(q_h)} \mathrm{graph}\, \sigma_h \\ &= T_{\phi^{t_h}(q_h)} \phi^{t_h-T} \bigl(\mathrm{graph}\, \sigma_h \bigr) = T_{p_h} \mathrm{graph}\, \Gamma_{\phi} (t_h-T,\sigma_h) \\ &= \mathrm{graph}\, d \Gamma_{\phi} (t_h-T,\sigma_h)( P^u p_h )
\end{split}
\]
where $P^u$ denotes the projection onto $E^u_y$ in the splitting $E_y = E^u_y \times E^s_y$, we obtain that $(d\phi^{t_h}(q_h) W_h)$ converges to
\[
\mathrm{graph}\, d \sigma_{\infty} (P^u p) = T_p \, \mathrm{graph} \, \sigma_{\infty} = T_p W^u(y),
\]
and $d\phi^{t_h}(q_h) W_h \subset T_{p_h} W^u(x)$ is the required sequence of linear subspaces.
\end{proof}

A corollary of the above result is that in a Morse-Smale gradient-like flow the intersections of stable and unstable manifolds of pairs of stationary points are {\em uniformly} transverse. In order to clarify this statement, we need to recall how transversality can be measured quantitatively. Let $V$ and $W$ be linear subspaces of an $n$-dimensional Euclidean space with $\dim V + \dim W \geq n$. Then we set
\[
\mathscr{T}(V,W) := \max_{\substack{V_0 \mathrm{\; subspace\; of\; } V \\ W_0 \mathrm{\; subspace\; of\;} W \\ \dim V_0 + \dim W_0 = n}} \min_{\substack{v\in V_0 \setminus \{0\}, \\ w\in W_0 \setminus \{0\} }} \mathrm{ang} (v,w) \in [0,\pi/2],
\]
where $\mathrm{ang} (v,w)\in [0,\pi]$ denotes the angle between the two non-vanishing vectors $v$ and $w$. The function $\mathscr{T}$ has the good properties which are required by a function measuring transversality of two linear subspaces:
\begin{enumerate}[($\mathscr{T}$-1)]
\item $\mathscr{T}(V,W) = \mathscr{T}(W,V)$;
\item $\mathscr{T}(V,W)>0$ if and only if $V$ and $W$ are transverse;
\item the function $\mathscr{T}$ is continuous in the Grassmannian topology;
\item if $V\subset V'$ then $\mathscr{T}(V,W)\leq \mathscr{T}(V',W)$.
\end{enumerate}
By using a Riemannian metric on $M$, the function $\mathscr{T}$ can be extended to pairs of linear subspaces of the tangent spaces of $M$.
We can now state the corollary about the uniformity of transversality for Morse-Smale gradient-like flows:

\begin{cor}
\label{uniftra}
The positive function
\[
M\rightarrow [0,\pi/2], \qquad p \mapsto \mathscr{T} \Bigl( T_p W^u(x(p)), T_p W^s(y(p)) \Bigr),
\]
where $x(p)$ and $y(p)$ are the unique stationary points such that $p\in W^u(x(p))\cap W^s(y(p))$, is lower semicontinuous. In particular, this function assumes a positive minimum on the compact manifold $M$.
\end{cor}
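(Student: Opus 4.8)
The plan is to prove lower semicontinuity sequentially, feeding Proposition~\ref{serve} into the elementary properties ($\mathscr{T}$-1)--($\mathscr{T}$-4) of the transversality measure. First I would fix a convergent sequence $p_h \to p$ in $M$ and, since $\phi$ has only finitely many stationary points, pass to a subsequence along which $x(p_h) = x$ and $y(p_h) = y$ are constant; then $p_h \in W^u(x) \cap W^s(y)$ for all $h$, while $p \in \overline{W^u(x)} \cap \overline{W^s(y)}$. From (MS-1) and its stated analogue for stable manifolds one gets $\mathrm{ind}(x(p)) \le \mathrm{ind}(x)$ and $\mathrm{ind}(y(p)) \ge \mathrm{ind}(y)$, while the Smale transversality of $W^u(x(p))$ with $W^s(y(p))$ at $p$ forces $\mathrm{ind}(x(p)) + (n - \mathrm{ind}(y(p))) \ge n$, i.e.\ $\mathrm{ind}(x(p)) \ge \mathrm{ind}(y(p))$. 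In particular, any subspace $V \subset T_{p_h}W^u(x)$ of dimension $\mathrm{ind}(x(p))$ and any subspace $W \subset T_{p_h}W^s(y)$ of dimension $n - \mathrm{ind}(y(p))$ satisfy $\dim V + \dim W \ge n$, so that $\mathscr{T}(V, W)$ is defined.

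Next I would apply Proposition~\ref{serve} twice. Applied directly to $(p_h) \subset W^u(x)$ with limit $p \in W^u(x(p))$, it produces subspaces $V_h \subset T_{p_h}W^u(x)$ of dimension $\mathrm{ind}(x(p))$ with $V_h \to T_pW^u(x(p))$. Applied instead to the time-reversed flow $\phi^{-t}$ --- which is again a smooth Morse-Smale gradient-like flow, with stable and unstable manifolds interchanged --- and to the sequence $(p_h) \subset W^s(y)$ with limit $p \in W^s(y(p))$, it produces subspaces $W_h \subset T_{p_h}W^s(y)$ of dimension $n - \mathrm{ind}(y(p))$ with $W_h \to T_pW^s(y(p))$.

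Finally I would combine these. By symmetry ($\mathscr{T}$-1) and monotonicity ($\mathscr{T}$-4), applied in both slots,
\[
\mathscr{T}\bigl(T_{p_h}W^u(x),\, T_{p_h}W^s(y)\bigr) \ge \mathscr{T}(V_h, W_h),
\]
and since $p_h \to p$, $V_h \to T_pW^u(x(p))$ and $W_h \to T_pW^s(y(p))$, the continuity of $\mathscr{T}$ in the Grassmannian topology ($\mathscr{T}$-3) yields $\mathscr{T}(V_h, W_h) \to \mathscr{T}(T_pW^u(x(p)), T_pW^s(y(p)))$. Hence $\liminf_h \mathscr{T}(T_{p_h}W^u(x(p_h)), T_{p_h}W^s(y(p_h))) \ge \mathscr{T}(T_pW^u(x(p)), T_pW^s(y(p)))$ along the chosen subsequence; since every subsequence of the original one admits a further subsequence of the above type, the same bound holds for the full sequence, which is exactly lower semicontinuity. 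A lower semicontinuous function on the compact manifold $M$ attains its infimum, and by ($\mathscr{T}$-2) together with the Smale condition this function is everywhere positive, so its minimum is positive.

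I do not expect a genuine obstacle: Proposition~\ref{serve} carries all the analytic weight, and the rest is bookkeeping --- verifying $\dim V_h + \dim W_h \ge n$ so that $\mathscr{T}(V_h, W_h)$ makes sense, and observing that Proposition~\ref{serve} transfers verbatim to the reversed flow to handle the stable side. The one mildly delicate point is to apply monotonicity in both arguments of $\mathscr{T}$ via its symmetry, and to match the convergences furnished by Proposition~\ref{serve} with the ones consumed by the continuity of $\mathscr{T}$.
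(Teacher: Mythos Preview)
Your proposal is correct and follows essentially the same approach as the paper's own proof: pass to a subsequence with constant $x(p_h)$ and $y(p_h)$, invoke Proposition~\ref{serve} on both the unstable and stable sides (the latter via time reversal), and combine monotonicity ($\mathscr{T}$-4) with continuity ($\mathscr{T}$-3) of the transversality measure. You are slightly more careful than the paper in checking that $\dim V_h + \dim W_h \ge n$ and in invoking symmetry ($\mathscr{T}$-1) to apply monotonicity in the second slot, but the argument is the same.
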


\begin{proof}
Let $(p_h)\subset M$ be a sequence converging to some $p\in M$. Up to a subsequence, we may assume that there are two stationary points $x$ and $y$ such that $x(p_h)=x$ and $y(p_h)=y$ for all $h\in \N$. Let $x'$ and $y'$ be the stationary points such that $p\in W^u(x')\cap W^s(y')$. By Proposition \ref{serve} there are sequences of subspaces $V_h^u\subset T_{p_h} W^u(x)$ and $V_h^s\subset T_{p_h} W^s(y)$ such that
\[
V_h^u \rightarrow T_p W^u(x') \qquad \mbox{and} \qquad V_h^s \rightarrow T_p W^s(y').
\]
Then
\[
\mathscr{T} \Bigl( T_{p_h} W^u(x), T_{p_h} W^s(y) \Bigr) \geq \mathscr{T}(V_h^u,V_h^s),
\]
and, since the latter quantity converges to $\mathscr{T}( T_p W^u(x'), T_p W^s(y'))$, we obtain 
\[
\liminf_{h\rightarrow \infty} \mathscr{T} \Bigl( T_{p_h} W^u(x), T_{p_h} W^s(y) \Bigr) \geq \mathscr{T}\Bigl( T_p W^u(x'), T_p W^s(y')\Bigr).
\]
This proves the lower semicontinuity.
\end{proof}

\section{Stable and unstable foliations}
\label{folsec}

Let $x$ be a stationary point of $\phi$ of Morse index $k$. Let $\Sigma$ be a smooth hypersurface in $M$ that is transverse to the flow $\phi$ and meets the stable manifold $W^s(x)$ transversally at some stable sphere $\partial D^s_x(r)$, $0<r\leq 1$. Up to reducing the size of $\Sigma$, it is easy to endow it with a smooth foliation 
\[
\{L^u(q)\}_{q\in \partial D^s_x(r)}
\] 
consisting of $k$-dimensional open embedded disks such that each $L^u(q)$ meets $W^s(x)$ transversally at $q$. If we evolve $\Sigma$ and its foliation by the flow $\phi$ we obtain - after possibly reducing the size of $\Sigma$ and its foliating disks - a smooth invariant foliation 
\[
\{L^u(q)\}_{q\in W^s(x)\setminus \{x\}}
\] 
of an invariant neighborhood of $W^s(x)\setminus \{x\}$. Here, the leaf $L^u(q)$ meets $W^s(x)$ transversally at $q$. By using the graph transform one can show that the leaves $L^u(q)$ tend to the unstable manifold $W^u(x)$ for $q\rightarrow x$, and hence one can set $L(x):= W^u(x)$ and obtain an invariant foliation of an invariant neighborhood $U$ of $W^u(x)\cup W^s(x)$. This foliation 
\[
\{L^u(q)\}_{q\in W^s(x)}
\] 
is known as an unstable foliation for $x$, and $U$ can be called an invariant tubular neighborhood of $W^s(x)$, as it carries a retraction  
\[
\pi^s : U_x \rightarrow W^s(x)
\]
mapping the point of each leaf $L^u(q)$ into its base points $q\in W^s(x)$. In general, this unstable foliation is not smooth at the points of $W^u(x)$, but it
turns out to be a partially smooth foliation in the sense of Definition \ref{admissible}. By inverting the time arrow, one obtains stable foliations 
\[
\{L^s(p)\}_{p\in W^u(x)}
\]
on a tubular neighborhood $U$ of $W^u(x)$. Such a stable foliation can be chosen to be smooth on $U\setminus W^s(x)$,  but will be just a partially smooth foliation on the whole $U$.

In the next section, we will construct stable foliations on tubular neighborhoods $U_x$ of $W^u(x)$ for each stationary point $x$ of $\phi$ in such a way that each stable foliation of dimension $k$ refines all the other stable foliations of dimension $k'>k$. These foliations will be constructed inductively on the Morse index, starting from the stationary points of index $0$. The lack of smoothness of the stable foliation of $U_x$ at points sitting on $W^s(x)$ will propagate in this inductive construction, so that when $\ind (y)>\ind (x)$ the stable foliation on $U_y$ will not be smooth outside of $W^s(y)$, but will be nevertheless a partially smooth foliation. This explains the regularity assumptions in the next definition and in the subsequent proposition.

\begin{defn}
Let $x$ be a stationary point of $\phi$ with Morse index $k$. An  {\em invariant tubular neighborhood} of $W^s(x)$ consists of an  invariant open neighborhood $U$ of $W^s(x)$ together with a continuous retraction $\pi^s : U \rightarrow W^u(x)$ such that:
\begin{enumerate}[(i)]
\item The fibers of $\pi^s$, which are denoted by
\[ 
L^u(q):= (\pi^s)^{-1}(\{q\}), \qquad q\in W^s(x),
\]
form a partially smooth $k$-dimensional foliation $\mathscr{L}^u$ of $U$. Each leaf $L^u(q)$ is transverse to $W^s(x)$ at the point $q$.
This foliation is called {\em unstable foliation} and its leaves {\em unstable leaves}.
\item The unstable foliation is invariant under the flow $\phi$. More precisely:
\[
\phi^t(L^u(q)) = L^u(\phi^t(q)) \qquad \forall t\in \R, \; \forall q\in W^s(x).
\]
In particular, $L^u(x)=W^u(x)$.

\end{enumerate}
By reversing the role of time, we obtain the definition of  invariant tubular neighborhood 
\[
\pi^u: U \rightarrow W^u(x)
\]
of $W^u(x)$ and of stable foliation
\[
\mathscr{L}^s=\{L^s(p)\}_{p\in W^u(x)} = \{ (\pi^u)^{-1}(\{p\}) \}_{p\in W^u(x)}.
\]
\end{defn}

Notice that, by invariance, the neighborhood $U$ of $W^s(x)$ (or of $W^u(x)$) in the above definition is actually a neighborhood of $W^u(x)\cup W^s(x)$. By reducing the invariant neighborhood $U$, one can of course obtain that the leaves $L^u(q)$ (or $L^s(p)$) are diffeomorphic to open disks, but we do not require this in the above definition.

The existence of invariant tubular neighborhoods and of the corresponding stable foliations with some further compatibility properties will be established in the next section by using the following results. 

\begin{prop} 
\label{stafol-prop}
Let $\tilde{U}$ be an invariant open neighborhood of $W^s(x)\setminus \{x\}$ that is contained in $M\setminus W^u(x)$ and let 
\[
\{ L^u(q) \}_{q\in W^s(x) \setminus \{x\}} 
\]
be a $\phi$-invariant $k$-dimensional partially smooth foliation of $\tilde{U}$ such that each leaf $L^u(p)$ is transverse to $W^s(x)$ at $q$. Set
\[
U:= \tilde{U}\cup W^u(x), \qquad L^u(x):= W^u(x).
\]
Then $U$ is an invariant tubular neighborhood of $W^s(x)$ with the retraction
\[
\pi^s : U \rightarrow W^s(x), \qquad p\mapsto q \;\; \mbox{if } p\in L^u(q), 
\]
and the unstable foliation
\[
\mathscr{L}^u := \{ L^u(q) \}_{q\in W^s(x)}.
\]
\end{prop}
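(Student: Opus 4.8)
The plan is to verify the defining conditions of an invariant tubular neighbourhood, after reducing everything to a local model near $W^u_{\mathrm{loc}}(x)$ built with the graph transform. Three conditions are immediate. Flow invariance of $\mathscr{L}^u$ holds for $q\ne x$ by hypothesis and for $q=x$ because $\phi^t(W^u(x))=W^u(x)$. The map $\pi^s$ is a retraction since $q\in L^u(q)$ and the leaves partition $U$; it is single valued once we assume, shrinking $\tilde U$ if necessary, that every leaf meets $W^s(x)$ in exactly one point, which also makes $L^u(x)=W^u(x)$ the only leaf through $x$ (recall $W^u(x)\cap W^s(x)=\{x\}$ because $\phi$ is gradient-like, so no homoclinic point exists). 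Transversality of $L^u(q)$ to $W^s(x)$ is assumed for $q\ne x$, and for $q=x$ it is the hyperbolicity relation $E^u_x\oplus E^s_x=T_xM$. Hence the substance of the proposition is: $U$ is open; $\mathscr{L}^u$ is a partially smooth $k$-dimensional foliation of $U$, the only issue being at the points of $W^u(x)$, since on $\tilde U$ it is one by hypothesis; and $\pi^s$ is continuous.

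Next I would localize these three remaining assertions. Since $\pi^s$ is flow-equivariant, $\pi^s\circ\phi^t=\phi^t\circ\pi^s$, since partially smooth parametrizations (in the sense of Definition \ref{admissible}) and openness are preserved by the smooth diffeomorphisms $\phi^t$, and since every point of $W^u(x)$ equals $\phi^{t_0}(p_0)$ with $t_0\ge 0$ and $p_0$ arbitrarily close to $x$ in $W^u_{\mathrm{loc}}(x)$, it suffices to produce, in the chosen local coordinates near $x$ (in which $W^u_{\mathrm{loc}}(x)$ and $W^s_{\mathrm{loc}}(x)$ are the coordinate balls $D^u_x(r)$ and $D^s_x(r)$), an open set $W\subset U$ that is a neighbourhood of some $D^u_x(r_0)$, on which $\mathscr{L}^u$ admits a partially smooth parametrization and $\pi^s$ is continuous. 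Then $\tilde U$ together with the open sets $\phi^t(W)$, $t\ge 0$, covers $U$, and this yields at once the openness of $U$, the partially smooth foliation structure, and the continuity of $\pi^s$ on all of $U$.

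The local model comes from Proposition \ref{graph-transform}. Fix a small stable sphere $S=\partial D^s_x(\rho_*)\subset W^s(x)\setminus\{x\}$, transverse to $\phi$ with the flow pointing inward, so that $\bigcup_{t\ge 0}\phi^t(S)=\overline{D^s_x(\rho_*)}\setminus\{x\}$ and the time and point at which a backward orbit meets $S$ depend continuously on the base point. Over the compact set $S$ the given partially smooth foliation is uniform: each leaf through a point $q_0\in S$ contains a $k$-disk $\Delta_{q_0}$ which is a graph over a ball around the origin in $E^u_x$ of uniformly bounded slope, varying continuously with $q_0$ in the $C^\infty$ topology. Flowing forward a fixed large time $T_0$ flattens these disks, because $d\phi^{T_0}$ expands $E^u_x$ and contracts $E^s_x$, so that for $T_0$ large the disk $\phi^{T_0}(\Delta_{q_0})\cap D_x(r_0)$ is the graph of a map $\hat\sigma_{q_0}\in\mathrm{Lip}_1\cap C^\infty(D^u_x(r_0),D^s_x(r_0))$ depending continuously on $q_0$ in every $C^k$ topology. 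Writing $q\in W^s(x)\setminus\{x\}$ close to $x$ as $q=\phi^t(q_0)$ with $q_0\in S$ and $t\ge T_0$ large, the leaf $L^u(q)=\phi^{t}(L^u(q_0))$ contains, inside $D_x(r_0)$, the graph of $\sigma_q:=\Gamma_\phi(t-T_0,\hat\sigma_{q_0})$ by part (ii) of Proposition \ref{graph-transform}; by part (iv), applied for every order of differentiability, $q\mapsto\sigma_q$ is continuous into $C^\infty(D^u_x(r_0),D^s_x(r_0))$ and $\sigma_q\to\sigma_\infty$ as $q\to x$, uniformly over $q_0\in S$ since $\{\hat\sigma_{q_0}\}_{q_0\in S}$ is compact, where $\mathrm{graph}\,\sigma_\infty=W^u_{\mathrm{loc}}(x)$, i.e. $\sigma_\infty=0$. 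Parametrizing these leaves by their stable base point $s'$ (every $(0,s')$ with $|s'|$ small arises as some $\phi^t(q_0)$ with $t\ge T_0$, by the description of $\bigcup_t\phi^t(S)$) and setting $\sigma_0:=0$, the map $\Phi(u,s'):=\bigl(u,\sigma_{s'}(u)\bigr)$, defined for $u$ and $s'$ in small balls around the origin in $E^u_x$ and $E^s_x$, is continuous, infinitely differentiable in $u$ with continuous partial differentials $d_1^h$, and injective: distinct $s'$ give graphs contained in distinct leaves, because each leaf meets $W^s(x)$ once, and $\sigma_{s'}(0)=s'$ for the same reason. By invariance of domain, $\Phi$ is a homeomorphism onto an open set $W\subset U$ whose intersection with each leaf of $\mathscr{L}^u$ is the corresponding slice of $\Phi$; thus $\Phi$ is a partially smooth parametrization of $\mathscr{L}^u$ near $D^u_x(r_0)$, which gives the openness of $U$ and the foliation structure. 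Finally $\pi^s\circ\Phi(u,s')=\bigl(0,s'\bigr)$, so $\pi^s$ restricted to $W$ is $\Phi$ composed with the projection onto the second factor and with $\Phi^{-1}$, hence continuous.

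The step I expect to be the main obstacle is the $C^\infty$ control of the family $q\mapsto\sigma_q$ as $q\to x$, that is, the convergence of the leaves to $W^u(x)$ together with the continuity of the resulting parametrization up to its boundary leaf. Near $x$ the original foliation carries no information, its domain $\tilde U$ omitting $x$, and it is precisely the contraction property of the graph transform, applied after the flattening on the compact stable sphere $S$, that simultaneously produces the limiting leaf $W^u_{\mathrm{loc}}(x)$ and the continuity at $q=x$. Everything else---the equivariant reductions of the first two paragraphs, the verification that flattening turns $\Delta_{q_0}$ into a genuine graph over the full ball $D^u_x(r_0)$, and the continuous dependence of the hitting time on $S$---is routine.
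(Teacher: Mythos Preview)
Your proposal is correct and follows essentially the same route as the paper: reduce by flow-equivariance to a neighbourhood of $W^u_{\mathrm{loc}}(x)$, represent the leaves $L^u(q)$ for $q$ close to $x$ as graphs $\sigma_q$ obtained by applying the graph transform to a compact family of initial data living on a stable sphere, and invoke Proposition~\ref{graph-transform}(iv) to get $C^\infty$ continuity of $q\mapsto\sigma_q$ up to $q=x$, so that $(u,q)\mapsto(u,\sigma_q(u))$ is a partially smooth parametrization. The paper phrases the preliminary flattening slightly differently---it cites a convergence result for tangent spaces under the linearized flow rather than your direct $d\phi^{T_0}$ argument---and does not explicitly mention invariance of domain, but these are cosmetic differences.
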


\begin{proof}
We have to prove that the invariant set $U$ is open and that $\mathscr{L}^u$ is a partially smooth foliation of it. Indeed, once this is proven the continuity of the retraction $\pi^s$ follows from the fact that each leaf $L^u(q)$ is transverse to $W^s(x)$ at $q$.

By assumption, $\mathscr{L}^u$ restricts to a partially smooth foliation of $\tilde{U}$, so we just need to consider a point $p\in W^u(x)$ and show that a $U$ contains an open neighborhood of $p$ and $\mathscr{L}^u$ is a partially smooth foliation on this neighborhood. By the flow invariance, we may move $p$ on its $\phi$-orbit and assume that it is arbitrarily close to $x$.

Recall that we are identifying a neighborhood of $x$ with the product $D_x(1) = D_x^u(1) \times D_x^s(1)$, where $D^u_x(1)$ resp.\ $D^s_x(1)$ is the unit ball in the unstable space $E^u_x$ resp.\ stable space $E^s_x$, in such a way that
\[
W^u(x) \cap D_x(1) = D^u_x(1) \times \{0\}, \qquad W^s(x) \cap D_x(1) = \{0\} \times D^s_x(1).
\]

We claim that there is a number $0<r<1/2$ such that for every $q\in D^s_x(r)$ the intersection of the leaf $L^u(q)$ with $D_x(2r)$ is the graph of a smooth map
\[
\sigma_q: D^u_x(2r) \rightarrow D^s_x(2r)
\]
that is $1/2$-Lipschitz and satisfies $\sigma_q(0)=q$. 

This is obviously true for $q=0$, as we can take $\sigma_0$ to be the zero map. Any other $q\in D^s_x(1)$ is the evolution for some non-negative time of some $q_0\in \partial D^s_x(1)$. By assumption, the tangent space of $L^u(q_0)$ at $q_0$ is a $k$-dimensional subspace of $E_x^u \times E^s_x$ that is transverse to $(0) \times E^s_x$. This fact implies that the evolution of this space under the differential of the flow converges to the unstable space $E^u_x$ for $t\rightarrow +\infty$:
\[
d\phi^t(q_0) T_{q_0} L^u(q_0) \rightarrow E^u_x \times (0) \qquad \mbox{for } t\rightarrow +\infty,
\]
see for instance \cite[Theorem 2.1]{ama03}. This convergence is uniform in $q_0\in \partial D^s_x(1)$. Therefore, the invariance property of the foliation $\{L^u(q)\}_{q\in W^s(x) \setminus \{x\}}$ implies that there exists a positive number $r_1<1$ such that for every $q\in D^s_x(r_1)\setminus \{0\}$ the tangent space of $L^u(q)$ at $q$ is the graph of a linear map from $E^u_x$ to $E^s_x$ of operator norm smaller than $1/3$. Then the germ at $q$ of each leaf $L^u(q)$, $q\in D^s_x(r_1)\setminus \{0\}$, is the graph of a smooth map from a small disk around $0$ in $E^u_x$ into a small disk around $0$ in $E^s_x$ that is $1/2$-Lipschitz. By using standard results on how the flow near the hyperbolic stationary point $x$ acts on Lipschitz graphs, we obtain the claim (see e.g. \cite{shu87}[Lemma 5.6] or \cite{ama01}[Addendum A.5]). 

By the claim, 
\[
L^u(q) \cap D_x(2r) = \{ (p,\sigma_q(p)) \mid p\in D^u_x(2r)\}, \qquad \forall q\in D^s_x(r).
\]
Denote by 
\[
\tau: D^s_x(r) \rightarrow [0,+\infty]
\]
the continuous function that is defined by
\[
\phi^{-\tau(q)} \in \partial D^s_x(r) \quad \forall q\in D^s_x(r) \setminus \{0\}, \qquad \tau(0):= +\infty.
\]
By the flow invariance, we have
\[
\sigma_q = \Gamma(\tau(q),\sigma_{\phi^{-\tau(q)} (q)}) \qquad \forall q\in D^s_x(r) \setminus \{0\},
\]
where $\Gamma$ denotes the graph transform on 1-Lipschitz maps from $D^u_x(2r)$ to $D^s_x(2r)$. Moreover,
\[
\sigma_0 = 0 = \Gamma(+\infty,\sigma_q) \qquad \qquad  \forall q\in D^s_x(r).
\]
By the admissibility of the foliation $\mathscr{L}$ near $\partial D^s_x(r)$ and by regularity property of the graph transform that is stated in Proposition \ref{graph-transform} (iv), the map
\[
D^u_x(2r) \times D^s_x(r) \rightarrow D^s_x(2r), \qquad (p,q) \mapsto \sigma_q(p),
\]
is continuous, infinitely differentiable in the first variable $p\in D^u_x(2r)$ and all its partial differentials $d^h\sigma_q(p)$ in this variable are continuous on $D^u_x(2r) \times D^s_x(r)$. Therefore, the homeomorphism
\[
(p,q) \mapsto (p,\sigma_q(p))
\]
is a partially smooth parametrization of the foliation $\mathscr{L}^u$ at every point in the interior of $D^u_x(2r)$, as in Remark \ref{repgrafici}.
\end{proof}

Let $x$ be a stationary point of index $k$ and let
\[
\pi^u: U \rightarrow W^u(x) \qquad \mbox{and} \qquad \pi^s: U' \rightarrow W^s(x)
\]
be invariant tubular neighborhoods of $W^u(x)$ and $W^s(x)$, respectively, with stable and unstable foliations
\[
\mathscr{L}^s = \{L^s(p)\}_{p\in W^u(x)} \qquad \mbox{and} \qquad \mathscr{L}^u = \{L^u(p)\}_{q\in W^s(x)}.
\]
We assume the unstable foliation $\mathscr{L}^u$ to be smooth on $U'\setminus W^u(x)$, whereas $\mathscr{L}^s$ is just assumed to be partially smooth. Our aim now is to show that these two invariant foliations define a {\em product structure} on an invariant open subset of $U\cap U'$, that allow us to identify this set with an open subset of $W^u(x) \times W^s(x)$.

Arguing as in the proof of Proposition \ref{stafol-prop}, we find a small positive number $r< 1/2$ such that:
\begin{enumerate}[($\sigma$-1)]
\item For every $q\in D^s_x(r)$ the intersection of $L^u(q)$ with $D_x(2r)$ is the graph of a smooth map
\[
\sigma_q^u : D_x^u(2r) \rightarrow D_x^s(2r)
\]
that is $1/2$-Lipschitz and satisfies $\sigma_q^u(0)=q$.
\item For every $p\in D^s_x(r)$ the intersection of $L^s(p)$ with $D_x(2r)$ is the graph of a smooth map
\[
\sigma_p^s : D_x^s(2r) \rightarrow D_x^u(2r)
\]
that is $1/2$-Lipschitz and satisfies $\sigma_p^s(0)=p$.
\end{enumerate}

Fix $p\in D^u_x(r)$ and $q\in D^s_x(r)$. Being graphs of maps with Lipschitz constant smaller than 1, the graphs of $\sigma_p^u$ and $\sigma_q^s$ have a unique intersection point, that we denote by
\[
[p,q] := \mathrm{graph} (\sigma_p^u) \cap \mathrm{graph} (\sigma_q^s) \in D_x(2r).
\]
In other words, $[p,q]$ denotes the unique intersection point of the leaves $L^u(p)$ and $L^s(q)$ in $D_x(2r)$. The map
\begin{equation}
\label{prodloc}
D^u_x(r) \times D^s_x(r) \rightarrow M, \qquad (p,q) \mapsto [p,q],
\end{equation}
is easily seen to be a homeomorphism onto its image. We denote by $V_0$ the open set
\[
V_0 := \{ [p,q] \mid (p,q)\in \mathrm{int}(D^u_x(r))\times \mathrm{int}(D^s_x(r))\},
\]
and by $V$ the invariant open set that is generated by $V_0$:
\[
V := \phi( \R \times V_0).
\]
The map (\ref{prodloc}) extends by dynamical continuation to the invariant set $V$: The set
\[
\Omega := \{ (\phi^t(p), \phi^t(q)) \mid (p,q)\in \mathrm{int}(D^u_x(r))\times \mathrm{int}(D^s_x(r)), \; t\in \R\}
\]
is an open neighborhood of $(W^u(x)\times \{x\}) \cup (\{x\} \times W^s(x))$ in $W^u(x)\times W^s(x)$ and the identity
\[
[\phi^t(p),\phi^t(q)] = \phi^t([p,q]) \qquad \forall  (p,q)\in \mathrm{int}(D^u_x(r))\times \mathrm{int}(D^s_x(r)), \; t\in \R,
\]
defines a homeomorphism
\begin{equation}
\label{homeo}
\Omega \rightarrow V, \qquad (p,q) \mapsto [p,q],
\end{equation}
which is a conjugacy between the diagonal flow $\phi^t\times \phi^t$ on $\Omega$ and the flow $\phi$ on $V$. This map is said to be a {\em product structure} on $V$

It is useful to have a better understanding of the open set $\Omega$. To this purpose, we define the function
\[
\tau^u: W^u(x) \rightarrow [-\infty,+\infty)
\]
by
\begin{equation}
\label{tauu}
\tau^u(p):= \inf \{ t\in \R \mid p \in \phi^t(D^u_x(r)) \}.
\end{equation}
The function $\tau^u$ is continuous, smooth on $W^u(x)\setminus \{x\}$ and satisfies the following conditions:
\[
\tau^u(p) = -\infty \; \Leftrightarrow \; p=x, \quad D^u_x(r) = \{p\in W^u(x) \mid \tau^u(p) \leq 0\}, \quad \tau^u(\phi^t(p)) = \tau^u(p) + t,
\]
for every $p\in W^u(x)$ and $t\in \R$. Similarly, we introduce the function
\begin{equation}
\label{taus}
\tau^s: W^s(x) \rightarrow (-\infty,+\infty], \qquad
\tau^s(q):= \sup \{ t\in \R \mid q \in \phi^t(D^s_x(r)) \},
\end{equation}
which is continuous, smooth on $W^s(x)\setminus \{x\}$, and satisfies
\[
\tau^s(q) = +\infty \; \Leftrightarrow \; q=x, \quad D^s_x(r) = \{q\in W^s(x) \mid \tau^s(q) \geq 0\}, \quad \tau^s(\phi^t(q)) = \tau^s(q) + t,
\]
for every $q\in W^s(x)$ and $t\in \R$. From the identity
\[
\mathrm{int}(D^u_x(r))\times \mathrm{int}(D^s_x(r)) = \{ (p,q) \in W^u(x) \times W^s(x) \mid \tau^u(p) < 0 < \tau^s(q) \}
\]
we deduce that $\Omega$ has the form
\begin{equation}
\label{formomega}
\Omega = \{ (p,q) \in W^u(x) \times W^s(x) \mid \tau^u(p) < \tau^s(q) \}.
\end{equation}

The homeomorphism (\ref{homeo}) is in general not differentiable. However, the fact that $\mathscr{L}^u$ is a smooth foliation outside $W^u(x)$ and that each leaf of $
\mathscr{L}^s$ is smooth implies that for every $p\in W^u(x)$ the restriction
\[
q \mapsto [p,q]
\]
is a smooth embedding from the punctured disk 
\[
\{ q\in W^s(x) \setminus \{x\} \mid (p,q)\in \Omega\} = \{ q\in W^s(x) \mid \tau^u(p) < \tau^s(q) < +\infty\}
\]
into $M$. Moreover, the fact that $\mathscr{L}^s$ is a partially smooth foliation implies that the differentials of every order in the variable $q\in W^s(x)$ of the map (\ref{homeo}) are continuous on $\Omega \setminus (W^u(x) \times \{x\})$.

Building on the above facts, it is now easy to show that the stable leaves $L^s(p)$ can be further foliated into spheres of codimension one that are centered at the based point $p\in W^u(x)$. More precisely, we have the following result.

\begin{prop}
\label{folinsphe}
Let $x$ be a stationary point of index $k$, let $\pi^u: U \rightarrow W^u(x)$ be an invariant tubular neighborhood of $W^u(x)$ and let 
\[
\mathscr{L}^s = \{L^s(p)\}_{p\in W^u(x)}
\]
be the corresponding stable foliation, whose leaves have dimension $n-k$. Then there is an invariant neighborhood $V\subset U$ of $x$ and an invariant partially smooth foliation
\[
\mathscr{S}^s = \{ S^s(p,a)\}_{(p,a)\in \Lambda}
\]
of $V\setminus W^u(x)$ into $(n-k-1)$-dimensional spheres that refines the foliation $\mathscr{L}^s$ and is $\mathscr{L}^s$-smooth. Here, $\Lambda$ is an open neighborhood of $\{x\} \times \R$ in $W^u(x)\times \R$ and the invariance property reads
\begin{equation}
\label{invafoli}
\phi^t(S^s(p,a)) = S^s(\phi^t(p),a+t), \qquad \forall (p,a) \in \Lambda, \; t\in \R.
\end{equation}
Moreover, there is a one-parameter family of invariant open neighborhoods $V(r)$, $r\in (0,1]$, of $x$ such that:
\begin{enumerate}[(i)]
\item $V(1)= V$;
\item $\overline{V(s)} \subset V(r) \cup \overline{W^u(x)\cup W^s(x)}$ if $0<s<r\leq 1$;
\item $\displaystyle \bigcap_{r\in (0,1]} V(r) = W^u(x) \cup W^s(x)$;
\item each leaf of $\mathscr{S}^s$ is either contained in $V(r)$ or disjoint from it, for every $r\in (0,1]$.
\end{enumerate} 
\end{prop}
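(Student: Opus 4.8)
The plan is to exploit the product structure established in the paragraph preceding the statement. First we must supply the missing ingredient: as sketched at the beginning of this section, there is an invariant tubular neighborhood $\pi^s:U'\to W^s(x)$ whose unstable foliation $\mathscr{L}^u$ is smooth outside $W^u(x)$; together with the given $\mathscr{L}^s$ this puts us exactly in the setting of that discussion, so we obtain the homeomorphism $F:\Omega\to V$, $(p,q)\mapsto[p,q]$, conjugating $\phi^t\times\phi^t$ to $\phi$, where $\Omega=\{(p,q)\in W^u(x)\times W^s(x):\tau^u(p)<\tau^s(q)\}$, and with the regularity recorded there: for each fixed $p$ the map $q\mapsto[p,q]$ is a smooth embedding, and all differentials of $F$ in the variable $q\in W^s(x)$ are continuous on $\Omega\setminus(W^u(x)\times\{x\})$. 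Recall also that $\tau^s$ is continuous on $W^s(x)$, smooth and submersive on $W^s(x)\setminus\{x\}$, satisfies $\tau^s\circ\phi^t=\tau^s+t$, and has level sets $(\tau^s)^{-1}(a)=\phi^a(\partial D^s_x(\rho))$, each diffeomorphic to the $(n-k-1)$-sphere.

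I would then define $\mathscr{S}^s$ by $S^s(p,a):=F(\{p\}\times(\tau^s)^{-1}(a))$ on the parameter set $\Lambda:=\{(p,a)\in W^u(x)\times\R:\tau^u(p)<a\}$, which is an open neighborhood of $\{x\}\times\R$ since $\tau^u(x)=-\infty$; then $V\setminus W^u(x)=F\bigl(\Omega\setminus(W^u(x)\times\{x\})\bigr)$ is partitioned by these leaves, the invariance $\phi^t(S^s(p,a))=S^s(\phi^t p,a+t)$ is immediate from the conjugacy, each $S^s(p,a)$ is the image of a sphere under the smooth embedding $q\mapsto[p,q]$ and is contained in the single $\mathscr{L}^s$-leaf $L^s(p)$, so $\mathscr{S}^s$ refines $\mathscr{L}^s$. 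For partial smoothness, at $w_0=F(p_0,q_0)$ with $q_0\neq x$ I would use the submersion normal form for $\tau^s$ to choose a smooth chart $\beta(\theta,a)$ of $W^s(x)$ near $q_0$ with $\tau^s(\beta(\theta,a))=a$ and a smooth chart $\alpha(z)$ of $W^u(x)$ near $p_0$; then $(\theta;z,a)\mapsto F(\alpha(z),\beta(\theta,a))$ is a homeomorphism onto a neighborhood of $w_0$, smooth in $\theta$ with all $\theta$-derivatives continuous and with injective $\theta$-differential, hence a partially smooth parametrization whose leaves are the $S^s(\cdot,\cdot)$. For $\mathscr{L}^s$-smoothness it is enough to check that $w\mapsto T_wS^s(w)$ is $\mathscr{L}^s$-smooth; reading it through the parametrizations $\varphi(y,z)=F(\alpha(z),\beta(y))$ of $\mathscr{L}^s$ (which are partially smooth parametrizations by the same $q$-regularity of $F$) it becomes $(y,z)\mapsto d_qF_{(\alpha(z),\beta(y))}\bigl(\ker d\tau^s_{\beta(y)}\bigr)$, smooth in $y$ with continuous $y$-derivatives since $d_qF$ and $\ker d\tau^s$ are.

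It remains to build the $V(r)$. Since $\tau^s-\tau^u$ is constant on each set $\{p\}\times(\tau^s)^{-1}(a)$, the function
\[
\kappa:V\longrightarrow(0,+\infty],\qquad \kappa(w):=\tau^s(q)-\tau^u(p)\ \text{ for }w=F(p,q),
\]
is continuous, $\phi$-invariant, constant on the leaves of $\mathscr{S}^s$, and equals $+\infty$ exactly on $W^u(x)\cup W^s(x)$. I would pick a continuous strictly decreasing $\mu:(0,1]\to[0,+\infty)$ with $\mu(1)=0$ and $\mu(r)\to+\infty$ as $r\to0^+$, and set $V(r):=\{w\in V:\kappa(w)>\mu(r)\}$. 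This is open, invariant, a neighborhood of $W^u(x)\cup W^s(x)$; $V(1)=V$; $\bigcap_r V(r)=\{\kappa=+\infty\}=W^u(x)\cup W^s(x)$; and each leaf of $\mathscr{S}^s$, on which $\kappa$ is constant, is contained in or disjoint from each $V(r)$ — giving (i), (iii), (iv). For (ii) the key is the structure of $\overline V\setminus V$: analysing a sequence $F(p_n,q_n)\to w\notin V$ and passing to subsequences with $(p_n,q_n)\to(p_*,q_*)\in\overline{W^u(x)}\times\overline{W^s(x)}$, one shows — using that $\tau^u(p_n)\to+\infty$ forces $q_n\to x$, the flat local stable/unstable manifolds and orbit-convexity of the boxes $D_x(2\rho)$, $\phi$-invariance, and compactness of $\overline{W^u(x)}$, $\overline{W^s(x)}$ — that $w$ lies in $\overline{W^u(x)\cup W^s(x)}$ or on the $\phi$-orbit of a ``corner point'' $[\bar p,\bar q]$ with $\bar p\in\partial D^u_x(\rho)$, $\bar q\in\partial D^s_x(\rho)$; on such orbits $\kappa\equiv0$, and the same local analysis shows a corner-orbit point cannot be a limit of points with $\kappa>\mu(s)>0$. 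Hence $\overline{V(s)}\subset V\cup\overline{W^u(x)\cup W^s(x)}$, and for $w\in\overline{V(s)}\setminus\overline{W^u(x)\cup W^s(x)}\subset V$ continuity of $\kappa$ gives $\kappa(w)\ge\mu(s)>\mu(r)$, i.e.\ $w\in V(r)$.

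The main obstacle is precisely this last point: the regularity verifications for $\mathscr{S}^s$ are routine once $F$ and its $q$-regularity are in hand, and the family $V(r)$ is produced cleanly by the invariant function $\kappa$, but identifying $\overline V\setminus V$ — showing that its only points outside $\overline{W^u(x)\cup W^s(x)}$ are the harmless corner orbits — requires a careful forward/backward shadowing argument near $x$, iterated along the flow using the Morse–Smale structure.
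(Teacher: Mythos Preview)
Your approach coincides with the paper's. The foliation $\mathscr{S}^s$ is built exactly as in the paper by pushing the $\tau^s$-level spheres of $W^s(x)\setminus\{x\}$ through the product structure, and your $V(r)=\{\kappa>\mu(r)\}$ is precisely the paper's $V(r)=\{[p,q]:\tau^u(p)<\tau^s(q)+\log r\}$ upon taking $\mu(r)=-\log r$. Your verification of partial smoothness and $\mathscr{L}^s$-smoothness via the $q$-regularity of the product map is correct and more explicit than what the paper actually writes.

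The only divergence is your treatment of (ii). The paper simply declares that (i)--(iv) ``immediately follow'' from the formula for $V(r)$ and gives no boundary analysis; you instead single (ii) out as the main obstacle and propose to classify $\overline V\setminus V$. Your instinct that something must be said is fair, but the dichotomy you assert --- that every point of $\overline V\setminus V$ lies either in $\overline{W^u(x)\cup W^s(x)}$ or on a corner orbit --- does not follow from the purely local ingredients you list (flat invariant manifolds, orbit-convexity of the boxes). In the delicate case $\tau^u(p_n)\to+\infty$ (hence $q_n\to x$), continuity of $\pi^s$ gives $w\in(\pi^s)^{-1}(x)=W^u(x)$ only when $w$ stays inside the domain $U'$ of the unstable foliation; for $w\in\partial U'$ one needs a genuinely global input, namely that the box around $x$ was taken small enough to miss $W^s(y)$ for every stationary $y\notin\overline{W^u(x)}$ --- which follows from the Morse--Smale order duality $y\in\overline{W^u(x)}\Leftrightarrow x\in\overline{W^s(y)}$ and finiteness of stationary points --- so that forward orbits through the box can only accumulate on $\overline{W^u(x)}$. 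Your final sentence gestures toward this (``shadowing\ldots using the Morse--Smale structure''), but the actual mechanism is this order-theoretic neighborhood separation rather than a shadowing lemma; once it is in place, the rest of your argument for (ii) goes through.
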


\begin{proof}
Let $\pi^s: U' \rightarrow W^u(x)$ be a tubular neighborhood of $W^s(x)$ such that the associated unstable foliation
\[
\mathscr{L}^u = \{L^u(q)\}_{q\in W^s(x)}
\]
is smooth on $U'\setminus W^u(x)$. Let 
\[
[\cdot,\cdot]: \Omega \rightarrow V
\]
be the product structure that the foliations $\mathscr{L}^u$ and $\mathscr{L}^s$ induce on an open invariant subset $V$ of $U\cap U'$ containing $x$, as in the considerations above. Denote by $\tau^u$ and $\tau^s$ the functions that are defined in (\ref{tauu}) and (\ref{taus}).

We consider a smooth invariant foliation of $W^s(x)\setminus \{x\}$ into $(n-k-1)$-dimensional spheres that are transverse to the flow, such as for instance
\[
S^s(x,a) := \phi^a ( \partial D^s_x(r) ) = \{ q\in W^s(x) \mid \tau^s(q) = a\}, \quad a\in \R.
\]
By using the product structure, we extend it to a foliation of $V\setminus W^u(x)$:
\begin{equation}
\label{leafS}
\mathscr{S}^s := \{S^s(p,a)\}_{(p,a)\in \Lambda}, \qquad 
S^s(p,a) := \{ [p,q] \mid q\in S^s(x,a)\} = \{ [p,q] \mid \tau^s(q)=a\},
\end{equation}
where
\[
\Lambda := \{ (p,a) \in W^u(x) \times \R \mid \tau^u(p) < a \} 
\]
is an open neighborhood of $\{x\}\times \R$ in $W^u(x)\times \R$. The regularity properties of the product structure imply that $\mathscr{S}^s$ is a partially smooth foliation and is $\mathscr{L}^s$-smooth. The conjugacy property of the product structure implies that the foliation $\mathscr{S}^s$ has the invariant property (\ref{invafoli}).

By using the representation (\ref{formomega}) for $\Omega$, we can define the fundamental system of neighborhoods $V(r)$, $r\in (0,1]$, of $W^u(x) \cup W^s(x)$ simply by
\[
V(r) := \{ [p,q] \mid \tau^u(p) < \tau^s(q) + \log r \}.
\]
Properties (i)-(iv) immediately follow.
\end{proof}

We conclude this section by constructing a suitable function on the invariant set $V$ of the above proposition which will be useful in Section \ref{finalsec}.

\begin{prop}
\label{coerfu}
Keeping the notations of Proposition \ref{folinsphe}, there exists a continuous function
\[
\rho: V \rightarrow [0,+\infty)
\]
with the following properties:
\begin{enumerate}[(i)]
\item $\rho^{-1}(\{0\}) = W^u(x)$.
\item $\rho$ diverges at every point in $\partial V\setminus \overline{W^u(x)}$.
\item $\rho$ is constant on each leaf of $\mathscr{S}^s$.
\item The restriction of $\rho$ to $V\setminus W^u(x)$ is $\mathscr{L}^s$-smooth. Moreover, if the foliation $\mathscr{L}^s$ is $\mathscr{L}'$-smooth with respect to some other foliation $\mathscr{L}'$ that is refined by $\mathscr{L}^s$, then $\rho|_{V\setminus W^u(x)}$ is $\mathscr{L}'$-smooth.
\item The restriction of $\rho$ to each punctured leaf $L^s(p)\setminus \{p\}$, $p\in W^u(x)$, has no critical points.
\item $\rho$ is differentiable along the flow $\phi$ and the identity
\[
\frac{d}{dt} \rho\circ \phi^t \Bigr|_{t=0} = - \rho
\]
holds on $V$.
\end{enumerate}
\end{prop}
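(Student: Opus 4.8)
Here is how I would prove Proposition~\ref{coerfu}, keeping the notation of Proposition~\ref{folinsphe}.

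The plan is to write $\rho$ down explicitly by means of the product structure $[\cdot,\cdot]\colon\Omega\to V$ and the retractions $\pi^u$ and $\pi^s$ of the two invariant tubular neighborhoods used to build it, so that $\pi^u([p,q])=p$ and $\pi^s([p,q])=q$. On $V\setminus W^u(x)$ the point $\pi^s(v)$ is never $x$, so $\tau^s(\pi^s(v))\in\R$; putting $a:=\tau^s\circ\pi^s$ and $b:=\tau^u\circ\pi^u\in[-\infty,+\infty)$, I would set
\[
\rho(v):=e^{-a(v)}\Bigl(1+\frac{1}{a(v)-b(v)}\Bigr)\quad\text{for }v\in V\setminus W^u(x),\qquad \rho:=0\quad\text{on }W^u(x).
\]
Since $a-b=\tau^s\circ\pi^s-\tau^u\circ\pi^u>0$ on $V\setminus W^u(x)$, with value $+\infty$ exactly on $W^s(x)$, $\rho$ is positive there; note also that $\{a-b>-\log r\}=V(r)$. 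Property (vi) is immediate: $\tau^u$ and $\tau^s$ are shifted by $t$ along $\phi^t$, so $a\circ\phi^t=a+t$ and $(a-b)\circ\phi^t=a-b$, whence $\rho\circ\phi^t=e^{-t}\rho$. Property (iii) is immediate as well: $\tau^u\circ\pi^u$ is constant along the leaves of $\mathscr{L}^s$, hence along those of the refinement $\mathscr{S}^s$, while $\tau^s\circ\pi^s$ equals the constant $\alpha$ on the leaf $S^s(p,\alpha)$.

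For continuity and (i): on $V\setminus W^u(x)$ the maps $\pi^u,\pi^s,\tau^u,\tau^s$ are continuous and $(\beta,\alpha)\mapsto e^{-\alpha}(1+(\alpha-\beta)^{-1})$ is continuous on $\{\beta<\alpha\}\subseteq[-\infty,+\infty)\times\R$, tending to $e^{-\alpha}$ as $\beta\to-\infty$; hence $\rho$ is continuous there. If $v_n\to v_0\in W^u(x)$ with $v_n\in V\setminus W^u(x)$, the homeomorphism property of $[\cdot,\cdot]$ gives $\pi^s(v_n)\to x$ and $\pi^u(v_n)\to v_0$ in $W^u(x)$, so $a(v_n)\to+\infty$ and $a(v_n)-b(v_n)\to+\infty$, and therefore $\rho(v_n)\to0$. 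Thus $\rho$ is continuous and $\rho^{-1}(\{0\})=W^u(x)$, which is (i).

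Property (ii) is the delicate point and the main obstacle, because it requires controlling $\partial V$. Let $v\in\partial V\setminus\overline{W^u(x)}$ and $v_n=[p_n,q_n]\to v$; passing to a subsequence, $p_n\to p_*$ and $q_n\to q_*$ in $M$. Since $\rho(v_n)\ge e^{-a(v_n)}$, it suffices to show that either $\tau^s(q_n)\to-\infty$, or $(a-b)(v_n)\to0$ while $a(v_n)$ stays bounded (so that $e^{-a(v_n)}(a(v_n)-b(v_n))^{-1}\to+\infty$). One argues by cases on the strata of $p_*$ and $q_*$. If $p_*\in W^u(x)$ and $q_*\in W^s(x)$, then $(p_*,q_*)\notin\Omega$ — otherwise $v=[p_*,q_*]\in V$ — so $\tau^u(p_*)=\tau^s(q_*)\in\R$ and $(a-b)(v_n)\to0$ with $a(v_n)\to\tau^s(q_*)$. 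If $q_*$ lies in a higher stratum of $\overline{W^s(x)}$, then $\tau^s(q_n)\to-\infty$, by the stable counterpart of the statement that $\tau^u\to+\infty$ along $\partial\overline{W^u(x)}$ (which holds because $\{\tau^u\le C\}=\phi^C(D^u_x(r))$ is a compact subset of $W^u(x)$). The remaining case, $p_*$ in a lower stratum of $\overline{W^u(x)}$, forces $\tau^u(p_n)\to+\infty$, hence $\tau^s(q_n)\to+\infty$ and $q_n\to x$; here one must exclude $v\notin\overline{W^u(x)}$, which one does by flowing $v_n$ back by $\tau^s(q_n)$ to the compact ``cross section'' consisting of points $[\,p',q'\,]$ with $p'\in D^u_x(r)$ and $q'\in\partial D^s_x(r)$, extracting a limit point $w$, and using that the forward orbit of any point of $V$ stays in a fixed $V(r_0)$ with $r_0<1$, so that $v_n\in V(r_0)$ for large $n$ and hence, by the nesting property of the $V(r)$ in Proposition~\ref{folinsphe}, $v\in\overline{W^u(x)\cup W^s(x)}$; a Lyapunov-function estimate (a value of the Lyapunov function along $\phi^s(w)$ tends to its value at $x$ as $s\to+\infty$ when $w$ lies on a stable manifold) then rules out $v\in\overline{W^s(x)}\setminus\overline{W^u(x)}$. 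Carrying out this last case — equivalently, pinning down $\partial V$ — is where the work lies.

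Finally, (v) is a direct computation: along $L^s(p)\setminus\{p\}$ one has $\rho=h\circ(\tau^s\circ\pi^s)$ with $h(\alpha)=e^{-\alpha}(1+(\alpha-\tau^u(p))^{-1})$ and $h'(\alpha)=-e^{-\alpha}\bigl(1+(\alpha-\tau^u(p))^{-1}+(\alpha-\tau^u(p))^{-2}\bigr)\ne0$ for $\alpha>\tau^u(p)$; since $\tau^s$ has no critical point on $W^s(x)\setminus\{x\}$ and $\pi^s$ restricts to a smooth embedding of $L^s(p)\setminus\{p\}$ onto its image, $d\rho$ never vanishes on $L^s(p)\setminus\{p\}$. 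For (iv), write $\rho|_{V\setminus W^u(x)}=G\circ(\tau^u\circ\pi^u,\tau^s\circ\pi^s)$ with $G(\beta,\alpha)=e^{-\alpha}(1+(\alpha-\beta)^{-1})$ smooth on $\{\beta<\alpha\}$. Here $\tau^u\circ\pi^u$ is constant along the leaves of $\mathscr{L}^s$, hence $\mathscr{L}^s$-smooth, while $\tau^s\circ\pi^s$ is $\mathscr{L}^s$-smooth by the regularity of the product structure (all its differentials in the $W^s(x)$-variable are continuous on $\Omega\setminus(W^u(x)\times\{x\})$); this yields the $\mathscr{L}^s$-smoothness of $\rho$. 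If in addition $\mathscr{L}^s$ is $\mathscr{L}'$-smooth, then inside each leaf of $\mathscr{L}'$ the foliation $\mathscr{L}^s$ is an honest smooth foliation, along whose leaves $\pi^u$ is the projection onto an open subset of $W^u(x)$ and transverse to which $\pi^s$ is smooth, so the same factorization through $G$ shows that $\rho|_{V\setminus W^u(x)}$ is $\mathscr{L}'$-smooth.
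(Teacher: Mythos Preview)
Your approach is essentially the paper's: write $\rho$ as an explicit function of $\tau^u\circ\pi^u$ and $\tau^s\circ\pi^s$ via the product structure $[\cdot,\cdot]:\Omega\to V$. The paper uses
\[
\tilde\rho(p,q)=\exp\bigl(-\tau^s(q)+\sigma(\tau^s(q)-\tau^u(p))\bigr)
\]
with an auxiliary smooth $\sigma:(0,\infty)\to[0,\infty)$ satisfying $\sigma(s)\to+\infty$ as $s\to0$, $\sigma\equiv0$ on $[1,\infty)$, $\sigma'\le0$; your choice $e^{-a}(1+(a-b)^{-1})$ has the same qualitative behaviour, and your verifications of (i), (iii), (v), (vi) are the same as the paper's, only with a different $\sigma$-like term.

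The one substantive difference is in (ii). The paper disposes of (ii) in one line: if $(p_h,q_h)\to(p_*,q_*)\in\partial\Omega\subset W^u(x)\times W^s(x)$, then $\tau^s(q_h)-\tau^u(p_h)\to0$ while $\tau^s(q_h)$ stays bounded, so $\tilde\rho\to+\infty$. You try to be more thorough and analyse what happens when $(p_n,q_n)$ leaves $W^u(x)\times W^s(x)$, which is commendable, but your treatment of the ``remaining case'' ($\tau^u(p_n)\to+\infty$, hence $\tau^s(q_n)\to+\infty$) does not work as written. The assertion that ``the forward orbit of any point of $V$ stays in a fixed $V(r_0)$ with $r_0<1$'' is false: each $V(r)$ is invariant, so a point in $V\setminus V(r_0)$ never enters $V(r_0)$. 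The Lyapunov-function step at the end is also too vague to carry the conclusion.

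A cleaner way to handle this case is to use the retraction $\pi^s$ directly. In the remaining case $q_n=\pi^s(v_n)\to x$. If the limit $v$ lies in $U'$ (the domain of $\pi^s$), continuity gives $\pi^s(v)=x$, i.e.\ $v\in L^u(x)=W^u(x)\subset\overline{W^u(x)}$, so this case contributes nothing to $\partial V\setminus\overline{W^u(x)}$ and (ii) is unaffected. Symmetrically, if $v\in U$ then $\pi^u(v_n)=p_n\to\pi^u(v)\in W^u(x)$, forcing the sequence back into the situation the paper treats. What remains is the possibility $v\in\partial V\setminus(U\cap U')$; this is the point where one must argue (or arrange, by choice of $U'$) that such boundary points lie in $\overline{W^u(x)\cup W^s(x)}$. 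The paper does not spell this out either; for the application in Section~6 only the relative boundary of $V\cap W^u(z)$ inside $W^u(z)$ matters, and there the retractions are available.
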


\begin{proof}
The proof makes again use of the product structure (\ref{homeo}) on $V$ and the functions $\tau^u$ and $\tau^s$ that have been introduced before Proposition \ref{folinsphe}. Let $\sigma: (0,+\infty) \rightarrow [0,+\infty)$ be a smooth function such that
\[
\lim_{s\rightarrow 0} \sigma(s) = +\infty, \qquad \sigma(s) = 0 \;\; \forall s\geq 1, \qquad \sigma'(s) \leq 0\;\; \forall s>0.
\]
We extend $\sigma$ to a continuous function on $[0,+\infty]$ by setting
\[
\sigma(0):= +\infty \qquad \mbox{and} \qquad \sigma(+\infty) = 0.
\]
By (\ref{formomega}), the difference $\tau^s(q) - \tau^u(p)$ is positive on $\Omega$, so the function
\[
\tilde{\rho} : \Omega \rightarrow [0,+\infty), \qquad \tilde{\rho} (p,q) = \exp \bigl( - \tau^s(q) + \sigma ( \tau^s(q) - \tau^u(p) ) \bigr).
\]
is well defined and continuous. We define the continuous function $\rho: V \rightarrow  [0,+\infty)$ by using the product structure:
\[
\rho([p,q]) = \tilde{\rho}(p,q).
\]
The argument of the exponential function defining $\tilde{\rho}$ is $-\infty$ if and only if $\tau^s(q)=+\infty$, that is, if and only if $q=x$. This shows that the non-negative function $\tilde{\rho}$ vanishes precisely at $W^u(x) \times \{x\}$, and hence $\rho$ satisfies (i). If a sequence $(p_h,q_h)\subset \Omega$ converges to a point in $\partial \Omega$ then  $(\tau^s(q_h)-\tau^u(p_h))$ is infinitesimal and $(\tau^s(q_h))$ converges (see again (\ref{formomega})). By the properties of $\sigma$, $\tilde{\rho}(p_n,q_n)$ tends to $+\infty$ and hence $\rho$ satisfies (ii). The fact that $\tilde{\rho}$ is a function of $\tau^u$ and $\tau^s$ and the form (\ref{leafS}) of the leaves of $\mathscr{S}^s$ implies (iii). The function $\tilde{\rho}$ is smooth on $\Omega \setminus (W^u(x) \times \{x\})$, so the regularity of the product structure implies that $\rho$ is $\mathscr{L}^s$-smooth on $V \setminus W^u(x)$, as claimed in (iv). For the same reasons, this function inherits any further smoothness property that the foliation $\mathscr{L}^s$ might have. For every $p\in W^u(x)$, the punctured leaf $L^s(p)\setminus \{p\}$ is the image by the product structure of the $(n-k)$-dimensional punctured disk
\begin{equation}
\label{pd}
\{p\} \times \{q\in W^s(x) \mid \tau^u(p) < \tau^s(q) < +\infty\},
\end{equation}
which is foliated by the one-parameter family of spheres
\[
\{p\} \times \{q\in W^s(x) \mid \tau^s(q) = a\}, \qquad a\in (\tau^u(p) , +\infty).
\]
The value of $\tilde{\rho}$ on each of the above spheres is
\[
\exp( - a + \sigma(a-\tau^u(p)) )
\]
and differentiation in $a$ yields
\[
(-1+\sigma'(a-\tau^u(p))) \exp( - a + \sigma(a-\tau^u(p)) ).
\]
Since $\sigma'\leq 0$, the above number is strictly negative. This implies that the restriction of $\tilde{\rho}$ to the punctured disk (\ref{pd}) has no critical points and hence (v) holds. By using the properties (\ref{tauu}) and (\ref{taus}) of the functions $\tau^u$ and $\tau^s$ we obtain
\[
\tilde{\rho}(\phi^t(p),\phi^t(q)) = \exp \bigl( - \tau^s(q) - t + \sigma ( \tau^s(q) - \tau^u(p) ) \bigr),
\]
so differentiation in $t$ yields
\[
\frac{d}{dt} \tilde{\rho}(\phi^t(p),\phi^t(q)) \Bigr|_{t=0} = - \exp \bigl( - \tau^s(q)  + \sigma ( \tau^s(q) - \tau^u(p) ) \bigr) = - \tilde{\rho}(p,q).
\]
Then (vi) follows from the fact that the product structure conjugates the diagonal flow $\phi^t\times \phi^t$ on $\Omega$ with the flow $\phi$ on $V$.
\end{proof}

\begin{rem}
\label{fund}
Statement (i) of this proposition, together with the continuity of $\pi^u$ and $\rho$, implies that for every $p\in W^u(x)$ and every neigborhood $N$ of $p$ in $M$ we can find an open neighborhood $P$ of $p$ in $W^u(x)$ and a positive number $\epsilon$ such that the open set
\[
\rho^{-1}([0,\epsilon)) \cap \bigcup_{p'\in P} L^s(p') 
\]
is contained in $N$.
\end{rem}

\section{Stable foliations with the refinement property} 
\label{refsec}

The main result of this section is the following existence result for stable foliations $\mathscr{L}^s_x$ for each stationary point $x$ with some further compatibility properties, saying that $\mathscr{L}^s_x$ is refined by a foliation $\mathscr{S}^s_x$ into spheres, as in Proposition \ref{folinsphe}, and that when $\mathrm{ind}(x) \geq \mathrm{ind}(y)$ the foliation $\mathscr{L}^s_x$ refines the foliation $\mathscr{S}^s_y$.

\begin{thm}
\label{fitncpar}
For every stationary point $x$ there are:
\begin{enumerate}[(i)]
\item An invariant tubular neighborhood
\[
\pi_x^u : U_x \rightarrow W^u(x)
\]
of $W^u(x)$ with associated stable foliation
\[
\mathscr{L}^s_x := \{L^s_x(p\}_{p\in W^u(x)}.
\]
Here, $U_x$ is contained in $\phi( \R \times D_x(1))$.
\item A partially smooth foliation $\mathscr{S}^s_x$ of $U_x\setminus W^u(x)$ into $(n-k-1)$-dimensional spheres, where $k=\ind(x)$, that refines $\mathscr{L}^s_x$ and is $\mathscr{L}^s_x$-smooth. Each leaf of $\mathscr{S}^s_x$ is transverse to the unstable manifold of every stationary point of $\phi$. 
\item A one-parameter family $\{U_x(r)\}_{r\in (0,1]}$ of invariant open neighborhoods of $x$ such that 
\[
\begin{split}
U_x(1)=U_x, \quad & \overline{U_x(s)} \subset U_x(r) \cup \overline{W^u(x) \cup W^s(x)} \quad \mbox{ for all } 0<s<r\leq 1, \\ &
\bigcap_{r\in (0,1]} U_x(r) = W^u(x) \cup W^s(x),
\end{split}
\]
and for every $r\in (0,1]$ each leaf of $\mathscr{S}^s_x$ is either fully contained in or disjoint from $U_x(r)$.
\end{enumerate}
Moreover, these objects satisfy the following compatibility condition: 
\begin{enumerate}[(i)]
\setcounter{enumi}{3}
\item If $x$ and $y$ are stationary points with $\mathrm{ind}(x) \geq \mathrm{ind}(y)$, then the foliation $\mathscr{L}^s_x$ refines the foliation $\mathscr{S}^s_y$ and is $\mathscr{S}^s_y$-smooth.
\end{enumerate}
\end{thm}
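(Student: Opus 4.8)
I would argue by induction on the Morse index, handling the stationary points one at a time. Fix an ordering $x_1,\dots,x_N$ of $\mathrm{stat}(\phi)$ with $\mathrm{ind}(x_1)\le\cdots\le\mathrm{ind}(x_N)$ and construct the data $(U_{x_j},\{U_{x_j}(r)\},\mathscr{L}^s_{x_j},\mathscr{S}^s_{x_j})$ for $j=1,\dots,N$, keeping as inductive hypothesis that (i)--(iii) hold for the first $j$ points and that the compatibility (iv) holds for every pair among them. The whole construction is performed inside the tubes $\phi(\R\times D_x(1))$, so that (MS-2) guarantees that no orbit of $\phi$ meets both $D_x(1)$ and $D_y(1)$ when $\mathrm{ind}(x)=\mathrm{ind}(y)$ and $x\neq y$; consequently $U_x\cap U_y=\emptyset$ for stationary points of equal index, and (iv) only needs to be verified when $\mathrm{ind}(x)>\mathrm{ind}(y)$. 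The base case $\mathrm{ind}(x)=0$ is immediate: $W^u(x)=\{x\}$ and $W^s(x)$ is open, so $U_x$ is any small invariant neighbourhood of $x$ inside $W^s(x)$, $\mathscr{L}^s_x$ is the one-leaf foliation $\{U_x\}$, and $\mathscr{S}^s_x$ together with $\{U_x(r)\}$ are produced by Proposition \ref{folinsphe}; each leaf of $\mathscr{S}^s_x$ is a hypersurface transverse to the flow, hence transverse to every $\phi$-invariant submanifold, in particular to every $W^u(y)$.

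\textbf{Reduction to a germ along $W^u(x)$.} For the inductive step let $x=x_j$ with $k:=\mathrm{ind}(x)\ge1$. Applying Proposition \ref{stafol-prop} to the time-reversed flow, it is enough to construct a $\phi$-invariant $(n-k)$-dimensional partially smooth foliation $\mathscr{F}=\{L^s_x(q)\}_{q\in W^u(x)\setminus\{x\}}$ of an invariant neighbourhood $\tilde U_x$ of $W^u(x)\setminus\{x\}$ contained in $\phi(\R\times D_x(1))\setminus W^s(x)$, whose leaves are transverse to $W^u(x)$ at their base points, and such that for every $y$ with $\mathrm{ind}(y)<k$ the restriction of $\mathscr{F}$ to $\tilde U_x\cap U_y$ refines $\mathscr{S}^s_y$ and is $\mathscr{S}^s_y$-smooth. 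Indeed $U_x:=\tilde U_x\cup W^s(x)$, with $L^s_x(x):=W^s(x)$, is then an invariant tubular neighbourhood of $W^u(x)$ carrying the stable foliation $\mathscr{L}^s_x$; Proposition \ref{folinsphe}, applied to it and followed by the shrinking of $U_x$ to the smaller neighbourhood it provides, supplies $\mathscr{S}^s_x$ and $\{U_x(r)\}$; and (iv) for $x$ against the lower-index points is inherited by $\mathscr{S}^s_x$ from $\mathscr{L}^s_x$, since $\mathscr{S}^s_x$ refines $\mathscr{L}^s_x$.

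\textbf{Construction of the germ.} I would prescribe $\mathscr{F}$ through its germ along the compact fundamental domain $\Sigma:=\partial D^u_x(r_0)$ (with $0<r_0<1$ small), through which each orbit of $W^u(x)\setminus\{x\}$ passes exactly once, and then saturate by the flow; the constraints imposed by the foliations $\mathscr{S}^s_y$ must be read off after propagation along the flow, and the crux is their consistency and realisability. On the invariant locus of points whose $\phi$-orbit meets some $U_y$ with $\mathrm{ind}(y)=k-1$, the leaf $L^s_x(q)$ is \emph{forced}: being $(n-k)$-dimensional it must coincide with the leaf of $\mathscr{S}^s_y$ through $q$. This is unambiguous because the neighbourhoods of the index-$(k-1)$ points are pairwise disjoint and $\mathscr{S}^s_y$ is $\phi$-invariant; the foliation so obtained is partially smooth and invariant, it is transverse to $W^u(x)$ because the leaves of $\mathscr{S}^s_y$ are transverse to every unstable manifold by (ii), and on this locus it trivially refines $\mathscr{S}^s_y$ and is $\mathscr{S}^s_y$-smooth. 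On the locus where the orbit meets only neighbourhoods $U_y$ with $\mathrm{ind}(y)\le k-2$ the leaf of $\mathscr{S}^s_y$ through $q$ has dimension $>n-k$, so $L^s_x(q)$ must be chosen inside it; here I would proceed by decreasing Morse index of $y$, at each level selecting an $(n-k)$-plane field inside the leaves of $\mathscr{S}^s_y$ --- which makes sense because $\mathscr{S}^s_y$, being $\mathscr{L}^s_y$-smooth, restricts to a genuine smooth foliation of each leaf of $\mathscr{L}^s_y$ --- and extending the data already fixed at the higher levels. On the remainder of $\Sigma$ there is no constraint and one may take any local product chart transverse to $W^u(x)$. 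The local choices are welded together with a partition of unity over $\Sigma$: this is legitimate because, after representing the leaves near a point as graphs over a fixed distribution transverse to $W^u(x)$ --- possible with uniformly controlled slopes thanks to the lower-semicontinuity of transversality in Corollary \ref{uniftra} --- the requirements of being partially smooth, tangent to the nested leaves of the $\mathscr{S}^s_y$, and transverse to $W^u(x)$ become convex and open conditions on the graphs, stable under convex combination; the resulting plane field is integrable since inside each leaf of each $\mathscr{S}^s_y$ it is tangent to a smooth foliation, and partial smoothness at $x$ is supplied by the graph-transform convergence already built into Proposition \ref{stafol-prop}. Lastly, to obtain the transversality of each leaf of $\mathscr{S}^s_x$ to every $W^u(y)$ required in (ii), when applying Proposition \ref{folinsphe} I would choose the initial smooth invariant sphere foliation of $W^s(x)\setminus\{x\}$ so that on the compact sphere $\partial D^s_x(r)$ its spheres are transverse to every $W^u(y)\cap W^s(x)$ --- allowed by a general-position argument --- and propagate this transversality along the flow using Proposition \ref{serve} and Corollary \ref{uniftra}.

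\textbf{Main obstacle.} The hard part is the welding just described: one must join the forced pieces (over the index-$(k-1)$ neighbourhoods), the partially constrained pieces (over the lower-index neighbourhoods), and the free pieces into a single partially smooth foliation that is at the same time $\phi$-invariant, transverse to $W^u(x)$ along all of $W^u(x)\setminus\{x\}$, and compatible with --- refining and smooth over --- every $\mathscr{S}^s_y$, while respecting the flow-propagation of these constraints; maintaining control of all of these properties at once under a partition-of-unity construction is where the substance of the proof lies, and it is precisely there that the uniform transversality of Corollary \ref{uniftra} and the $C^k$-continuity and partial smoothness of the graph transform established in Proposition \ref{graph-transform} are indispensable.
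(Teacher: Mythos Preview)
Your inductive framework, base case, and use of Propositions \ref{stafol-prop} and \ref{folinsphe} match the paper's strategy. The substantive gap is in your ``Construction of the germ'' paragraph, specifically the claim that after representing candidate leaves as graphs over a fixed transverse distribution, the condition ``contained in the nested leaves of the $\mathscr{S}^s_y$'' becomes \emph{convex} in the graph and is therefore preserved under a partition-of-unity combination. This is false: the leaves of $\mathscr{S}^s_y$ are curved submanifolds, and ``$\mathrm{graph}(\sigma)$ lies in a leaf of $\mathscr{S}^s_y$'' is a nonlinear constraint on $\sigma$. Two graphs each lying in (possibly different) $\mathscr{S}^s_y$-leaves will in general have a convex combination that lies in neither. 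Your subsequent sentence about integrability of ``the resulting plane field'' suggests some conflation of two different objects (graphs versus distributions), but in either reading the refinement property is lost.

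The paper circumvents this by working with \emph{vector fields} rather than graphs. It first constructs (Lemma \ref{lemmulo}) an invariant Riemannian metric $g$ on a neighbourhood of $W^u(x)\setminus\{x\}$ with the special property that at each $p\in W^u(x)\cap U_y$ the $g$-orthogonal complement of $T_pW^u(x)$ already lies inside $T_pS^s_y(p)$. Starting from smooth vector fields $\tilde v_1,\dots,\tilde v_{n-k}$ spanning this complement along $W^u(x)$, it then sets
\[
v_j(p)=\sum_{y} \varphi_y(p)\,P_{T_pS^s_y(p)}\tilde v_j(p),
\]
where the partition of unity $\{\varphi_y\}$ is chosen so that on $U_y(1/2)$ only terms with $\mathrm{ind}(z)\ge\mathrm{ind}(y)$ survive. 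Since $\mathscr{S}^s_z$ refines $\mathscr{S}^s_y$ for such $z$, the tangent spaces are nested, $T_pS^s_z(p)\subset T_pS^s_y(p)$, and hence $v_j(p)\in T_pS^s_y(p)$ for \emph{every} relevant $y$. Tangency is a \emph{linear} condition at the level of vectors, so the convex combination is harmless here; the leaves $L^s_x(q)$ are then obtained by composing the flows of the $v_j$, which automatically stay inside each $\mathscr{S}^s_y$-leaf. This is exactly where your partition-of-unity idea must be relocated: it works on tangent vectors (or projectors), not on graphs of leaves.

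Two minor points. First, your assertion that on the index-$(k-1)$ locus $L^s_x(q)$ is forced to \emph{coincide} with the $\mathscr{S}^s_y$-sphere overshoots: equal dimension only forces $L^s_x(q)$ to be an open piece of that sphere, and indeed the paper's leaves are such pieces. Second, the transversality of the $\mathscr{S}^s_x$-spheres to every $W^u(y)$ is not obtained by a general-position choice of the initial sphere in $W^s(x)$; rather, the Morse--Smale hypothesis already gives it along $W^s(x)$, and the uniform-transversality Corollary \ref{uniftra} is then used to propagate it to a sufficiently small $U_x$.
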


In the case of two distinct stationary points $x$ and $y$ of the same index, the refinement property stated in (iv) implies that $U_x$ and $U_y$ are disjoint (this follows also from (MS-2), together with the fact that $U_x$ is contained in the flow evolution of $D_x(1)$). Together with (ii), the same property implies that  if $\mathrm{ind}(x) \geq \mathrm{ind}(y)$ then the foliation $\mathscr{L}^s_x$ refines the foliation $\mathscr{L}^s_y$ and is $\mathscr{L}^s_y$-smooth. 

The foliations appearing in Theorem \ref{fitncpar} are constructed inductively starting from the stationary points of index $0$. If $x$ is such a stationary point then $W^s(x)$ is an invariant open set, and we just set
\[
U_x = U_x(r)  := W^s(x) \quad \forall r\in (0,1], \qquad \pi^u_x(p):= x \quad \forall p\in W^s(x).
\] 
Then $(U_x,\pi_x^u)$ is an invariant tubular neighborhood of $W^u(x)=\{x\}$. The corresponding stable foliation has only the leaf, namely $W^s(x)$. The set $U_x\setminus W^u(x) = W^s(x)\setminus \{x\}$ has a smooth invariant foliation into embedded spheres that is obtained by choosing one smoothly embedded $(n-1)$-dimensional sphere $S^s_x\subset W^s(x)\setminus \{x\}$ that is transverse to the flow $\phi$ - for instance $S^s_x := \partial D^s_x(1)$ - and by letting it evolve by $\phi$. With this choices, the \\properties stated in (i), (ii) and (iii) hold trivially, wheras (iv) holds vacuously.

Arguing inductively, we now fix a natural number $0< k\leq n$, a set of rest points $Y$ which contains all the stationary points of index smaller than $k$ plus possibly some of index $k$, and we assume that for each $y\in Y$ we have defined the invariant tubular neighborhood $\pi^u_y : U_y \rightarrow W^u(y)$ with stable foliations $\mathscr{L}^s_y$, the foliations $\mathscr{S}^s_y$ and the one-parameter families of neighborhoods $\{U_y(r)\}_{r\in (0,1]}$ in such a way that the conditions stated in (i), (ii), (iii) and (iv) hold. We then fix a stationary point $x$ of index $k$ that is not in $Y$. Our aim is to prove the following result.

\begin{prop}
\label{laprop}
There are:
\begin{enumerate}[(i)]
\item An invariant tubular neighborhood
\[
\pi_x^u : U_x \rightarrow W^u(x)
\]
of $W^u(x)$ with associated stable foliation
\[
\mathscr{L}^s_x := \{L^s_x(p\}_{p\in W^u(x)}.
\]
Here, $U_x$ is contained in $\phi( \R \times D_x(1))$.
\item A partially smooth foliation $\mathscr{S}^s_x$ of $U_x\setminus W^u(x)$ into $(n-k-1)$-dimensional spheres, $k=\ind(x)$, that refines $\mathscr{L}^s_x$ and is $\mathscr{L}^s_x$-smooth. Each leaf of $\mathscr{S}^s_x$ is transverse to the unstable manifold of every stationary point of $\phi$. 

\item A one-parameter family $\{U_x(r)\}_{r\in (0,1]}$ of invariant open neighborhoods of $x$ such that 
\[
\begin{split}
U_x(1)=U_x, \quad & \overline{U_x(s)} \subset U_x(r) \cup \overline{W^u(x) \cup W^s(x)} \quad \mbox{ for all } 0<s<r\leq 1, \\ &
\bigcap_{r\in (0,1]} U_x(r) = W^u(x) \cup W^s(x),
\end{split}
\]
and for every $r\in (0,1]$ each leaf of $\mathscr{S}^s_x$ is either fully contained in or disjoint from $U_x(r)$.
\end{enumerate}
Moreover, these objects satisfy the following condition: 
\begin{enumerate}[(i)]
\setcounter{enumi}{3}
\item  For every $y\in Y$ the foliation $\mathscr{L}^s_x$ refines the restriction of the foliation $\mathscr{S}^s_y$ to $U_y(1/2)$ and is $\mathscr{S}^s_y|_{U_y(1/2)}$-smooth.
\end{enumerate}
\end{prop}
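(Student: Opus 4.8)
\emph{Plan of proof.} By reversing the direction of time, producing the invariant tubular neighbourhood $\pi^u_x : U_x \to W^u(x)$ with its stable foliation $\mathscr{L}^s_x$ is, in view of Proposition~\ref{stafol-prop} applied to the flow $t \mapsto \phi^{-t}$, equivalent to producing a $\phi$-invariant $(n-k)$-dimensional partially smooth foliation $\mathscr{F} = \{L(p)\}_{p \in W^u(x) \setminus \{x\}}$ of an invariant open neighbourhood $\tilde U$ of $W^u(x) \setminus \{x\}$ with $\tilde U \subset (M \setminus W^s(x)) \cap \phi(\R \times \mathrm{int}(D_x(1)))$ and each leaf $L(p)$ transverse to $W^u(x)$ at $p$. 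One then sets $U_x := \tilde U \cup W^u(x)$ and $L^s_x(x) := W^s(x)$, obtains $\mathscr{L}^s_x$ from Proposition~\ref{stafol-prop}, and derives the sphere foliation $\mathscr{S}^s_x$ together with the neighbourhoods $U_x(r)$ by feeding $(U_x,\mathscr{L}^s_x)$ into Proposition~\ref{folinsphe}. Everything therefore comes down to constructing $\mathscr{F}$ with the required refinement and smoothness properties relative to the foliations $\mathscr{S}^s_y$, $y \in Y$.

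\emph{Bookkeeping on the neighbourhoods $U_y$.} Let $Y_{<k} := \{ y \in Y : \ind(y) < k\}$ and enumerate it as $y_1, \dots, y_m$ with $\ind(y_1) \le \dots \le \ind(y_m) < k$. Using (MS-2), the inclusions $U_y \subset \phi(\R \times D_y(1))$ and the gradient-like property one checks: if $y \in Y$ with $\ind(y) = k$ then $U_x \cap U_y = \emptyset$, so (iv) is vacuous for such $y$; for $y \in Y_{<k}$ one has $U_y \cap W^s(x) = \emptyset$ and $W^u(x) \setminus \{x\} \subset \bigcup_{y \in Y_{<k}} U_y$, so $\tilde U$ may be taken inside $\bigl(\bigcup_{y \in Y_{<k}} U_y\bigr) \cap \phi(\R \times \mathrm{int}(D_x(1)))$, and the central leaf $W^s(x) = L^s_x(x)$ meets no $U_y(1/2)$ and hence refines all of them vacuously; thus the content of (iv) reduces to the behaviour of $\mathscr{F}$ relative to the $\mathscr{S}^s_{y_i}$ on $\tilde U$. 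Finally, for $y_i, y_j \in Y_{<k}$ with $\ind(y_j) \ge \ind(y_i)$ the inductive hypothesis gives, via (iv) for $Y$ and (ii) for $y_j$, that $\mathscr{S}^s_{y_j}$ refines $\mathscr{S}^s_{y_i}$ on the overlap of their domains; by transitivity a foliation that refines $\mathscr{S}^s_{y_j}$ there automatically refines $\mathscr{S}^s_{y_i}$, so it is the $y$'s of higher Morse index whose refinement constraint is binding.

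\emph{Construction of $\mathscr{F}$.} Each leaf of $\mathscr{S}^s_{y_i}$ is a sphere of dimension $n - \ind(y_i) - 1 \ge n-k$ transverse to $W^u(x)$, so inside such a sphere $S$ the set $S \cap W^u(x)$ has codimension $n-k$ and refining $\mathscr{S}^s_{y_i}$ amounts to choosing, compatibly as $S$ varies and $\phi$-equivariantly, a tubular-neighbourhood foliation of $S \cap W^u(x)$ inside $S$ by $(n-k)$-disks. We build $\mathscr{F}$ by peeling off $U_{y_m}, U_{y_{m-1}}, \dots, U_{y_1}$ in this order. On $U_{y_m}$ we produce a continuous $(n-k)$-plane distribution tangent to $\mathscr{S}^s_{y_m}$, transverse to $W^u(x)$, and $\mathscr{S}^s_{y}$-smooth for every $y \in Y_{<k}$ whose domain it meets, and integrate it leafwise inside the sphere leaves, where integrability is automatic. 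Inductively, suppose $\mathscr{F}$ has been defined on the invariant set obtained from $U_{y_m}, \dots, U_{y_{i+1}}$; over $U_{y_i}$ it must refine $\mathscr{S}^s_{y_i}$ on $U_{y_i}(1/2)$ while agreeing with the already-defined foliation where they overlap. On the collar $U_{y_i} \setminus U_{y_i}(1/2)$ one interpolates, \emph{within the leaves of $\mathscr{S}^s_{y_i}$}, between the tangent distribution of the old $\mathscr{F}$ — which on the overlap refines some $\mathscr{S}^s_{y_j}$ with $j > i$, hence $\mathscr{S}^s_{y_i}$ — and a distribution refining $\mathscr{S}^s_{y_i}$, by viewing both as graphs over a fixed complement inside $T\mathscr{S}^s_{y_i}$ and taking a cut-off convex combination of the graphing maps; the cut-off is chosen $\phi$-invariantly by means of the functions $\tau^u_{y_i}, \tau^s_{y_i}$ and $\rho_{y_i}$ of Propositions~\ref{folinsphe} and \ref{coerfu}. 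Since, by the inductive smoothness relations, $\mathscr{S}^s_{y_j}$ restricts to a smooth subfoliation of the $\mathscr{S}^s_{y_i}$-leaves, this interpolation produces a partially smooth foliation carrying the required smoothness; after the last step and an intersection with $\phi(\R \times \mathrm{int}(D_x(1)))$ we obtain $\tilde U$ and $\mathscr{F}$.

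\emph{Conclusion and main obstacle.} Proposition~\ref{stafol-prop} (in reversed time) turns $(\tilde U, \mathscr{F})$ into the invariant tubular neighbourhood $\pi^u_x : U_x \to W^u(x)$ with stable foliation $\mathscr{L}^s_x$, $U_x \subset \phi(\R \times D_x(1))$, and Proposition~\ref{folinsphe} applied to $(U_x, \mathscr{L}^s_x)$ yields $\mathscr{S}^s_x$, the family $\{U_x(r)\}$ and properties (i)--(iii); the refinement and $\mathscr{S}^s_y|_{U_y(1/2)}$-smoothness in (iv) follow from the corresponding properties of $\mathscr{F}$ together with the bookkeeping above. It remains to guarantee that every leaf of $\mathscr{S}^s_x$ is transverse to each $W^u(z)$; this is arranged by choosing the unstable foliation of $W^s(x)$ entering Proposition~\ref{folinsphe} — and the smooth foliation of $W^s(x) \setminus \{x\}$ into spheres used there — in general position with respect to the finitely many unstable manifolds, which is possible by Thom transversality, the uniform transversality of Corollary~\ref{uniftra} being used to control the intersections near the stationary points; transversality is then preserved by the flow. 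The delicate point of the whole argument is the construction of $\mathscr{F}$: carrying out the collar interpolations \emph{simultaneously} over all overlaps $U_{y_i} \cap U_{y_j}$ so as to obtain a single $\phi$-invariant partially smooth foliation that is transverse to $W^u(x)$, refines each $\mathscr{S}^s_{y_i}|_{U_{y_i}(1/2)}$, and is $\mathscr{S}^s_{y_i}$-smooth for \emph{every} $i$ — including the smallest indices, whose sphere leaves are the largest and for which smoothness is the strongest constraint — while at the same time keeping $\mathscr{S}^s_x$ transverse to all unstable manifolds.
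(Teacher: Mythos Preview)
Your reduction via Propositions~\ref{stafol-prop} and~\ref{folinsphe} to the construction of a $\phi$-invariant partially smooth foliation $\mathscr{F}$ of a punctured neighbourhood of $W^u(x)\setminus\{x\}$ is exactly the paper's strategy, and your bookkeeping on which $y$'s matter is correct. The gap is in the construction of $\mathscr{F}$ itself.

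You propose to build an $(n-k)$-plane distribution tangent to the sphere leaves of $\mathscr{S}^s_{y_m}$ and then ``integrate it leafwise inside the sphere leaves, where integrability is automatic.'' But the sphere leaves of $\mathscr{S}^s_{y_m}$ have dimension $n-\ind(y_m)-1$, which is strictly larger than $n-k$ unless $\ind(y_m)=k-1$; a generic $(n-k)$-plane distribution on such a leaf is \emph{not} integrable. The same problem recurs, more seriously, at your interpolation step: convex combinations of two integrable distributions are in general not integrable, so even if the input foliations existed, the cut-off interpolation would produce only a distribution, not a foliation. Your proposal never checks, and has no mechanism to enforce, the Frobenius condition.

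The paper sidesteps integrability entirely. It first builds (Lemma~\ref{lemmulo}) a $\phi$-invariant Riemannian metric $g$ for which the $g$-orthogonal complement of $T_pW^u(x)$ sits inside $T_pS^s_y(p)$ whenever $p\in U_y(3/4)$. It then takes smooth $\phi$-invariant vector fields $\tilde v_1,\dots,\tilde v_{n-k}$ spanning that complement along $W^u(x)$ and \emph{projects} them, via a partition of unity subordinate to the $U_y$'s, onto the tangent spaces of the sphere leaves: $v_j := \sum_y \varphi_y\, P_{TS^s_y}\tilde v_j$. The resulting vector fields are each tangent to every $\mathscr{S}^s_y|_{U_y(1/2)}$, so each of their flows preserves all those sphere leaves. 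The foliation is then obtained not by integrating a distribution but by taking the image of $\R^{n-k}\times\{p\}$ under the iterated composition $\phi_{v_{n-k}}^{t_{n-k}}\circ\cdots\circ\phi_{v_1}^{t_1}$. Because each factor preserves each $S^s_y$, so does the composition, and the refinement property follows with no integrability hypothesis; one only has to check (via the inverse function theorem along $W^u(x)$) that this iterated-flow map is a local homeomorphism. This is the missing idea in your proposal.

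A minor point: for the transversality of the leaves of $\mathscr{S}^s_x$ to all unstable manifolds, the paper does not need Thom transversality or general position. The leaves sitting in $W^s(x)$ are transverse to every $W^u(z)$ by the Morse--Smale assumption, and Corollary~\ref{uniftra} makes this transversality uniform, so one simply shrinks $U_x$ until all nearby leaves inherit it.
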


Theorem \ref{fitncpar} follows easily from successive applications of the above proposition. Indeed, at each step one replaces the invariant neighborhoods $U_y(r)$ by the smaller ones $U_y(r/2)$ for all $y\in Y$ and thus obtains statement (iv) of Theorem \ref{fitncpar} for all pairs of rest points in $Y\cup \{x\}$. The remaining part of this section is devoted to the proof of Proposition \ref{laprop}.

As in Section \ref{admsec}, we shall use the notation $L^s_y(p)$ in order to denote the leaf of the foliation $\mathscr{L}_y^s$ passing through $p\in U_y(1)$. This will cause no confusion with our choice of parametrizing the leaves of $\mathscr{L}_y^s= \{L^s_y(p)\}_{p\in W^u(y)}$ by points of $W^u(y)$, since the leaf $L^s_y(p)$ for $p\in W^u(y)$ does indeed pass through $p$. Similarly, the symbol $S^s_y(p)$ will denote the leaf of $\mathscr{S}_y^s$ passing through $p\in U_y(1)\setminus W^s(y)$. 

\begin{lem}
\label{smoothness}
For every pair of stationary points $y,y'\in Y$ the map 
\[
p \mapsto T_p S^s_y(p)
\]
is $\mathscr{L}_{y'}^s$-smooth.
\end{lem}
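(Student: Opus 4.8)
The plan is to reduce everything to a purely local statement and then to split the verification according to the Morse indices of $y$ and $y'$, since on the overlap of the domains of $\mathscr{S}^s_y$ and $\mathscr{L}^s_{y'}$ one of these two foliations refines the other, by the inductive hypothesis in force for the points of $Y$.

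Suppose first that $\mathrm{ind}(y')>\mathrm{ind}(y)$. Then the inductive compatibility condition (condition (iv) of Theorem \ref{fitncpar} for the points of $Y$, with $y'$ in the role of the point of larger index) gives that $\mathscr{L}^s_{y'}$ refines $\mathscr{S}^s_y$. On the other hand, the tangent map $p\mapsto T_pS^s_y(p)$ of the partially smooth foliation $\mathscr{S}^s_y$ is automatically $\mathscr{S}^s_y$-smooth, by the elementary remarks of Section \ref{admsec}; hence, by the ``a fortiori'' principle for maps whose domain carries a refinement, it is $\mathscr{L}^s_{y'}$-smooth on the overlap, and there is nothing more to do. If $\mathrm{ind}(y')=\mathrm{ind}(y)$ with $y'\neq y$, then $U_y$ and $U_{y'}$ are disjoint --- by (MS-2) together with the inclusions $U_y\subset\phi(\R\times D_y(1))$ and $U_{y'}\subset\phi(\R\times D_{y'}(1))$ --- so the overlap is empty; and if $y'=y$ the assertion is exactly the $\mathscr{L}^s_y$-smoothness of $\mathscr{S}^s_y$, which is part of the inductive data.

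The substantial case is $\mathrm{ind}(y)>\mathrm{ind}(y')$. Here $\mathscr{L}^s_y$ refines $\mathscr{S}^s_{y'}$, hence also $\mathscr{L}^s_{y'}$, and $\mathscr{L}^s_y$ is $\mathscr{L}^s_{y'}$-smooth by the inductive hypothesis and its standard consequence. The key point is that $\mathscr{S}^s_y$ is not an arbitrary refinement of $\mathscr{L}^s_y$: by Propositions \ref{folinsphe} and \ref{coerfu}, the leaves of $\mathscr{S}^s_y$ contained in a fixed leaf $L^s_y(p)$ are precisely the level sets of the restriction to $L^s_y(p)$ of the function $\rho_y$ of Proposition \ref{coerfu}, a restriction which has no critical point away from $p$. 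Consequently, for every $q$ in the domain of $\mathscr{S}^s_y$,
\[
T_qS^s_y(q)=\ker\bigl(d(\rho_y|_{L^s_y(q)})(q)\bigr)\subset T_qL^s_y(q).
\]
I would then obtain the conclusion by combining three facts: (1) $q\mapsto T_qL^s_y(q)$ is $\mathscr{L}^s_{y'}$-smooth, which is precisely the $\mathscr{L}^s_{y'}$-smoothness of $\mathscr{L}^s_y$; (2) the restriction of $\rho_y$ to the complement of $W^u(y)$ is $\mathscr{L}^s_{y'}$-smooth, which is the content of Proposition \ref{coerfu} (iv) applied with $\mathscr{L}'=\mathscr{L}^s_{y'}$, whose hypotheses hold by (1) and the fact that $\mathscr{L}^s_y$ refines $\mathscr{L}^s_{y'}$; (3) the non-vanishing of the leafwise differential of $\rho_y$ off $W^u(y)$, from Proposition \ref{coerfu} (v). Read through a partially smooth parametrization of $\mathscr{L}^s_{y'}$, items (1) and (2) say that the plane $T_qL^s_y(q)$ and the linear form $d(\rho_y|_{L^s_y(q)})(q)$ on it depend on the leaf variable infinitely differentiably, with all partial differentials continuous; by (3), the hyperplane of $T_qL^s_y(q)$ annihilated by this form is obtained, in suitable Grassmann charts, by an algebraic operation that is regular exactly where the form is non-zero, hence it too is smooth in the leaf variable with continuous partial differentials of every order. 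This yields the $\mathscr{L}^s_{y'}$-smoothness of $q\mapsto T_qS^s_y(q)$.

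The step I expect to be the genuine obstacle lies inside fact (1): controlling the one extra ``radial'' dimension that separates the sphere foliation $\mathscr{S}^s_{y'}$ from the full stable foliation $\mathscr{L}^s_{y'}$, which is precisely where the relation ``refines and is smooth over'' fails to be transitive in a purely formal way and where one must rely on the inductive construction of $\mathscr{L}^s_y$ in Section \ref{refsec} --- built as the flow-invariant foliation integrating vector fields that are prescribed to be smooth along all the previously constructed stable and sphere foliations, so that its $\mathscr{L}^s_{y'}$-smoothness is put in by hand rather than deduced afterwards. Granting (1), facts (2) and (3) and the Grassmannian bookkeeping are routine, and the cases $\mathrm{ind}(y')\geq\mathrm{ind}(y)$ are immediate.
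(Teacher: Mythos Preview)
Your argument is correct and follows the same three-way case split as the paper's proof. In the cases $\mathrm{ind}(y')\geq\mathrm{ind}(y)$ your reasoning and the paper's coincide.

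In the remaining case $\mathrm{ind}(y')<\mathrm{ind}(y)$ the paper is much terser: it invokes conditions (ii) and (iv) of Theorem~\ref{fitncpar} to say that $\mathscr{L}^s_y$ refines $\mathscr{L}^s_{y'}$ and is $\mathscr{L}^s_{y'}$-smooth, and then declares this ``equivalent'' to the $\mathscr{L}^s_{y'}$-smoothness of $p\mapsto T_pS^s_y(p)$. Your route via the identity $T_qS^s_y(q)=\ker d(\rho_y|_{L^s_y(q)})(q)$ together with Proposition~\ref{coerfu}(iv)--(v) makes explicit the step the paper glosses over, namely why the $\mathscr{L}^s_{y'}$-smoothness of $\mathscr{L}^s_y$ upgrades to that of the strictly finer sphere foliation $\mathscr{S}^s_y$. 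Your remark that ``refines and is $\mathscr{L}'$-smooth'' is not formally transitive is exactly right; the function $\rho_y$ is what bridges the gap here, and this is the added value of your argument over the paper's. The concern you flag about fact~(1) is legitimate but, as you note, it is part of the inductive package: the remark following Theorem~\ref{fitncpar} records that (ii) and (iv) together yield the $\mathscr{L}^s_{y'}$-smoothness of $\mathscr{L}^s_y$, and this is indeed built into the construction in Section~\ref{refsec}.
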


\begin{proof}
If $y'=y$ then the above map is $\mathscr{L}_{y'}^s$-smooth because $\mathscr{L}_{y'}^s=\mathscr{L}_{y}^s$ is a partially smooth foliation. A fortiori, this map is $\mathscr{L}_{y'}^s$-smooth whenever $\mathrm{ind}(y')\geq \mathrm{ind}(y)$, because in this case the foliation $\mathscr{L}_{y'}^s$ refines the foliation $\mathscr{L}_{y}^s$. Finally, assume that $\mathrm{ind}(y')< \mathrm{ind}(y)$. In this case, conditions (ii) and (iv) of Theorem \ref{fitncpar} imply that the foliation $\mathscr{L}^s_y$ refines $\mathscr{L}^s_{y'}$ and is $\mathscr{L}^s_{y'}$-smooth. This is equivalent to the fact that the map $p \mapsto T_p S^s_y(p)$ is $\mathscr{L}^s_{y'}$-smooth.
\end{proof}

In order to simplify the notation, we set
\[
\dot{W}^u(x) := W^u(x)\setminus \{x\}.
\]
Consider the smooth function
\[
\tau: \dot{W}^u(x) \rightarrow \R, \qquad \tau(p) :=  \inf \{ t\in \R \mid p\in \phi^t(D^u_x(1))\},
\]
which satisfies
\[
\phi^{-\tau(p)} (p) \in \partial D^u_x(1), \qquad \tau(\phi^t(p)) = \tau(p) + t \qquad \forall p\in  \dot{W}^u(x), \; \forall t\in \R.
\]
Then the map
\begin{equation}
\label{diffeo}
\dot{W}^u(x) \times \mathrm{int}(D^s_x(1)) \rightarrow M, \qquad (p,q) \mapsto \phi^{\tau(p)}( \phi^{-\tau(p)}(p),q),
\end{equation}
is a diffeomorphism onto an open invariant subset $\tilde{U}(1)$ of $M$. The pair in the argument of $\phi^{\tau(p)}$ refers to the identification of a neighborhood $D_x(1)$ of $x$ with $D^u_x(1)\times D^s_x(1)$. This map conjugates the restrictions of $\phi$ to $\tilde{U}_x(1)$ to the flow
\[
(t,p,q) \mapsto (\phi^t(p),q)
\]
on $\dot{W}^u(x) \times \mathrm{int}(D^s_x(1))$. For every $s\in (0,1]$, we denote by $\tilde{U}_x(s)$ the image of $\dot{W}^u(x) \times \mathrm{int}(D^s_x(s))$ by the map (\ref{diffeo}). By construction,
\[
\tilde{U}_x(s) \subset \tilde{U}_x(s') \quad \mbox{if } 0<s<s'\leq 1 \qquad \mbox{and} \quad \bigcap_{s\in (0,1]} \tilde{U}_x(s) = \dot{W}^u(x).
\]
By (MS-2), each point in $\tilde{U}_x(1)$ belongs to the stable manifold of some stationary point $y$ with $\ind (y) < \ind(x)=k$.

Denote by $Z$ the set of all stationary points of $\phi$ of Morse index less than $k$. Then $Z\subset Y$ and $Z\setminus Y$ consists of stationary points of index $k$ other than $x$. Thanks to (MS-2) and to the fact that $U_y$ is contained in $\phi(\R \times D_y(1))$ for every $y\in Y\setminus Z$, the set $\tilde{U}_x(1)$ is disjoint from each $U_y(1)$ with $y\in Y \setminus Z$. The invariant sets $U_x(r)$, $r\in (0,1]$, that we wish to construct will be contained in the set $\tilde{U}_x(1)\cup W^u(x)$, which is also disjoint from each $U_y(1)$ with $y\in Y \setminus Z$. Therefore, we will need to check property (iv) of Proposition \ref{laprop} only for stationary points $y$ belonging to $Z$.

\begin{lem}
\label{lemmulo}
If $s$ is small enough then there exists a continuous $\phi$-invariant Riemannian metric $g$ on $\tilde{U}_x(s)$ such that:
\begin{enumerate}[(i)]
\item For every $y\in Z$ and every $p\in \dot{W}^u(x) \cap U_y(3/4)$ the orthogonal complement of $T_p W^u(x)$ is contained in $T_p S^s_y(p)$.
\item For every $y\in Z$ the metric $g$ is $\mathscr{L}^s_y$-smooth.
\end{enumerate}
\end{lem}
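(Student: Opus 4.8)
Fix a smooth background metric $g_0$ on $M$. Via the diffeomorphism~(\ref{diffeo}) we identify $\tilde U_x(s)$ with $\dot W^u(x)\times\mathrm{int}(D^s_x(s))$, under which $\phi$ becomes translation in the first factor; thus building a $\phi$-invariant metric amounts to building a metric tensor that is constant along that translation, and it is enough to produce everything $\phi$-invariantly. Note that (i) concerns $g$ only along $\dot W^u(x)$, whereas (ii) is a regularity requirement on all of $\tilde U_x(s)$. The plan is to first construct a $\phi$-invariant $(n-k)$-plane field $p\mapsto N_p$ along $\dot W^u(x)$, transverse to $T_pW^u(x)$, $\mathscr L^s_{y'}$-smooth for every $y'\in Z$, and with $N_p\subseteq T_pS^s_y(p)$ whenever $y\in Z$ and $p\in U_y(3/4)$; then to promote $N$ to the metric $g$.

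Several facts from the inductive hypotheses will be used. For $y\in Z$ (the stationary points of index $<k$): \emph{(a)} each $S^s_y(p)$ is transverse to $W^u(x)$ by Theorem~\ref{fitncpar}(ii), and since $k>\ind(y)$ the intersection $W^u(x)\cap S^s_y(p)$ is a submanifold of dimension $k-\ind(y)-1$ with $T_p(W^u(x)\cap S^s_y(p))=T_pW^u(x)\cap T_pS^s_y(p)$ and $T_pW^u(x)+T_pS^s_y(p)=T_pM$; \emph{(b)} if $\ind(y)>\ind(y')$ and $p\in U_y\cap U_{y'}$ then $S^s_y(p)\subseteq S^s_{y'}(p)$ (because $\mathscr S^s_y$ refines $\mathscr L^s_y$, which refines $\mathscr S^s_{y'}$, by Theorem~\ref{fitncpar}(ii) and (iv)), so $T_pS^s_y(p)\subseteq T_pS^s_{y'}(p)$ — the higher the index, the smaller the sphere--tangent space; \emph{(c)} distinct $y,y'\in Z$ of equal index have $U_y\cap U_{y'}=\emptyset$; \emph{(d)} $\dot W^u(x)\subseteq\bigcup_{y\in Z}U_y(r)$ for every $r\in(0,1]$, since by (MS-2) every point of $\tilde U_x(1)$ lies on some $W^s(y)$ with $\ind(y)<k$ and $W^s(y)\subseteq\bigcap_rU_y(r)$. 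By (b) and (c), for each $p$ there is a \emph{unique} $y^*(p)\in Z$ of largest index with $p\in U_y(3/4)$, and $T_pS^s_{y^*(p)}(p)$ is the smallest among the $T_pS^s_y(p)$ with $p\in U_y(3/4)$; so it suffices to arrange $N_p\subseteq T_pS^s_{y^*(p)}(p)$. For each $y\in Z$ define, on $\dot W^u(x)\cap U_y(1)$, the model plane field
\[
N^{(y)}_p:=\bigl(T_p(W^u(x)\cap S^s_y(p))\bigr)^{\perp_{g_0}}\cap T_pS^s_y(p).
\]
By (a) this is an $(n-k)$-plane field, contained in $T_pS^s_y(p)$, transverse to $T_pW^u(x)$, and $\mathscr L^s_{y'}$-smooth for every $y'\in Z$ — since $p\mapsto T_pS^s_y(p)$ is $\mathscr L^s_{y'}$-smooth by Lemma~\ref{smoothness}, while $TW^u(x)$ and $g_0$ are smooth and the intersections have constant rank. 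I then patch the $N^{(y)}$ together by \emph{downward} induction on $j=\ind(y)$, defining $N$ on $\dot W^u(x)\cap\bigcup_{\ind(y)\ge j}U_y(3/4)$ so that $N_p\subseteq T_pS^s_{y^*(p)}(p)$ there. In passing from $j+1$ to $j$, the crucial point is that on an overlap $U_y(3/4)\cap\bigcup_{\ind(y')\ge j+1}U_{y'}(3/4)$ (with $\ind(y)=j$) the $N$ already built lies, by (b), inside $T_pS^s_y(p)$; hence both it and $N^{(y)}$ are complements of $T_p(W^u(x)\cap S^s_y(p))$ inside the \emph{fixed} subspace $T_pS^s_y(p)$, so they can be combined convexly (writing each as a graph over the reference complement $N^{(y)}_p$) by means of a $\phi$-invariant smooth partition of unity that privileges higher indices. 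The combination stays in $T_pS^s_y(p)$, hence by (b) in $T_pS^s_{y^*(p)}(p)$, remains transverse to $T_pW^u(x)$ (graphs of linear maps are transverse to the reference complement), and is $\mathscr L^s_{y'}$-smooth for all $y'\in Z$.

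To pass from $N$ to $g$: after shrinking $s$ if necessary, extend $N$ to a $\phi$-invariant, $\mathscr L^s_{y'}$-smooth (for all $y'\in Z$) distribution $\hat N$ of $(n-k)$-planes on $\tilde U_x(s)$, and $TW^u(x)$ to a complementary $\phi$-invariant, $\mathscr L^s_{y'}$-smooth $k$-plane distribution $\hat W$, using partially smooth charts of the foliations $\mathscr L^s_{y'}$ (which $W^u(x)$ meets transversally) together with~(\ref{diffeo}); declaring $\hat N\perp_g\hat W$ and taking the restriction of $g_0$ on each summand yields a continuous $\phi$-invariant metric $g$ on $\tilde U_x(s)$ that is $\mathscr L^s_y$-smooth for every $y\in Z$ and satisfies $(T_pW^u(x))^{\perp_g}=N_p$ along $\dot W^u(x)$, so that (i) and (ii) hold.

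I expect the main obstacle to be the patching step, and in particular the construction of the privileging partition of unity. One needs $N_p\subseteq T_pS^s_y(p)$ \emph{simultaneously} for all $y\in Z$ with $p\in U_y(3/4)$, depending continuously and $\mathscr L^s_{y'}$-smoothly on $p$ as this finite set varies; this is exactly where the nesting (b) and the several scales $U_y(1/2)\subset U_y(3/4)\subset U_y(1)$, together with (d), are indispensable (they are what let one keep working, in the overlaps, inside a \emph{fixed} subspace, where convex combinations of transverse planes make sense and stay admissible). The truly delicate point is that the closures $\overline{U_{y'}(r)}$ for higher-index $y'$ accumulate on the lower-index stable manifolds along the stratification (MS-1), so the cut-off functions cannot be chosen naively: they must be built by induction over the index, each stage cut back by the cut-offs of the previous, higher-index stages.
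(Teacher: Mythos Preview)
Your route is different from the paper's and, while workable in principle, has a genuine gap in the final extension step.

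The paper builds $g$ on all of $\tilde U_x(s)$ in one shot, with no intermediate plane field and no induction. From a smooth $\phi$-invariant background metric $\widehat g$ and a smooth $\phi$-invariant extension $\widehat P_x$ of the $\widehat g$-orthogonal projector onto $TW^u(x)$ (both obtained via~(\ref{diffeo})), it sets
\[
S(p):=\widehat P_x(p)+\sum_{y\in Z}\varphi_y(p)\,\widehat P_{T_pS^s_y(p)},\qquad g(u,v):=\widehat g\bigl(S(p)^{-1}u,v\bigr),
\]
where $\{\varphi_y\}$ is a $\phi$-invariant smooth partition of unity with $\supp\varphi_y\subset U_y(1)\setminus\bigcup_{\ind(y)<\ind(z)<k}U_z(3/4)$. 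Positivity of $S$ on $\dot W^u(x)$ follows from transversality of $W^u(x)$ with the spheres, hence on $\tilde U_x(s)$ for small $s$; the $\mathscr L^s_y$-smoothness of $g$ comes from Lemma~\ref{smoothness}. Property~(i) is then pure linear algebra: for $p\in\dot W^u(x)\cap U_z(3/4)$, the support condition forces every $y$ with $\varphi_y(p)>0$ to satisfy $T_pS^s_y(p)\subset T_pS^s_z(p)$, so $S(p)$ sends the $\widehat g$-orthogonal complement of $T_pS^s_z(p)$ into $T_pW^u(x)$; but $S(p)$ carries $\widehat g$-orthogonal complements to $g$-orthogonal complements, and taking $g$-orthogonals yields~(i). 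No separate extension is needed because every ingredient of $S$ already lives, with the right regularity, on all of $\tilde U_x(s)$.

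Your gap is precisely this point: extending $N$ from $\dot W^u(x)$ to $\tilde U_x(s)$ so that the result is $\mathscr L^s_y$-smooth for \emph{every} $y\in Z$. The product structure~(\ref{diffeo}) is smooth, but its fibres are not aligned with the leaves of any $\mathscr L^s_y$, so ``extend via~(\ref{diffeo}) and partially smooth charts'' does not produce $\mathscr L^s_y$-smoothness for all $y$ simultaneously. The fix is to never restrict to $\dot W^u(x)$ in the first place: define your model fields on all of $\tilde U_x(s)\cap U_y(1)$ by $N^{(y)}_p:=(\hat W_p\cap T_pS^s_y(p))^{\perp_{g_0}}\cap T_pS^s_y(p)$, where $\hat W$ is your smooth extension of $TW^u(x)$. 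For small $s$ the transversality persists, and since $p\mapsto T_pS^s_y(p)$ is already $\mathscr L^s_{y'}$-smooth on all of $U_y$ by Lemma~\ref{smoothness}, so is $N^{(y)}$; the patching then happens directly on $\tilde U_x(s)$.

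Two smaller points. First, your sentence ``hence by (b) in $T_pS^s_{y^*(p)}(p)$'' is backwards: (b) gives $T_pS^s_{y^*(p)}(p)\subset T_pS^s_y(p)$, not the reverse, so lying in $T_pS^s_y(p)$ does not by itself put you in the smaller space. What actually forces $N_p\subset T_pS^s_{y^*(p)}(p)$ on the overlap is that your privileging partition of unity must assign weight~$1$ to the old $N$ throughout $\bigcup_{\ind(y')\ge j+1}U_{y'}(3/4)$; say this explicitly. Second, in your downward induction the already-built $N$ lives only on $\bigcup_{\ind(y')\ge j+1}U_{y'}(3/4)$, so you cannot interpolate across its relative boundary. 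You need a nested family of radii (build on $\bigcup_{\ind(y')\ge j}U_{y'}(r_j)$ with $r_{k-1}>\cdots>r_0>3/4$) so that each stage has room to cut off inside the previous one.
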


\begin{proof}
Using the conjugacy (\ref{diffeo}), it is easy to construct a smooth $\phi$-invariant metric $\widehat{g}$ on $\tilde{U}_x(1)$, and we denote by $\widehat{P}_V$ the corresponding orthogonal projection onto a linear subspace $V\subset T_p M$, $p\in \tilde{U}_x(1)$. Using again the diffeomorphism (\ref{diffeo}), we extend the smooth $\phi$-invariant map 
\[ 
p\mapsto \widehat{P}_{T_p W^u(x)}, \quad p\in W^u(x),
\]
to a smooth $\phi$-invariant map $\widehat{P}_x$ on $\tilde{U}_x(1)$ such that $\widehat{P}_x(p)$ is a $\widehat{g}$-orthogonal projector on $T_p M$ for every $p\in \tilde{U}_x(1)$. We fix a $\phi$-invariant smooth partition of unity $\{\varphi_y\}_{y\in Z}$ on $\tilde{U}_x(1)$ such that for every $y\in Z$ we have
\begin{equation}
\label{cphi}
\supp \varphi_y \subset U_y(1) \setminus \bigcup_{\ind(y)< \ind (z)<k}U_z ( 3/4).
\end{equation}
A set of functions as above can be defined by choosing $\chi_y: U_y(1) \rightarrow [0,1]$, $y\in Z$, to be a smooth $\phi$-invariant function taking the value 1 on $U_y(3/4)$ and the value $0$ on $U_y(1)\setminus U_y(4/5)$, by extending it to a smooth function on $\tilde{U}_x(1)$ giving it the value 0 on $\tilde{U}_x(1)\setminus U_y(1)$, 
and by setting
\[
\varphi_y := \chi_y \cdot \prod_{\ind(y) < j < k} \Bigl( 1 - \sum_{\ind (z) = j} \chi_z \Bigr).
\]
For each $p\in \tilde{U}_x(1)$ we define the following linear endomorphism of $T_p M$
\[
S(p):= \widehat{P}_x(p) + \sum_{y\in Z} \varphi_y(p) \widehat{P}_{T_p S^s_y(p)}.
\]
Each $S(p)$ is self-adjoint and semi-positive with respect to the metric $\widehat{g}$ and, as a map, $S$ is $\mathscr{L}_z^s$-smooth for every $z\in Z$, because $\widehat{P}_x$ and the partition of unity are smooth and each map $p\mapsto \widehat{P}_{T_p S^s_y(p)}$ is $\mathscr{L}_z^s$-smooth thanks to Lemma \ref{smoothness}. If $p$ belongs to $\dot{W}^u(x)$ and $u$ is a vector in $T_p M$ and $\|\cdot\|$ is the norm that is induced by $\widehat{g}$, we have the identity
\[
\widehat{g}(S(p)u,u) = \Bigl\| \widehat{P}_{T_p W^u(x)} u \Bigr\|^2 + \sum_{y\in Z} \varphi_y(p) \Bigl\| \widehat{P}_{T_p S^s_y(p)} u \Bigr\|^2.
\]
Thanks to the fact that the spheres $S^s_y(p)$ are transverse to $W^u(x)$ (see statement (ii) in Theorem \ref{fitncpar}) and that $\varphi_y(p)>0$ for at least one $y\in Z$, the above identity implies that $S(p)$ is positive for every $p\in \dot{W}^u(x)$. Therefore, $S(p)$ is positive for every $p$ in a neighborhood of the compact set $\partial D^u_x(1)$ and, by the $\phi$-invariance, it is positive  for every $p$ in $\tilde{U}_x(s)$ provided that $s$ is small enough.

We fix such a small $s$ and define the metric $g$ on $\tilde{U}_x(s)$ as
\[
g(u,v) := \widehat{g}(S(p)^{-1}u,v) \qquad \forall u,v\in T_p M.
\]
The metric $g$ is $\phi$-invariant and satisfies (ii). Thanks to (\ref{cphi}), in the last sum in the definition of $S(p)$ only stationary points $y\in Z$ whose index is at least $\ind(z)$ may appear. Since the foliation $\mathscr{S}^s_y$ refines the foliation $\mathscr{S}^s_z$ for all rest points $z\in Z$ with $\mathrm{ind}(y) \geq \mathrm{ind}(z)$, we have the inclusion
\[
T_p S^s_y(p) \subset T_p S^s_z(p)
\]
for all $y\in Z$ such that $\varphi_y(p)>0$. We deduce that for such a stationary point $y$, the projector $\widehat{P}_{T_p S^s_y(p)}$ vanishes on the orthogonal complement $V$ of $T_p S^s_z(p)$ with respect to the metric $\widehat{g}$. The orthogonal complement of $T_p S^s_z(p)$ with respect to the metric $g$ is then the subspace 
\[
S(p) V = \widehat{P}_{T_p W^u(x)}  V,
\]
which is contained in $T_p W^u(x)$. By passing to the $g$-orthogonal spaces, we deduce that the $g$-orthogonal complement of $T_p W^u(x)$ is contained in $T_p S^s_z(p)$, as claimed in (i).
\end{proof}

Let $s\in (0,1]$ be small enough so that $\tilde{U}_x(s)$ carries a Riemannian metric $g$ with the properties stated in the above lemma.
The orthogonal projector onto a linear subspace $V\subset T_p M$, $p\in \tilde{U}_x(s)$, with respect to $g$ will be denoted by $P_V$.

By using the diffeomorphism (\ref{diffeo}), we can construct smooth $\phi$-invariant vector fields $\tilde{v}_1,\dots,\tilde{v}_{n-k}$ on $\tilde{U}_x(s)$ such that for each $p\in \dot{W}^u(x)$ the vectors $\tilde{v}_1(p),\dots,\tilde{v}_{n-k}(p)$ span the $g$-orthogonal complement of $T_p W^u(x)$ in $T_p M$.

Arguing as in the proof of Lemma \ref{lemmulo}, we can find a $\phi$-invariant smooth partition of unity $\{\varphi_y\}_{y\in Z}$ on $\tilde{U}_x(s)$ such that for each $y\in Z$
\begin{equation}
\label{cphi2}
\supp \varphi_y \subset U_y(3/4) \setminus \bigcup_{\ind(y)< \ind (z)<k}U_z ( 1/2).
\end{equation}
We define the $\phi$-invariant vector fields $v_1,\dots,v_{n-k}$ on $\tilde{U}_x(s)$ as
\[
v_j(p) := \sum_{y\in Z} \varphi_y(p) P_{T_p S^s_y(p)} \tilde{v}_j(p) \qquad \forall p\in \tilde{U}_x(s).
\]
Condition (\ref{cphi2}) and the fact that the foliations $\mathscr{S}^s_y$, $y\in Z$, refine each other imply that each vector field $v_j$ satisfies
\begin{equation}
\label{tang}
v_j(p) \in T_p S^s_y(p)\subset L^s_y(p) \qquad \forall p\in \tilde{U}_x(s) \cap U_y(1/2), \;\; \forall y\in Z.
\end{equation}
This fact and the regularity properties of the foliations $\mathscr{S}^u_y$ and of the metric $g$ that are stated in Lemmas \ref{smoothness} and \ref{lemmulo} imply that each $v_j$ is a partially smooth vector field with respect to the foliation $\mathscr{L}^s_y|_{U_y(1/2)}$, for every $y\in Z$. Finally, the fact that each $\varphi_y$ is supported in $U_y(3/4)$ and condition (i) of Lemma \ref{lemmulo} imply that for every $p\in \dot{W}^u(x)$ the operator
\[
\sum_{y\in Z} \varphi_y(p) P_{T_p S^s_y(p)}
\]
is the identity on the orthogonal complement of $T_p W^u(x)$ in $T_q M$ and thus the vectors $v_j(p) = \tilde{v_j}(p)$ form a basis of this orthogonal complement. 

Thanks to the discussion at the end of Section \ref{admsec}, each vector field $v_j$ has a well-defined continuous maximal local flow, which we denote by 
\[
\phi_{v_j} : \mathrm{dom} (\phi_{v_j}) \rightarrow \tilde{U}_x(s), \qquad \mathrm{dom} (\phi_{v_j}) \subset \R \times \tilde{U}_x(s),
\]
and which preserves the leaves of every foliation $\mathscr{S}^s_y|_{U_y(1/2)}$, $y\in Z$, and is $\R\times \mathscr{L}^s_y|_{U_y(1/2)}$-smooth. The fact that the vector fields $v_j$ are $\phi$-invariant implies that their flows commute with the flow $\phi$.

We can use the local flows $\phi_{v_j}$ to construct the following map $\psi$ on a subset $\mathrm{dom}(\psi)$ of $\R^{n-k} \times \dot{W}^u(x)$:
\[
\psi: \mathrm{dom}(\psi) \rightarrow M, \qquad 
\psi(t_1,\dots,t_{n-k},p) := \phi_{v_{n-k}}^{t_{n-k}} \circ \dots \circ \phi_{v_1}^{t_1} (p),
\]
Here, $\mathrm{dom}(\psi)$ is the maximal subset of $\R^{n-k} \times \dot{W}^u(x)$ on which the above composition is defined. By the local flow properties of the maps $\phi_{v_j}$, $\mathrm{dom}(\psi)$ is an open neighborhood of $\{0\} \times \dot{W}^u(x)$. Moreover, the fact that the flows $\phi_{v_j}$ commute with the flow $\phi$ imply that $\mathrm{dom}(\psi)$ is invariant with respect to the flow $\mathrm{id}_{\R^{n-k}} \times \phi$ and that $\psi$ conjugates the latter flow with $\phi$: 
\begin{equation}
\label{inva}
\phi^t\circ \psi(t_1,\dots,t_{n-k},p) = \psi (t_1,\dots,t_{n-k},\phi^t(p)) \qquad \forall t\in \R, \; (t_1,\dots,t_{n-k},p)\in \mathrm{dom}(\psi).
\end{equation}
The domain of $\psi$ is covered by the open sets
\[
\bigl\{\R^{n-k} \times \bigl(U_y(1/2) \cap \dot{W}^u(x)\bigr)\bigr\}_{y\in Z},
\]
each of which carries the partially smooth foliations
\[
\begin{split}
\widehat{\mathscr{S}}^s_y := \bigl\{ \widehat{S}^s_y(p,r) := \mathrm{dom}(\psi) \cap \bigl(  \R^{n-k} \times \bigl( S^s_y(p,r) \cap \dot{W}^u(x)\bigr) \bigr)\bigr\}_{(p,r) \in W^u(y)\times (0,1/2)} , \\
\widehat{\mathscr{L}}^s_y := \bigl\{ \widehat{L}^s_y(p) := \mathrm{dom}(\psi) \cap \bigl( \R^{n-k} \times \bigl( L^s_y(p) \cap U_y(1/2)\cap  \dot{W}^u(x)\bigr)\bigr)\bigr\}_{p \in W^u(y)}.
 \end{split}
\]
Since each $\phi_{v_j}$ preserves the leaves of $\mathscr{S}^s_y$ for every $y\in Z$, the map $\psi$ satisfies
\begin{equation}
\label{raffa}
\psi \bigl(\widehat{S}^s_y(p,r) \bigr)  \subset S^s_y(p,r) \qquad \forall (p,r) \in W^u(y)\times (0,1/2), \; \forall y\in Z.
\end{equation}
Moreover, the smoothness properties of the flows $\phi_{v_j}$ and the smoothness of compositions that we discussed in Section \ref{admsec} implies that $\psi$ is smooth with respect to all the above foliations. In particular, $\psi$ is infinitely differentiable in $(t_1,\dots,t_{n_k})\in \R^{n-k}$ and the differentials of every order in these variables are continuous on $\mathrm{dom}(\psi)$. Differentiation at points in $\{0\} \times \dot{W}^u(x)$ yields
\begin{equation}
\label{diffe}
\partial_{t_j} \psi(0,p) = v_j(p) \qquad \forall j\in \{1,\dots,n-k\}, \; p\in \dot{W}^u(x).
\end{equation}

\begin{lem} 
There is an open neighborhood $\Omega$ of $\{0\}\times \dot{W}^u(x)$ in $\mathrm{dom}(\psi)$ that is invariant with respect to the flow $\mathrm{id}_{\R^{n-k}} \times \phi$ and on which $\psi$ is a homeomorphism. 
\end{lem}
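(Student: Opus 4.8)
The plan is to exhibit an explicit continuous inverse of $\psi$ on an invariant neighbourhood, rather than to quote an inverse function theorem for $\psi$, which is not of class $C^1$. Recall from (\ref{diffeo}) that $\dot{W}^u(x)\times\mathrm{int}(D^s_x(1))$ is identified with $\tilde{U}_x(1)$ so that $\dot{W}^u(x)$ corresponds to $\dot{W}^u(x)\times\{0\}$ and $\phi^t$ to $(p,q)\mapsto(\phi^t(p),q)$; write points of $\tilde{U}_x(s)$ accordingly as pairs $(a,b)$ with $a\in\dot{W}^u(x)$, $b\in\mathrm{int}(D^s_x(s))$, and let $\mathrm{pr}\colon\tilde{U}_x(s)\to E^s_x$ be the projection onto the second factor. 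Put $\Phi_{\vec t}:=\phi_{v_{n-k}}^{t_{n-k}}\circ\cdots\circ\phi_{v_1}^{t_1}$ and $\bar\Phi_{\vec t}:=\phi_{v_1}^{-t_1}\circ\cdots\circ\phi_{v_{n-k}}^{-t_{n-k}}=\Phi_{\vec t}^{-1}$, so that $\psi(\vec t,p)=\Phi_{\vec t}(p)$, and define on the natural open $(\mathrm{id}_{\R^{n-k}}\times\phi)$-invariant domain of $\bar\Phi$ the maps $\beta(\vec t,q):=\mathrm{pr}(\bar\Phi_{\vec t}(q))$ and $\alpha(\vec t,q):=$ the $\dot{W}^u(x)$-component of $\bar\Phi_{\vec t}(q)$. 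Exactly as for $\psi$ (see the discussion preceding (\ref{diffe})), the maps $\beta$ and $\alpha$ are continuous, infinitely differentiable in $\vec t$, with all $\vec t$-derivatives continuous; moreover $\bar\Phi_{\vec t}(q)\in\dot{W}^u(x)$ if and only if $\beta(\vec t,q)=0$, in which case $\Phi_{\vec t}(\alpha(\vec t,q))=q$.

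Since $\bar\Phi_0=\mathrm{id}$ and, on setting $t_i=0$ for $i\neq j$, $\bar\Phi_{\vec t}$ reduces to $\phi_{v_j}^{-t_j}$, one gets $\partial_{t_j}\bar\Phi_{\vec t}(q)|_{\vec t=0}=-v_j(q)$, so $d_{\vec t}\beta(0,q)$ is $d\,\mathrm{pr}$ composed with the map $\vec e_j\mapsto-v_j(q)$. For $q=p\in\dot{W}^u(x)$ we have $\beta(0,p)=0$, and we recalled above that $v_1(p),\dots,v_{n-k}(p)$ form a basis of the $g$-orthogonal complement of $T_pW^u(x)$; this complement is a linear complement of $T_pW^u(x)=\ker(d\,\mathrm{pr})$ in $T_pM$, so $d_{\vec t}\beta(0,p)\colon\R^{n-k}\to E^s_x$ is a linear isomorphism for every $p\in\dot{W}^u(x)$.

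It remains to solve $\beta(\vec t,q)=0$ for $\vec t$. Let $K:=\partial D^u_x(1)\subset\dot{W}^u(x)$, a compact set meeting each orbit of $\phi$ in $\dot{W}^u(x)$ exactly once; since the $v_j$ are $\phi$-invariant, $\bar\Phi_{\vec t}$, $\beta$ and $\alpha$ are invariant under the flow $(a,b)\mapsto(\phi^\sigma(a),b)$. Because $d_{\vec t}\beta(0,\cdot)$ is invertible on the compact set $K$ and the $\vec t$-derivatives of $\beta$ are continuous, the quantitative implicit function theorem applied in the variable $\vec t$ with $q$ as a parameter yields $\varepsilon_0>0$ and $\delta_0\in(0,s]$ such that every $q\in\tilde{U}_x(\delta_0)$ admits a unique $\vec t(q)$ with $|\vec t(q)|<\varepsilon_0$ and $\beta(\vec t(q),q)=0$ — first for $q$ with $a$-component near $K$, then for all $q\in\tilde{U}_x(\delta_0)$ after translating by a suitable $\phi^\sigma$ and using the above invariance — and $q\mapsto\vec t(q)$ is continuous, $\phi$-invariant, and vanishes exactly on $\dot{W}^u(x)$. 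Set $W:=\tilde{U}_x(\delta_0)$, which is open and $\phi$-invariant, let $\Theta\colon W\to\R^{n-k}\times\dot{W}^u(x)$ be $\Theta(q):=(\vec t(q),\alpha(\vec t(q),q))$, and put
\[
\Omega:=\mathrm{dom}(\psi)\cap\bigl(\{\vec t\in\R^{n-k}:|\vec t|<\varepsilon_0\}\times\dot{W}^u(x)\bigr)\cap\psi^{-1}(W).
\]
Then $\Omega$ is open, contains $\{0\}\times\dot{W}^u(x)$, and is $(\mathrm{id}_{\R^{n-k}}\times\phi)$-invariant because $\psi$ conjugates this flow with $\phi$ and $W$ is $\phi$-invariant. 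For $(\vec t,p)\in\Omega$ one has $\beta(\vec t,\psi(\vec t,p))=\mathrm{pr}(\bar\Phi_{\vec t}(\Phi_{\vec t}(p)))=\mathrm{pr}(p)=0$ with $|\vec t|<\varepsilon_0$, so $\vec t=\vec t(\psi(\vec t,p))$ by uniqueness, whence $\Theta(\psi(\vec t,p))=(\vec t,p)$; conversely $\Theta(q)\in\Omega$ and $\psi(\Theta(q))=\Phi_{\vec t(q)}(\alpha(\vec t(q),q))=q$ for every $q\in W$. Thus $\psi|_\Omega\colon\Omega\to W$ and $\Theta\colon W\to\Omega$ are mutually inverse continuous maps, so $\psi|_\Omega$ is a homeomorphism onto the open subset $W$ of $M$.

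The one delicate point is the implicit function theorem step: one needs a \emph{single} invariant set $W$ on which $\vec t(q)$ is defined and continuous, so the estimates must be made uniform over the compact cross-section $\partial D^u_x(1)$ and then propagated along the flow using the $\phi$-equivariance of the whole construction; it is also essential that the inverse function theorem be invoked only in the variables $\vec t\in\R^{n-k}$, in which $\psi$ (equivalently $\beta$) is genuinely $C^\infty$, treating the point of $\dot{W}^u(x)$ merely as a continuous parameter, since $\psi$ itself need not be differentiable transversally to the leaves.
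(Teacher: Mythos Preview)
Your proof is correct and takes a genuinely different route from the paper's. The paper argues in two steps: first it shows that $\psi$ is a local homeomorphism at every $(0,p)$ by restricting to the leaves of the foliations $\widehat{\mathscr{S}}^s_y$ and $\mathscr{S}^s_y$ and applying a parametric inverse mapping theorem leafwise; then it proves global injectivity on a product neighborhood $B_\epsilon^{n-k}\times V$ of the compact slice $\{0\}\times\partial D^u_x(1)$ by a contradiction argument exploiting that $\psi(0,\cdot)$ is injective, and finally spreads this neighborhood by the flow. You instead construct an explicit continuous inverse $\Theta$ by composing the inverse flows $\bar\Phi_{\vec t}$ with the smooth projection $\mathrm{pr}$ onto the $E^s_x$ factor given by the diffeomorphism (\ref{diffeo}), and solve $\mathrm{pr}(\bar\Phi_{\vec t}(q))=0$ for $\vec t$ via the parametric implicit function theorem in the $\vec t$-variables only.

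What this buys you: you avoid invoking the foliations $\mathscr{S}^s_y$ inside the proof of the lemma (you only use the pre-established $C^\infty$-in-$\vec t$ regularity of $\psi$), you get an explicit description of $\psi(\Omega)$ as the invariant tube $\tilde U_x(\delta_0)$, and injectivity falls out of the uniqueness in the implicit function theorem rather than requiring a separate argument. The paper's approach, on the other hand, stays closer to the foliation-theoretic language of the surrounding sections and does not need to name the second projection of the chart (\ref{diffeo}). Both proofs make the estimates uniform over the compact cross-section $\partial D^u_x(1)$ and then propagate them along the flow via the $\phi$-equivariance of $\psi$; you correctly flag that the implicit function theorem must be invoked only in the $\vec t$-direction, where genuine $C^\infty$ regularity is available.
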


\begin{proof}
We first prove that $\psi$ is a local homeomorphism at every point of the form $(0,p)\in \R^{n-k} \times \dot{W}^u(x)$. Let $p\in \dot{W}^u(x)$. Then $p$ belongs to $U_y(1/2)$ for a suitable stationary point $y\in Z$, and hence $(0,p)$ belongs to the domain of the foliation $\widehat{\mathscr{S}}^s_y$. As discussed above, $\psi$ maps each leaf of $\widehat{\mathscr{S}}^s_y$ into a leaf of $\mathscr{S}^s_y$ and is smooth on each leaf, with all leafwise differentials varying continuously on the domain of the foliation $\widehat{\mathscr{S}}^s_y$ . Moreover, (\ref{diffe}) and the fact that $\psi(0,\cdot)$ is the identity on $\dot{W}^u(x)$ imply that the differential at $(0,p)$ of the restriction of $\psi$ to the leaf of $\widehat{\mathscr{S}}^s_y$ through $(0,p)$ is an isomorphism onto the tangent space to the leaf of $\mathscr{S}^s_y$ through $p$. Then the parametric inverse mapping theorem implies that $\psi$ is a local homeomorphism at $(0,p)$, as claimed.

Our next claim is that the compact set $\{0\} \times \partial D^u_x(1)$ has an open neighborhood $\Omega'$ in $\mathrm{dom}(\psi)$ such that the restriction of $\psi$ to $\Omega'$ is a homeomorphism onto an open subset of $M$. In order to prove this fact, we fix an open neighborhood $V$ of $\partial D^u_x(1)$ in $W^u(x)$ with $\overline{V}\subset \dot{W}^u(x)$, and we look at open sets $\Omega'$ of the form
\[
\Omega' = B_{\epsilon}^{n-k} \times V,
\]
where $B_{\epsilon}^{n-k}$ denotes the open ball of radius $\epsilon$ around $0$ in $\R^{n-k}$. Such a set $\Omega'$ is contained in $\mathrm{dom}(\psi)$ for $\epsilon$ small enough. Moreover, the fact that $\psi$ is a local homeomorphism at the points in $\{0\} \times \dot{W}^u(x)$ implies that $\psi$ restricts to an open map on $\Omega'$ for $\epsilon$ small enough. Therefore, it suffices to show that $\psi$ is injective on $\Omega'$ if $\epsilon$ is small enough. This is an easy consequence of the fact that $\psi$ is a local homeomorphism at the points in $\{0\}\times \dot{W}^u(x)$, on which $\psi$ is injective. Indeed, assume by contradiction that $\psi$ is not injective on $\Omega'$ for any positive $\epsilon$. Then we can find sequences $(t_j,p_j)$ and $(t_j',p_j')$ in $\R^{n-k} \times V$ such that $t_j\rightarrow 0$, $t_j'\rightarrow 0$, $(t_j,p_j) \neq (t_j',p_j')$ and $\psi(t_j,p_j) = \psi(t_j',p_j')$. Up to passing to suitable subsequences, we may assume that $p_j\rightarrow p$ and $p_j'\rightarrow p'$ for some $p,p'\in \overline{V} \subset \dot{W}^u(x)$. Then
\[
p = \psi(0,p) = \lim_{j\rightarrow \infty} \psi(t_j,p_j) = \lim_{j\rightarrow \infty} \psi(t_j',p_j') = \psi(0,p') = p'.
\]
The presence of the sequences $(t_j,p_j) \neq (t_j',p_j')$ both converging to $(0,p)=(0,p')$ and satisfying $\psi(t_j,p_j) = \psi(t_j',p_j')$ contradicts the injectivity of $\psi$ in a neighborhood of $(0,p)=(0,p')$. 

The desired open neighborhood $\Omega$ of $\{0\}\times \dot{W}^u(x)$ is obtained by applying the flow $\mathrm{id}_{\R^{n-k}} \times \phi$ to $\Omega'$. The resulting set $\Omega$ is invariant under this flow, and $\psi$ is a homeomorphism on it because it is a homeomorphism on $\Omega'$ and thanks to (\ref{inva}).
\end{proof}

By (\ref{diffe}), the differential $d_1 \psi(0,p)$ with respect to $(t_1,\dots,t_{n-k})\in \R^{n-k}$ is injective for every $p\in \dot{W}^u(x)$. Up to replacing the open set $\Omega$ by a smaller one, we may assume that $d_1 \psi(t,p)$ is injective for every $(t,p)\in \Omega$. After identifying $\dot{W}^u(x)$ with $\R^k \setminus \{0\}$ by means of a smooth diffeomorphism, we conclude that $\psi$ is a partially smooth parametrization of the foliation 
\[
\tilde{\mathscr{L}}^s_x := \Bigl\{ \tilde{L}^s_x(p) := \psi\bigl(\Omega \cap \bigr(\R^{n-k} \times \{p\}\bigr) \bigr) \Bigr\}_{p\in \dot{W}^u(x)},
\]
which is then a $(n-k)$-dimensional partially smooth foliation of the open set
\[
\tilde{U}_x' := \psi(\Omega).
\]
Condition (\ref{inva}) guarantees that $\tilde{U}_x'$ is $\phi$-invariant and so is the foliation $\tilde{\mathscr{L}}^s_x$. Condition (\ref{raffa}) implies that $\tilde{\mathscr{L}}^s_x$ refines the foliations $\mathscr{S}_y^s|_{U_y(1/2)}$ for every $y\in Z$. By the regularity properties of $\psi$, the foliation $\tilde{\mathscr{L}}^s_x$ is $\mathscr{L}_y^s|_{U_y(1/2)}$ for every $y\in Z$. Finally, (\ref{diffe}) implies that each leaf $\tilde{L}^s_x(p)$ is transverse to $W^u(x)$ at their unique intersection point $p$.

By Proposition \ref{stafol-prop} the invariant set
\[
U'_x := \tilde{U}'_x \cup W^u(x)
\]
is open and the foliation $\tilde{\mathscr{L}}^s_x$ can be completed by adding the leaf $W^u(x)$ to the stable foliation $\mathscr{L}^s_x$ of a tubular neighborhood
\[
\pi^u_x :  U'_x \rightarrow W^u(x).
\]
By Proposition \ref{folinsphe} there is an open invariant set $U_x \subset U'_x$ and a partially smooth invariant foliation $\mathscr{S}^s_x$ of $U_x \setminus W^u(x)$ into $(n-k-1)$-dimensional spheres that refines the foliation $\mathscr{L}^s_x$ and is $\mathscr{L}^s_x$-smooth. 

By the Morse-Smale assumption, the leaves of $\mathscr{S}^s_x$ that are contained in $W^s(x)$ are transverse to the unstable manifold of every stationary point of $\phi$. This transversality is actually uniform, as shown in Corollary \ref{uniftra}. Therefore,
up to reducing $U_x$ even more, we can assume that all the leaves of $\mathscr{S}^s_x$ are transverse to the unstable manifold of every stationary point.

We rename $\pi^u_x$ to be the restriction of the same map to $U_x$ and $\mathscr{L}^s_x$ to be the restriction of the associated stable foliation to $U_x$. With these choices, the conditions stated in Proposition \ref{laprop} (i), (ii) and (iv) hold. Proposition \ref{folinsphe} also gives us the existence of a family of invariant open neighborhoods $U_x(r)$, $0<r\leq 1$, such that statement (iii) in Proposition \ref{laprop} holds. This concludes the proof of this proposition, and hence of Theorem \ref{fitncpar}.

\medskip

In Example \ref{cubic} we have seen that there might be curves that are tangent to the leaves of a partially smooth foliation but contained in none of them. The next results says that this phenomenon does not happen for the foliations that are constructed in Theorem \ref{fitncpar}. 

\begin{prop}
\label{uniqueness}
The foliations $\mathscr{L}^s_x$  have the following property: each continuously differentiable curve in $U_x(1)$ that is tangent to the leaves of $\mathscr{L}^s_x$ is fully contained in one leaf. The analogous fact holds for the foliations $\mathscr{S}^s_x$.
\end{prop}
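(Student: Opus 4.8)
The plan is to argue by induction on $k=\mathrm{ind}(x)$, in parallel with the inductive construction of the foliations, reducing the statement for $\mathscr{S}^s_x$ to the one for $\mathscr{L}^s_x$. I will use two elementary observations. (a) An ordinary foliation of class $C^1$ (or smoother) has the stated property: straightening it locally by a $C^1$ chart, the transverse components of the pulled-back curve have vanishing derivative, hence are locally constant. (b) If $\mathscr{L}$ is a partially smooth foliation that refines a partially smooth foliation $\mathscr{L}'$, is $\mathscr{L}'$-smooth, and $\mathscr{L}'$ already has the stated property, then so does $\mathscr{L}$: a $C^1$ curve $\gamma$ tangent to $\mathscr{L}$ is a fortiori tangent to $\mathscr{L}'$, hence lies in a single leaf $L'$ of $\mathscr{L}'$; since $\mathscr{L}$ restricts to a genuine smooth foliation of the embedded submanifold $L'$ and $\gamma$ is a $C^1$ curve into $L'$, observation (a) applied inside $L'$ shows that $\gamma$ lies in a single leaf of $\mathscr{L}$.

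Granting the proposition for all stationary points of index $<k$, fix $x$ of index $k$ and a $C^1$ curve $\gamma\colon I\to U_x(1)$ tangent to $\mathscr{L}^s_x$; recall that $L^s_x(x)=W^s(x)$ is the distinguished leaf of $\mathscr{L}^s_x$ and that $W^u(x)$ is its transversal. Since $\overline{W^s(x)}\setminus W^s(x)$ consists of stable manifolds of index $>k$, which are disjoint from $U_x\subseteq\phi(\mathbb{R}\times D_x(1))$ by (MS-2), the leaf $W^s(x)$ is closed in $U_x$; set $J:=\gamma^{-1}(W^s(x))$, a closed subset of $I$ on which $\gamma$ trivially lies in the single leaf $W^s(x)$. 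On the open complement $I\setminus J=\gamma^{-1}(U_x\setminus W^s(x))$ I claim $\gamma$ is locally contained in a single leaf of $\mathscr{L}^s_x$. Each point $q$ of $U_x\setminus W^s(x)$ lies on a leaf $L^s_x(p)$ with $p\neq x$, whose punctured version $L^s_x(p)\setminus\{p\}$ is foliated by leaves of $\mathscr{S}^s_x$; these are transverse to every unstable manifold, so a dimension count gives $L^s_x(p)\cap W^u(y)=\emptyset$ whenever $\mathrm{ind}(y)<k$. On the other hand, by (MS-2), $q\in W^s(y)\subseteq U_y$ for some $y$ with $\mathrm{ind}(y)<k$; hence $q\in U_x\cap(U_y\setminus W^u(y))$, where by Theorem \ref{fitncpar}(iv) the foliation $\mathscr{L}^s_x$ refines $\mathscr{S}^s_y$ and is $\mathscr{S}^s_y$-smooth. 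Applying observation (b) with $\mathscr{L}'=\mathscr{S}^s_y$, which has the property by the inductive hypothesis, to the restriction of $\gamma$ to a small interval mapping into $U_y\setminus W^u(y)$, we find $\gamma$ lies there in a single leaf of $\mathscr{L}^s_x$; a routine connectedness argument upgrades this to: $\gamma$ is contained in one leaf of $\mathscr{L}^s_x$ on each connected component of $I\setminus J$.

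To glue these local conclusions I use that the retraction $\pi^u_x\colon U_x\to W^u(x)$ is continuous. The composite $c:=\pi^u_x\circ\gamma\colon I\to W^u(x)$ is continuous, is locally constant on the open set $I\setminus J$ by the previous paragraph, and equals $x$ identically on $J$ since $W^s(x)=(\pi^u_x)^{-1}(x)$. If $c$ were non-constant, some connected component $(\alpha,\beta)$ of $I\setminus J$ would carry a constant value $p\neq x$ of $c$ and have an endpoint either in $J$ or coinciding with an endpoint of $I$; in either case continuity of $c$ at that endpoint forces $p=x$, a contradiction. Hence $c$ is constant, so $\gamma$ lies in a single fiber of $\pi^u_x$, i.e. in a single leaf of $\mathscr{L}^s_x$; the base case $k=0$, where $U_x=W^s(x)$, is trivial. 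Finally, $\mathscr{S}^s_x$ refines $\mathscr{L}^s_x$ and is $\mathscr{L}^s_x$-smooth by Theorem \ref{fitncpar}(ii), so observation (b) with $\mathscr{L}'=\mathscr{L}^s_x$, which we have just shown has the property, yields the statement for $\mathscr{S}^s_x$.

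The step I expect to be most delicate is the local analysis near the distinguished leaf $W^s(x)$: Example \ref{cubic} shows a partially smooth foliation can violate this property, so no purely local argument is available there, and the point is precisely that $W^s(x)$ is the \emph{only} leaf of $\mathscr{L}^s_x$ which is not, near each of its points, subordinate to a sphere foliation $\mathscr{S}^s_y$ of strictly lower index handled by the inductive hypothesis; the gluing across $W^s(x)$ is then forced by the mere continuity --- not differentiability --- of $\pi^u_x$.
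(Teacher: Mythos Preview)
Your proof is correct, but it organizes the argument quite differently from the paper. The paper does not set up an induction on $\mathrm{ind}(x)$; instead it works directly with the full stable-manifold stratification $\{W^s_j\}$. One takes the largest $h$ with $\gamma(I)\cap W^s_h\neq\emptyset$, so that $W^s_h$ is relatively open in the part of the stratification that $\gamma$ sees, picks a maximal open interval $J\subset\gamma^{-1}(W^s_h)$ on which $\gamma$ lands in a single $W^s(y)$ with $\mathrm{ind}(y)\leq\mathrm{ind}(x)$, and observes that $\mathscr{L}^s_x$ restricted to the leaf $W^s(y)=L^s_y(y)$ is a genuine smooth foliation because $\mathscr{L}^s_x$ is $\mathscr{L}^s_y$-smooth (Theorem~\ref{fitncpar}(iv)). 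So $\gamma(J)$ already sits in a single leaf $L^s_x(p)$; relative closedness of the leaves in $U_x(1)$ then forces $\partial_I J=\emptyset$, hence $J=I$. No inductive hypothesis, no $\mathscr{S}^s_y$, and no appeal to $\pi^u_x$ are needed.

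Your approach instead isolates the distinguished leaf $W^s(x)$, disposes of the complement via the inductive hypothesis applied to the sphere foliation $\mathscr{S}^s_y$ through your observation (b), and glues across $J=\gamma^{-1}(W^s(x))$ by the mere continuity of $\pi^u_x$. This is a genuinely different decomposition. What the paper's route buys is brevity and the fact that only the coarse foliations $\mathscr{L}^s_y$ enter; what yours buys is that the gluing step is completely elementary (no use of closedness of leaves or of the refinement $L^s_x(p)\cap U_z\subset W^s(z)$), and it makes explicit why $W^s(x)$ is the only place where the pathology of Example~\ref{cubic} could in principle occur. Two minor simplifications of your argument are available: you could invoke $\mathscr{L}^s_y$ instead of $\mathscr{S}^s_y$ in observation (b), which spares the dimension-count detour showing $q\notin W^u(y)$; and your endpoint dichotomy in the gluing step is cleaner if phrased as ``a component $K\subsetneq I$ of $I\setminus J$ has nonempty boundary in $I$, and that boundary lies in $J$''. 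Both proofs derive the $\mathscr{S}^s_x$ statement from the $\mathscr{L}^s_x$ one in exactly the same way, via your observation (b).
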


\begin{proof}
Let $I$ be an interval and $\gamma:I \rightarrow U_x(1)$ be a continuously differentiable map such that $\dot{\gamma}(t)$ belongs to the tangent space to the leaf of $\mathscr{L}^s_x$ through $\gamma(t)$ for every $t\in I$.
Consider the smooth stratification of $M$ given by the stable manifolds:
\[
W^s_j := \bigcup_{\substack{z\in \mathrm{stat}(\phi) \\ \ind(z)=n-j}} W^s(z), \quad 0\leq j \leq n, \qquad \dim W^s_j = j.
\]
Let $h$ be the largest integer such that $\gamma(I)\cap W^s_h \neq \emptyset$ and notice that $h\geq n- \ind(x)$ because 
$U_x(1)$ is covered by the stable manifolds of stationary points of index at most $ \ind(x)$. Then $\gamma$ is a curve in
\[
\bigcup_{0\leq j \leq h} W_j^s,
\]
and since $W_h^s$ is open in the latter set, the set of $t\in I$ such that $\gamma(t)\in W_h^s$ is a non-empty open subset of $I$. Denote by $J\subset I$ a maximal interval in this set. Then $\gamma(J)$ is contained in one connected component of $W^s_h$, that is, in the stable manifold of some stationary point $y$ with $\mathrm{ind}(y)=n-h \leq \mathrm{ind}(x)$.

By claim (iv) in Theorem \ref{fitncpar}, $\mathscr{L}^s_x$ refines the foliation $\mathscr{L}^s_y$ and is $\mathscr{L}^s_y$-smooth. In particular, $\mathscr{L}^s_x$ restricts to a smooth foliation of $W^s(y)\cap U_x(1)$. Theorefore, the curve $\gamma(J)\subset W^s(y)\cap U_x(1)$, which is tangent to the foliation $\mathscr{L}^s_x$, is actually contained in a leaf $L^s_x(p)$ of this foliation. The fact that each leaf of $\mathscr{L}^s_x$ is relatively closed in $U_x(1)$ implies that $J$ has no boundary points in $I$. Since $I$ is connected, we have $I=J$ and $\gamma(I)\subset L^s_x(p)$.

The analogous claim for $\mathscr{S}^s_x$ easily follows. Indeed, any curve tangent to $\mathscr{S}^s_x$ is in particular tangent to $\mathscr{L}^s_x$, so by what we have shown above it is contained in a leaf of $\mathscr{L}^s_x$. Being $\mathscr{L}^s_x$-smooth, the foliation $\mathscr{S}^s_x$ restricts to a smooth foliation of every leaf of $\mathscr{L}^s_x$. Therefore, the curve is contained in a leaf of $\mathscr{S}^s_x$.
\end{proof}

Theorem \ref{introthm2} in the Introduction follows immediately from Theorem \ref{fitncpar} and the above proposition.

\begin{rem}
In Section \ref{admsec}, we have seen that Cauchy problems induced by partially smooth vector fields might not have uniqueness. However, the Cauchy problems for the vector fields $v_j$ that we have constructed in this section do have unique solutions, albeit they need not be Lipschitz-continuous. This follows easily from the fact that these vector fields are tangent to the leaves of $\mathscr{S}^s_y|_{U_y(1/2)}$, on which they are smooth, and from the property of these foliations that is established in Proposition \ref{uniqueness}.
\end{rem}

\begin{rem}
\label{smooth?}
In what follows, it would be enough to have a version of Theorem \ref{introthm2} with lower regularity on the foliations. Indeed, it would be enough to require the foliations to have $C^1$ leaves with tangent spaces varying continuously, and such that the restriction of  each foliation of dimension $k$ to a leaf of a foliation of higher dimension is a genuine $C^1$ foliation. It is conceivable that compatible foliations with this lower regularity can be constructed also assuming the flow to be of class $C^1$. However, our approach would still need some high regularity on the flow, namely a flow of class $C^{n-1}$, where $n=\dim M$. The reason is that our foliations are constructed by integrating vector fields that are obtained from smooth ones by projection on the leaves that have already been built. Therefore, in the inductive construction the vector fields we obtain have one degree of regularity less than the leaves that have already been built. 
\end{rem}

\section{The flow near the boundary of an unstable manifold} 
\label{nearbdrysec}

Let $x$ be a stationary point of $\phi$ of Morse index $k$. The aim of this section is to construct a positively complete local flow $\theta$
near on the complement of a compact set in $W^u(x)$ all of whose forward orbits $\theta^t(p)$ converge to some point in $\omega_{\theta}(p)$ in $\overline{W^u(x)} \setminus W^u(x)$, for a suitable continuous map $p\mapsto \omega_{\theta}(p)$. This flow will be induced by a continuous vector field $Y$ on $\dot{W}^u(x):= W^u(x) \setminus \{x\}$.

The points in $\dot{W}^u(x)$ belong to stable manifolds of stationary points of $\phi$ of index less than $k$. Therefore, $\dot{W}^u(x)$ is covered by the $\phi$-invariant open sets
\[
U_i := \dot{W}^u(x) \cap \bigcup_{\substack{y\in \mathrm{stat}(\phi) \\ \ind(y)=i}} U_y, \qquad i=0,\dots,k-1,
\]
where $U_y$ is the tubular neighborhood of the unstable manifold $W^u(y)$ that is given by Theorem \ref{introthm2} in the introduction. By the same theorem, each $U_i$ carries the $(k-i)$-dimensional partially smooth foliation
\[
\mathscr{L}^s_i := \bigcup_{\substack{y\in \mathrm{stat}(\phi) \\ \ind(y)=i}} \{L^s_y(p) \cap W^u(x)\}_{p\in W^u(y)}.
\]
When $0\leq i < j <k$, the foliation $\mathscr{L}^s_j$ refines the foliation $\mathscr{L}^s_i$ and is $\mathscr{L}^s_i$-smooth. Moreover, each foliation $\mathscr{L}_i^s$ is refined by the following $(k-i-1)$-dimensional partially smooth foliation 
\[
\mathscr{S}^s_i := \bigcup_{\substack{y\in \mathrm{stat}(\phi) \\ \ind(y)=i}} \{S^s_y(p,a) \cap W^u(x)\}_{(p,a)\in \Lambda_y}, \quad \Lambda_y \subset W^u(y) \times \R,
\]
of $U_i$, which is $\mathscr{L}_i^s$-smooth. The leaves of these foliations are denoted by 
\[
L^s_i(p) \mbox{ for } p\in \bigcup_{\ind (y)=i} W^u(y), \quad \mbox{and} \quad S^s_i(p,a) \mbox{ for } (p,a)\in \bigcup_{\ind (y)=i} \Lambda_y \subset \bigcup_{\ind (y)=i} W^u(y) \times \R.
\]
If $0\leq i < j <k$ then the foliation $\mathscr{L}^s_j$ refines the foliation $\mathscr{S}^s_i$. 

Furthermore, Proposition \ref{coerfu}, together with Remark \ref{fund}, implies that there are continuous functions
\[
\rho_i: U_i \rightarrow (0,+\infty)
\]
with the following properties:
\begin{enumerate}[($\rho$-1)]
\item If $p$ belongs to the unstable manifold of some stationary point $y$ of index $i$ and $(p_h)\subset U_i$ converges to $p$, then $\rho_i(p_h)\rightarrow 0$; moreover, for every neighborhood $N$ of $p$ in $M$ there exists an open neighborhood $P$ of $p$ in $W^u(y)$ and a positive number $\epsilon>0$ such that the open subset of $W^u(x)$ given by
\[
 \rho_i^{-1}((0,\epsilon))  \cap \bigcup_{p'\in P} L^s_i(p') 
\]
is contained in $N$.
\item If the sequence $(p_h)\subset U_i$ converges to a point in the relative boundary of $U_i$ in $W^u(x)$ then $\rho_i(p_h) \rightarrow +\infty$.
\item $\rho_i$ is constant on each leaf of $\mathscr{S}^s_i$.
\item $\rho_i$ is $\mathscr{L}^s_j$-smooth for every $j\in \{0,\dots,k-1\}$.
\item The differential of the restriction of $\rho_i$ to each leaf of $\mathscr{L}^s_i$ never vanishes.
\item $\rho_i$ is  differentiable along the flow $\phi$ with
\[
\frac{d}{dt} \rho_i \circ \phi^t \Bigr|_{t=0} = - \rho_i.
\]
\end{enumerate}

It will be convenient to consider the functions $\rho_i$ as defined on the whole of $W^u(x)$, by setting $\rho_i=+\infty$ outside of $U_i$. Thanks to ($\rho$-2), these $(0,+\infty]$-valued functions are continuous on $W^u(x)$.

The vector field $Y$ will be constructed by patching the vector fields on $U_i$ that are constructed in the next proposition.

\begin{lem} 
\label{locdata}
There are continuous vector fields $Y_i$ on $U_i$ that are tangent to $W^u(x)$ and have the following properties:
\begin{enumerate}[(i)]
\item The vector field $Y_i$ is tangent to the foliation $\mathscr{L}^s_i$ and is $\mathscr{L}^s_j$-smooth for every $j\in \{0,\dots,k-1\}$.
\item $d\rho_i[Y_i] = -\rho_i$ on $U_i$.
\item If $i<j$ then $d\rho_i[Y_j] = 0$ on $U_i \cap U_j$.
\end{enumerate}
\end{lem}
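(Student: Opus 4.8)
The plan is to build $Y_i$ as a normalized leafwise gradient of $\rho_i$. First I would fix, once and for all, a smooth Riemannian metric $g$ on $W^u(x)$ (no invariance is needed). For each $i\in\{0,\dots,k-1\}$ let $\nabla_i\rho_i$ be the gradient of $\rho_i$ along the leaves of $\mathscr{L}^s_i$ with respect to $g$: on each leaf $L$ of $\mathscr{L}^s_i$ the function $\rho_i|_L$ is smooth by ($\rho$-4), and $\nabla_i\rho_i$ is defined on $L$ to be the $g|_L$-gradient of $\rho_i|_L$; equivalently, $\nabla_i\rho_i(p)\in T_pL^s_i(p)$ is characterized by $g_p(\nabla_i\rho_i(p),w)=d\rho_i[w]$ for every $w\in T_pL^s_i(p)$. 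Property ($\rho$-5) says $\nabla_i\rho_i$ never vanishes, so I can set
\[
Y_i:=-\frac{\rho_i}{g(\nabla_i\rho_i,\nabla_i\rho_i)}\,\nabla_i\rho_i\qquad\text{on }U_i,
\]
which is a vector field on $U_i$ tangent to the leaves of $\mathscr{L}^s_i$, hence tangent to $W^u(x)$. Property (ii) then follows at once: since $Y_i(p)\in T_pL^s_i(p)$, the derivative of $\rho_i$ along $Y_i$ at $p$ equals $g_p(\nabla_i\rho_i(p),Y_i(p))=-\rho_i(p)$.

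Property (iii) will be purely formal. If $i<j$, the compatibility between the foliations stated above in this section gives that $\mathscr{L}^s_j$ refines $\mathscr{S}^s_i$, so every leaf of $\mathscr{L}^s_j$ is contained in a leaf of $\mathscr{S}^s_i$; by ($\rho$-3) the function $\rho_i$ is constant there, hence constant on each leaf of $\mathscr{L}^s_j$. Since $\rho_i$ is $\mathscr{L}^s_j$-smooth by ($\rho$-4) and $Y_j$ is tangent to $\mathscr{L}^s_j$, the derivative $d\rho_i[Y_j]$ vanishes identically on $U_i\cap U_j$.

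The substance of the argument is the regularity claim in (i), that $Y_i$ is $\mathscr{L}^s_j$-smooth for every $j\in\{0,\dots,k-1\}$. I would first check that $\nabla_i\rho_i$, and therefore $Y_i$, is $\mathscr{L}^s_i$-smooth: it is obtained from the $\mathscr{L}^s_i$-smooth function $\rho_i$ (by ($\rho$-4)) and the smooth --- hence $\mathscr{L}^s_i$-smooth --- metric $g$ by the leafwise operations of forming the leafwise differential and raising an index with $g$ restricted to the leaf, followed by division by the nowhere-zero leafwise-smooth function $g(\nabla_i\rho_i,\nabla_i\rho_i)$; all of these preserve $\mathscr{L}^s_i$-smoothness, and in particular $Y_i$ is continuous. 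For $j\ge i$ the foliation $\mathscr{L}^s_j$ refines $\mathscr{L}^s_i$, so an $\mathscr{L}^s_i$-smooth map is a fortiori $\mathscr{L}^s_j$-smooth on $U_i\cap U_j$ (Section \ref{admsec}), which disposes of this range. For $j<i$, instead $\mathscr{L}^s_i$ refines $\mathscr{L}^s_j$ and is $\mathscr{L}^s_j$-smooth; this means that $\mathscr{L}^s_i$ restricts to a genuine smooth foliation of every leaf of $\mathscr{L}^s_j$ and that $p\mapsto T_pL^s_i(p)$ is $\mathscr{L}^s_j$-smooth. Reading the definition of $Y_i$ in a partially smooth parametrization of $\mathscr{L}^s_j$, and using that $\rho_i$ is $\mathscr{L}^s_j$-smooth (again ($\rho$-4)), that $g$ is smooth, and that the $g$-orthogonal projection onto $TL^s_i$ depends --- in the $\mathscr{L}^s_j$-leaf directions --- smoothly on $g$ and on the distribution $TL^s_i$, one concludes that $\nabla_i\rho_i$ and then $Y_i$ are $\mathscr{L}^s_j$-smooth. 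I expect this last verification, though routine once unwound, to be the main obstacle: one must carefully keep track of which of the foliations $\mathscr{L}^s_\bullet$ refines which and invoke the corresponding smoothness relation in each case.
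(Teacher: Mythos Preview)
Your proposal is correct and follows essentially the same approach as the paper: fix a smooth metric on $W^u(x)$, take the leafwise gradient $\nabla_i\rho_i$ of $\rho_i$ along $\mathscr{L}^s_i$, and normalize it so that $d\rho_i[Y_i]=-\rho_i$; then (ii) is by construction, (iii) comes from $\mathscr{L}^s_j$ refining $\mathscr{S}^s_i$ together with ($\rho$-3), and (i) from ($\rho$-4) and the smoothness of the metric. Your treatment of (i) is in fact more explicit than the paper's --- the paper simply invokes ($\rho$-4) and the smooth metric without splitting into the cases $j\ge i$ and $j<i$ --- but the content is the same.
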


\begin{proof}
We fix a smooth metric on $W^u(x)$. This metric induces a smooth metric on each leaf of $\mathscr{L}^s_i$ and we denote by $\nabla \rho_i$ the leafwise gradient of the function $\rho_i$ with respect to it. The vector field $\nabla \rho_i$ is tangent to the leaves of $\mathscr{L}^s_i$ and, thanks to ($\rho$-3) and ($\rho$-5), it  never vanishes and is transverse to the leaves of $\mathscr{S}_i^s$. We define the vector field $Y_i$ to be the unique vector field that is parallel to $\nabla \rho_i$ and is normalized so that
\[
d\rho_i[Y_i] = - \rho_i.
\]
This vector field satisfies (i) thanks to the regularity property ($\rho$-4) of the function $\rho_i$ and to the smoothness of the metric. Property (ii) holds by construction. If $i<j$ then the foliation $\mathscr{L}^s_j$ refines the foliation $\mathscr{S}^s_i$. Together with the fact that $\rho_i$ is constant on the leaves of $\mathscr{S}^s_i$ - by ($\rho$-3) - and $Y_j$ is tangent to the leaves of $\mathscr{L}^s_j$, this implies (iii).
\end{proof}

We denote by $V_i$ the following open subset of $\dot{W}^u(x)$:
\[
V_i := \{p\in U_i \mid \rho_i(p) < 1\}.
\]
By ($\rho$-2) we have
\begin{equation}
\label{Vjei}
\overline{V_i} \cap \dot{W}^u(x) = \overline{V_i} \cap W^u(x) = \{  p\in U_i \mid \rho_i(p) \leq 1\}.
\end{equation}
Let $\chi_i: \dot{W}^u(x) \rightarrow [0,1]$, $0\leq i \leq k-1$, be smooth functions such that:
\begin{enumerate}[($\chi$-1)]
\item $\chi_i$ is supported in $U_i$.
\item For every $i<j$, $\chi_i=0$ on $\overline{V_j}\cap \dot{W}^u(x)$.
\item $\chi_i>0$ on $(\overline{V_i} \setminus \displaystyle \bigcup_{j>i} \overline{V_j})\cap \dot{W}^u(x)$.
\end{enumerate}
Such a family of functions is easy to construct: For every $i\in \{0,\dots,k-1\}$ choose a smooth function $\psi_i:\dot{W}^u(x) \rightarrow [0,1]$ such that
\[
\mathrm{supp} \, \psi_i \subset U_i, \qquad \psi_i^{-1}(\{1\}) = \overline{V_i} \cap W^u(x),
\]
and define
\[
\chi_i:= \psi_i \prod_{j>i} (1-\psi_i), \qquad \forall i\in \{0,\dots,k\}.
\]
The functions $\psi_i$ exist because $\overline{V_i}\cap W^u(x)$ and the complement of $ U_i$ in $\dot{W}^u(x)$ are disjoint closed subsets of $\dot{W}^u(x)$, thanks to (\ref{Vjei}), and because every closed subsets of a manifold is the set of zeroes of a smooth function. 

We now consider the following tangent vector field on $\dot{W}^u(x)$:
\[
Y:= \sum_{i=0}^{k-1} \chi_i Y_i.
\]
This vector field is continuous and $\mathscr{L}_i^s$-smooth for every $i\in \{0,\dots,k-1\}$. It need not be Lipschitz-continuous, but its Cauchy problems do have unique local solutions, as we show in the next lemma. 

\begin{lem}
The vector field $Y$ is uniquely integrable on $\dot{W}^u(x)$.
\end{lem}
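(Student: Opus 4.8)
The plan is to reduce unique integrability of $Y$ to the unique integrability on the leaves of one of the foliations $\mathscr{L}^s_i$, using the same mechanism that was pointed out for the vector fields $v_j$: at each point $Y$ is tangent to a leaf on which it is smooth, and the leaves of $\mathscr{L}^s_i$ satisfy the curve-uniqueness property of Proposition~\ref{uniqueness}. First I would fix a point $p\in\dot W^u(x)$ and let $i_0$ be the smallest index $i$ with $p\in V_i$ (equivalently, the smallest $i$ with $\chi_i(p)>0$; such an $i$ exists because the $V_i$ cover $\dot W^u(x)$ up to possibly a single point, and in any case one can argue on a neighborhood where some $\chi_i$ is positive — in the edge case where $p$ lies in no $\overline{V_j}$, $Y$ vanishes near $p$ and integrability is trivial). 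The key observation is that near $p$ every term $\chi_i Y_i$ entering the sum is tangent to the leaf $L^s_{i_0}(p)$ of $\mathscr{L}^s_{i_0}$ through $p$: indeed $\chi_i$ is supported in $U_i$, so only $i\ge i_0$ contribute, and for $i\ge i_0$ the foliation $\mathscr{L}^s_i$ refines $\mathscr{L}^s_{i_0}$, hence $Y_i$, being tangent to $\mathscr{L}^s_i$ by Lemma~\ref{locdata}(i), is a fortiori tangent to $\mathscr{L}^s_{i_0}$. Therefore $Y$ is tangent to $\mathscr{L}^s_{i_0}$ on a neighborhood of $p$.

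Next I would invoke the fact, also from Lemma~\ref{locdata}(i), that each $Y_i$ is $\mathscr{L}^s_j$-smooth for every $j\in\{0,\dots,k-1\}$, and in particular $\mathscr{L}^s_{i_0}$-smooth; since the $\chi_i$ are smooth, $Y$ is $\mathscr{L}^s_{i_0}$-smooth as well. Combined with the previous paragraph, the restriction of $Y$ to the leaf $L:=L^s_{i_0}(p)$ is a smooth tangent vector field on the smooth manifold $L$, so its Cauchy problem with initial condition $p$ has a unique solution $\gamma$ taking values in $L$. This produces a solution of $\dot\gamma=Y(\gamma)$ through $p$; it remains to show it is the \emph{only} one, i.e.\ that no solution can leave the leaf.

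For uniqueness, suppose $\eta:I\to\dot W^u(x)$ is any $C^1$ solution with $\eta(t_0)=p$. Then $\dot\eta(t)=Y(\eta(t))$ lies in $T_{\eta(t)}L^s_{i_0}(\eta(t))$ for every $t$, because we have just shown $Y$ is everywhere tangent to $\mathscr{L}^s_{i_0}$ (the argument of the first paragraph applies at every point of $U_{i_0}$, and outside $U_{i_0}$ the solution stays in $U_{i_0}$ for as long as it starts there by a standard closed–open argument using that $Y$ vanishes where all $\chi_i$ with $i$-leaf through the point vanish — more simply, one runs the argument on the maximal subinterval on which $\eta$ stays in the domain where $\mathscr{L}^s_{i_0}$ is defined). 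Hence $\eta$ is a $C^1$ curve in $U_{i_0}(1)$ tangent to the leaves of $\mathscr{L}^s_{i_0}$, so by Proposition~\ref{uniqueness} it is fully contained in a single leaf of $\mathscr{L}^s_{i_0}$, necessarily $L^s_{i_0}(p)$. On that leaf $Y$ is smooth, so $\eta=\gamma$ by the classical uniqueness theorem. This proves unique integrability.

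The main obstacle I anticipate is bookkeeping around the index $i_0$ and the possibility that a solution wanders between the sets $U_i$: one must make sure that "$Y$ is tangent to $\mathscr{L}^s_{i_0}$ near $p$" can be upgraded to a statement valid along the whole solution, so that Proposition~\ref{uniqueness} applies globally rather than only locally. The clean way to handle this is to note that $U_{i_0}(1)$ is $\phi$-invariant and relatively closed-from-inside in the relevant sense, and that the set of $t$ for which $\eta(t)\in U_{i_0}$ and the tangency holds is open (by the local argument) and closed in $I$ (leaves are relatively closed, by the property used in Proposition~\ref{uniqueness}); connectedness of $I$ then finishes it. Everything else is a direct citation of Lemma~\ref{locdata}, the refinement relations among the $\mathscr{L}^s_i$ recorded at the start of this section, and Proposition~\ref{uniqueness}.
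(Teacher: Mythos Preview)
Your strategy is exactly the paper's: find an index $i$ such that, on a neighborhood of $p$, only terms $\chi_j Y_j$ with $j\ge i$ appear in $Y$; conclude that $Y$ is tangent to $\mathscr{L}^s_i$ there; invoke Proposition~\ref{uniqueness} to force any $C^1$ solution into the leaf $L^s_i(p)$; and then use that $Y$ is smooth on that leaf. However, your choice of the index $i_0$ is wrong and the surrounding claims do not hold. You take $i_0$ to be ``the smallest $i$ with $p\in V_i$, equivalently the smallest $i$ with $\chi_i(p)>0$''. These two conditions are not equivalent, and neither yields what you need: from $p\in V_{i_0}$ with $i_0$ minimal you cannot conclude that $\chi_j$ vanishes near $p$ for $j<i_0$, because $\mathrm{supp}\,\chi_j\subset U_j$ and $U_j$ is generally strictly larger than $\overline{V_j}$, so $p\notin V_j$ does not force $\chi_j\equiv 0$ nearby. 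Your claim that the $V_i$ cover $\dot W^u(x)$ ``up to a single point'' is also false (their union $V$ misses a compact set, not a point), and the fallback assertion that $Y$ vanishes near any $p$ lying in no $\overline{V_j}$ is unsupported.

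The paper repairs this by covering $\dot W^u(x)$ with $U_0$ together with the open sets $U_i\setminus\bigcup_{j<i}\mathrm{supp}\,\chi_j$ for $1\le i\le k-1$; equivalently, one should take $i_0$ to be the smallest index with $p\in U_{i_0}$, so that $p\notin U_j\supset\mathrm{supp}\,\chi_j$ for all $j<i_0$. On $U_0$ the leaves of $\mathscr{L}^s_0$ are full-dimensional in $W^u(x)$, so $\mathscr{L}^s_0$-smoothness of $Y$ is ordinary smoothness and local uniqueness is immediate. On each $U_i\setminus\bigcup_{j<i}\mathrm{supp}\,\chi_j$ one has $Y=\sum_{j\ge i}\chi_j Y_j$, tangent to $\mathscr{L}^s_i$, and your argument via Proposition~\ref{uniqueness} then goes through. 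Finally, your ``main obstacle'' about solutions wandering among the $U_i$ is not an obstacle at all: the statement to be proved is \emph{local} uniqueness of the Cauchy problem, so it suffices to argue on a small neighborhood of $p$; global uniqueness of maximal solutions then follows automatically.
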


\begin{proof}
We wish to prove that for every $u\in \dot{W}^u(x)$ the Cauchy problem
\[
\left\{ \begin{array}{l} \gamma'(t) = Y(\gamma(t)) \\ \gamma(0)=u \end{array} \right.
\]
has a unique local solution. Being $\mathscr{L}^s_0$-smooth, the vector field $Y$ is smooth on the open set $U_0$, so we do have local uniqueness when $u\in U_0$. The complement of $U_0$ in $\dot{W}^u(x)$ is covered by the open sets
\begin{equation}
\label{theset}
U_i \setminus \bigcup_{j<i} \mathrm{supp}\, \chi_j,
\end{equation}
for $i\in \{1,\dots,k-1\}$. On the above set, the vector field $Y$ has the form
\[
Y = \sum_{j\geq i} \chi_j Y_j.
\]
Since $Y_j$ is tangent to the foliation $\mathscr{L}^s_j$, which refines the foliation $\mathscr{L}^s_i$ for every $j>i$, the vector field $Y$ is tangent to the leaves of $\mathscr{L}^s_i$ on the set (\ref{theset}). Therefore, all local solutions of the above Cauchy problem with initial condition $u$ in the set (\ref{theset}) are tangent to the leaves of $\mathscr{L}^s_i$. By Proposition statement (iii) in Theorem \ref{introthm2}, these local solutions must be contained in the leaf of $\mathscr{L}^s_i$ through $u$. From the fact that the restriction of $Y$ to this leaf is smooth, we deduce that the local solution is unique.
\end{proof}

Being continuous and uniquely integrable, $Y$ defines a continuous local flow on $\dot{W}^u(x)$, that we denote by 
\[
\theta: \mathrm{dom} (\theta) \rightarrow \dot{W}^u(x), \qquad (t,p) \mapsto \theta^t(p),
\]
where
\[
\mathrm{dom} (\theta) \subset \R \times \dot{W}^u(x)
\]
is the maximal domain of existence.

\begin{lem}
\label{olem}
For every $p\in \overline{V_i}$ we have
\begin{equation}
\label{laines}
\frac{d}{dt} \rho_i \circ \theta^t(p) \Bigr|_{t=0} = - \chi_i(p) \rho_i(p) \leq 0.
\end{equation}
In particular, $\overline{V_i}$ is positively invariant under $\theta$.
\end{lem}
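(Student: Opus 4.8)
The plan is to read off (\ref{laines}) from the definition $Y=\sum_{j=0}^{k-1}\chi_jY_j$ together with Lemma \ref{locdata} and the support conditions ($\chi$-1)--($\chi$-3), and then to derive the positive invariance of $\overline{V_i}$ by a barrier argument organized as a downward induction on the index. First I would check that, for $p\in\overline{V_i}\cap\dot{W}^u(x)$, the curve $t\mapsto\rho_i(\theta^t(p))$ is differentiable at $t=0$ with $\frac{d}{dt}\rho_i\circ\theta^t(p)\Bigr|_{t=0}=d\rho_i(p)[Y(p)]$. By (\ref{Vjei}) the point $p$ lies in $U_i$, and $\rho_i$ is $\mathscr{L}^s_j$-smooth near $p$ for every $j$ by ($\rho$-4). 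If $p\in U_0$, both $\rho_i$ and $Y$ are genuinely smooth near $p$; otherwise $p$ belongs to one of the open sets $U_{i'}\setminus\bigcup_{j<i'}\supp\chi_j$ used in the proof that $Y$ is uniquely integrable, on which $Y$ is tangent to $\mathscr{L}^s_{i'}$, so that $\theta^t(p)$ stays in a single leaf of $\mathscr{L}^s_{i'}$ for small $t$ by statement (iii) of Theorem \ref{introthm2}, and $\rho_i$ is smooth on that leaf. Either way $\rho_i\circ\theta$ is $C^1$ near $0$, and $\frac{d}{dt}\rho_i\circ\theta^t(p)\Bigr|_{t=0}=\sum_{j=0}^{k-1}\chi_j(p)\,d\rho_i(p)[Y_j(p)]$, where each term with $\chi_j(p)>0$ is meaningful since then $p\in\supp\chi_j\subset U_j$, hence $p\in U_i\cap U_j$.

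Next I would evaluate this sum by cases. For $j<i$, property ($\chi$-2) gives $\chi_j\equiv0$ on $\overline{V_i}\cap\dot{W}^u(x)$, so these terms vanish at $p$. For $j=i$, Lemma \ref{locdata}(ii) gives $d\rho_i[Y_i](p)=-\rho_i(p)$. For $j>i$ with $\chi_j(p)>0$ we have $p\in U_i\cap U_j$, whence Lemma \ref{locdata}(iii) gives $d\rho_i[Y_j](p)=0$. Therefore the sum equals $-\chi_i(p)\rho_i(p)$, which is $\le0$ because $\chi_i\ge0$ and $\rho_i>0$ on $U_i$; this proves (\ref{laines}).

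For the positive invariance I would argue by induction on the index, downwards from $k-1$. Fix $p\in\overline{V_i}\cap\dot{W}^u(x)$, suppose $\theta^t(p)$ is defined on $[0,T]$, and let $\tau$ be the supremum of the $t\in[0,T]$ for which $\theta^s(p)\in\overline{V_i}$ for all $s\in[0,t]$. Since $\overline{V_i}\cap\dot{W}^u(x)=\{\,\rho_i\le1\,\}$ is relatively closed in $\dot{W}^u(x)$ by (\ref{Vjei}) and ($\rho$-2), the point $q:=\theta^\tau(p)$ lies in $\overline{V_i}$, hence in the open set $U_i$. Assume $\tau<T$; then $\rho_i(q)=1$, for otherwise the orbit would remain in $V_i$ past $\tau$. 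By the computation above, $\rho_i\circ\theta$ is $C^1$ near $\tau$ and, via the flow property, $\frac{d}{dt}\rho_i\circ\theta^t\Bigr|_{t=\tau}=-\chi_i(q)\rho_i(q)=-\chi_i(q)$. If $\chi_i(q)>0$ this derivative is negative, so $\rho_i(\theta^{\tau+\varepsilon}(p))<1$ for small $\varepsilon>0$, i.e.\ $\theta^{\tau+\varepsilon}(p)\in V_i$, contradicting the maximality of $\tau$. If $\chi_i(q)=0$, then ($\chi$-3) forces $q\in\overline{V_j}$ for some $j>i$, so by the inductive hypothesis $\theta^t(p)\in\overline{V_j}$ for all $t\in[\tau,T]$; on $\overline{V_j}$ every term of $\sum_{j''}\chi_{j''}\,d\rho_i[Y_{j''}]$ vanishes — the terms with $j''\le i<j$ by ($\chi$-2) applied to $\overline{V_j}$, the terms with $j''>i$ by Lemma \ref{locdata}(iii) — so $\rho_i\circ\theta$ is constantly equal to $1$ just after $\tau$, again contradicting maximality. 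Hence $\tau=T$, and $\overline{V_i}$ is positively invariant. In the base case $i=k-1$ the second alternative cannot occur, since $\bigcup_{j>k-1}\overline{V_j}=\emptyset$ and ($\chi$-3) then gives $\chi_{k-1}>0$ on all of $\overline{V_{k-1}}\cap\dot{W}^u(x)$, so no circularity arises.

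The only genuinely delicate point is the second alternative in the last paragraph: the inequality (\ref{laines}) holds only at points of $\overline{V_i}$ and not on a one-sided neighborhood of its topological boundary from outside, so the invariance does not reduce to the usual one-line barrier argument, and it is the nesting of the sets $\overline{V_j}$ together with the downward induction that closes the loop. The differentiability bookkeeping in the first paragraph, although it uses the low regularity of $Y$ and the partial smoothness of $\rho_i$ in an essential way, is routine once statement (iii) of Theorem \ref{introthm2} is invoked.
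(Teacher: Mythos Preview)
Your derivation of the identity (\ref{laines}) is correct and follows the paper's own argument: kill the terms with $j<i$ via ($\chi$-2), use Lemma \ref{locdata}(ii) for $j=i$, and use Lemma \ref{locdata}(iii) for $j>i$. Your care about the differentiability of $t\mapsto\rho_i(\theta^t(p))$ is appropriate and is tacitly assumed in the paper.

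Where you diverge from the paper is in the positive invariance of $\overline{V_i}$. The paper dispatches this in one line, observing that $\overline{V_i}$ is a sublevel of $\rho_i$ and that (\ref{laines}) makes $\rho_i$ non-increasing along $\theta$ on this set. You correctly point out that this is not the standard barrier argument: the inequality (\ref{laines}) is only established on $\overline{V_i}$, not on a one-sided neighborhood of its boundary, so an orbit reaching $\{\rho_i=1\}$ at a point where $\chi_i$ vanishes could in principle escape with zero first derivative. Your downward induction on $i$ closes this gap cleanly: if $\chi_i(q)=0$ at the exit point $q$, then ($\chi$-3) forces $q\in\overline{V_j}$ for some $j>i$, the inductive hypothesis traps the orbit in $\overline{V_j}$, and on $\overline{V_j}$ the combination of ($\chi$-2) and Lemma \ref{locdata}(iii) makes $\rho_i$ constant along the orbit near $q$, contradicting maximality of $\tau$. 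The base case $i=k-1$ is handled because ($\chi$-3) then gives $\chi_{k-1}>0$ on all of $\overline{V_{k-1}}$. This is a genuine strengthening of the paper's argument; your induction makes rigorous what the paper asserts as immediate.

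One minor point: when you argue that $d\rho_i[Y_{j''}]$ vanishes along the orbit after time $\tau$, you need the orbit to remain in $U_i$ (so that $\rho_i$ and its differential are defined and Lemma \ref{locdata}(iii) applies on $U_i\cap U_{j''}$). Since $q\in\overline{V_i}\subset U_i$ and $U_i$ is open, this holds for small positive time, which is all you need to contradict the maximality of $\tau$.
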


\begin{proof}
By ($\chi$-2), $\chi_j(p)=0$ for every $j<i$ and hence
\[
\frac{d}{dt} \rho_i \circ \theta^t(p) \Bigr|_{t=0} = d\rho_i(p)[Y(p)] = \sum_{j\geq i} \chi_j(p) d\rho_i(p)[Y_j(p)].
\]
By Lemma \ref{locdata} (ii) and (iii), the latter quantity equals
\[
- \chi_i(p) \rho_i(p),
\]
proving (\ref{laines}). The inequality in (\ref{laines}) and the fact that $\overline{V_i}$ is a sublevel of the function $\rho_i: W^u(x) \rightarrow (0,+\infty]$ imply that $\overline{V_i}$ is positively invariant under $\theta$.
\end{proof}

We now introduce the following open subset of $\dot{W}^u(x)$:
\[
V := \bigcup_{i=0}^{k-1} V_i.
\]
Notice that $V$ is a ``neighborhood of the boundary of $W^u(x)$'', meaning that the difference $W^u(x) \setminus V$ is compact.

\begin{prop}
\label{converge}
The forward orbit by $\theta$ of each $p\in \overline{V} \cap W^u(x)$ is defined for every $t\in [0,+\infty)$. Moreover, for every such $p$ there exists a point $\omega_{\theta}(p)\in \overline{W^u(x)} \setminus W^u(x)$ such that
\[
\lim_{t\rightarrow +\infty} \theta^t(p) = \omega_{\theta}(p),
\]
and the map
\[
\omega_{\theta} : \overline{V}\cap W^u(x) \rightarrow \overline{W^u(x)} \setminus W^u(x)
\]
is continuous.
\end{prop}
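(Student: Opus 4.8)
The plan is to establish completeness and convergence at once by locating the ``dominant stratum'' into which the forward orbit settles. Fix $p\in\overline V\cap W^u(x)$. Since $\overline V=\bigcup_{i=0}^{k-1}\overline{V_i}$ and each $\overline{V_i}$ is positively $\theta$-invariant by Lemma~\ref{olem}, once the orbit enters some $\overline{V_i}$ it stays there; let $m^{*}$ be the largest index $i$ with $\theta^{t}(p)\in\overline{V_i}$ for all large $t$ in the maximal forward interval (it exists since $p\in\overline{V_i}$ for at least one $i$). Then there is $t^{*}$ with $\theta^{t}(p)\in\overline{V_{m^{*}}}$ for $t\ge t^{*}$, while $\theta^{t}(p)$ never enters $\overline{V_j}$ for $j>m^{*}$, so for $t\ge t^{*}$
\[
\theta^{t}(p)\in\Bigl(\overline{V_{m^{*}}}\setminus\bigcup_{j>m^{*}}\overline{V_j}\Bigr)\cap\dot W^u(x),
\]
where $\chi_{m^{*}}>0$ by ($\chi$-3) and $\chi_i\equiv 0$ for $i<m^{*}$ by ($\chi$-2). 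Hence $Y=\sum_{i\ge m^{*}}\chi_iY_i$ along this part of the orbit; as each $\mathscr L^s_i$ with $i\ge m^{*}$ refines $\mathscr L^s_{m^{*}}$, the $C^{1}$ curve $t\mapsto\theta^{t}(p)$ is tangent to $\mathscr L^s_{m^{*}}$ for $t\ge t^{*}$, so by Proposition~\ref{uniqueness} it lies in a single leaf $L=L^s_{m^{*}}(p_{0})$, the trace on $W^u(x)$ of a leaf $L^s_y(p_{0})$ of $\mathscr L^s_y$ for one stationary point $y$ with $\ind(y)=m^{*}$ and some $p_{0}\in W^u(y)$. On $L$ the field $Y$ is smooth and $\rho_{m^{*}}|_L=\rho_y|_L$ is a smooth submersion whose level sets are the spheres of $\mathscr S^s_y$; moreover, by Lemma~\ref{locdata}(ii)--(iii) and ($\chi$-2), $\frac{d}{dt}\rho_{m^{*}}(\theta^{t}(p))=-\chi_{m^{*}}(\theta^{t}(p))\,\rho_{m^{*}}(\theta^{t}(p))<0$ for $t\ge t^{*}$, so $\rho_{m^{*}}\circ\theta^{\cdot}(p)$ decreases to some $\ell\ge 0$.

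For completeness: since $\rho_{m^{*}}\le 1$ on $\overline{V_{m^{*}}}$, for $t\ge t^{*}$ the orbit stays in the compact set $\{q\in L^s_y(p_{0}):\ell\le\rho_y(q)\le 1\}$. If the forward maximal time $T$ were finite, $\theta^{t}(p)$ would have a limit point $q^{*}$ as $t\to T^{-}$; if $q^{*}\in\dot W^u(x)$, the continuous local flow of the uniquely integrable field $Y$ near $q^{*}$ would extend the orbit past $T$, a contradiction, so $q^{*}\in W^u(z)$ for some $z$ with $\ind(z)=i_{0}$. A neighbourhood of $q^{*}$ meets $W^u(x)$ inside $U_{i_{0}}$, so ($\rho$-1) applied to $\rho_{i_{0}}$ gives $\rho_{i_{0}}(\theta^{t_n}(p))\to 0$ along some $t_n\to T$. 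This is impossible: for $i_{0}\le m^{*}$ the value $\rho_{i_{0}}\circ\theta^{\cdot}(p)$ is bounded away from $0$ on the finite interval $[t^{*},T]$ --- by the exponential lower bound $\rho_{m^{*}}(\theta^{t}(p))\ge e^{-(t-t^{*})}\rho_{m^{*}}(\theta^{t^{*}}(p))$ when $i_{0}=m^{*}$, and because $\rho_{i_{0}}$ is (eventually) constant along the orbit, by Lemma~\ref{locdata}(iii) together with ($\chi$-2), when $i_{0}<m^{*}$ --- while for $i_{0}>m^{*}$ the orbit avoids $\overline{V_{i_{0}}}$, whence $\rho_{i_{0}}(\theta^{t_n}(p))>1$. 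Therefore $T=+\infty$.

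For convergence I claim $\ell=0$. Otherwise the $\omega$-limit set $\Omega$ of the forward orbit is nonempty, compact, contained in $\overline{V_{m^{*}}}$, with $\Omega\cap\dot W^u(x)$ $\theta$-invariant and $\rho_{m^{*}}\equiv\ell>0$ on it, so $\Omega\cap\dot W^u(x)\subset U_{m^{*}}$. Taking the largest $J^{*}$ with $\Omega\cap\overline{V_{J^{*}}}\cap\dot W^u(x)\ne\emptyset$ (so $J^{*}\ge m^{*}$) and $q^{*}$ in that set, the forward orbit of $q^{*}$ stays in $(\overline{V_{J^{*}}}\setminus\bigcup_{j>J^{*}}\overline{V_j})\cap\dot W^u(x)$ where $\chi_{J^{*}}>0$, so $\rho_{J^{*}}\circ\theta^{\cdot}(q^{*})$ strictly decreases along an orbit confined to the compact set $\Omega$, and its limit must be $0$ (otherwise an $\omega$-limit point of this orbit would lie in $\Omega\cap\dot W^u(x)$ with $\chi_{J^{*}}>0$ yet $\frac{d}{dt}\rho_{J^{*}}\circ\theta^{\cdot}=-\chi_{J^{*}}\rho_{J^{*}}=0$). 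Then $\theta^{\cdot}(q^{*})$ --- a $C^{1}$ curve tangent to $\mathscr L^s_{J^{*}}$, hence in one leaf --- converges to a point of $\overline{W^u(x)}\setminus W^u(x)$ by ($\rho$-1)/Remark~\ref{fund}, while $\rho_{m^{*}}\equiv\ell>0$ along it; evaluating $\rho_{m^{*}}$ along the approach to that frontier point exactly as in the completeness step forces $\ell=0$, a contradiction. Hence $\ell=0$, and since $\theta^{t}(p)$ stays in the single leaf $L=L^s_{m^{*}}(p_{0})$ with $\rho_{m^{*}}(\theta^{t}(p))\to 0$, Remark~\ref{fund} gives $\theta^{t}(p)\to p_{0}\in W^u(y)\subset\overline{W^u(x)}\setminus W^u(x)$; set $\omega_{\theta}(p):=p_{0}$.

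Finally, for continuity of $\omega_{\theta}$, fix $p$ and a neighbourhood $N$ of $\omega_{\theta}(p)=p_{0}$. By ($\rho$-1) choose an open $P\ni p_{0}$ in $W^u(y)$ and $\epsilon\in(0,1]$ with $A:=\rho_{m^{*}}^{-1}((0,\epsilon))\cap\bigcup_{p''\in P}L^s_{m^{*}}(p'')\subset N$ and $\epsilon$ small enough that $A\subset\overline{V_{m^{*}}}\setminus\bigcup_{j>m^{*}}\overline{V_j}$; then on $A$ one has $Y=\sum_{i\ge m^{*}}\chi_iY_i$ tangent to $\mathscr L^s_{m^{*}}$ with $\rho_{m^{*}}$ strictly decreasing, so $A$ is positively $\theta$-invariant and, by the previous step, the forward orbit of every point of $A$ converges to a point of $A\cap W^u(y)=P$. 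Picking $\bar t$ with $\theta^{\bar t}(p)\in A$ and using that $\theta^{\bar t}$ is continuous and $A$ open, we get $\theta^{\bar t}(p')\in A$, hence $\omega_{\theta}(p')\in\overline P$, for $p'$ near $p$; shrinking $N$ gives $\omega_{\theta}(p')\to\omega_{\theta}(p)$. The main obstacle is exactly the simultaneous bookkeeping in the completeness and $\ell=0$ steps: one must keep track, stratum by stratum, of which competing $\rho_i$ decreases, stays constant, or stays above $1$ along the orbit, combining the vanishing relations of Lemma~\ref{locdata}(iii), the support conditions ($\chi$-2)--($\chi$-3), the divergence/non-vanishing properties ($\rho$-1)--($\rho$-2), and the leaf-confinement guaranteed by Proposition~\ref{uniqueness}.
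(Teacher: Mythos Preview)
Your argument tracks the paper's closely: pick the maximal index $m^*$ for which the forward orbit eventually sits in $\overline{V_{m^*}}$, exploit the three bounds on the $\rho_j$'s (exponential lower bound at $j=m^*$, constant for $j<m^*$, $>1$ for $j>m^*$), confine to a single leaf of $\mathscr L^s_{m^*}$ via Proposition~\ref{uniqueness}, and use a positively invariant tube for continuity. Two points need correction.

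\medskip
\noindent\textbf{The step $\ell=0$.} Your nested $\omega$-limit argument is workable but its closing line is wrong as written: when $J^*>m^*$, convergence of $\theta^{\cdot}(q^*)$ to a point $p_1\in W^u(z)$ with $\ind(z)=J^*$ says nothing about $\rho_{m^*}$ through ($\rho$-1), which only governs $\rho_{J^*}$. The correct contradiction is simpler: the very bounds you used for completeness, with $\ell>0$ in place of the exponential estimate, already force $\Omega\subset\dot W^u(x)$ (a fact you invoke but do not establish); then $p_1\in\Omega$ because $\Omega$ is closed, yet $p_1\notin W^u(x)$. The paper avoids this recursion altogether with a one-step dichotomy on any subsequential limit $q$ of $\gamma$: if $q\notin\bigcup_{j>m^*}\overline{V_j}$ then $\chi_{m^*}(q)>0$ pushes $\rho_{m^*}$ strictly below $r$ near $q$, contradicting $r=\inf\rho_{m^*}\circ\gamma$; otherwise, for the maximal $j>m^*$ with $q\in\overline{V_j}$ one has $\chi_j(q)>0$, whence $\theta^{\bar t}(q)\in V_j$ for small $\bar t>0$ and continuity drags $\gamma$ into $V_j$, contradicting the choice of $m^*$.

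\medskip
\noindent\textbf{Continuity.} The extra requirement ``$\epsilon$ small enough that $A\subset\overline{V_{m^*}}\setminus\bigcup_{j>m^*}\overline{V_j}$'' is in general \emph{not achievable}: nothing prevents $p_0$ from lying in $\overline{V_j}$ (closure in $M$) for some $j>m^*$, and then every tube $A$, no matter how small, meets $\overline{V_j}$. (Concretely, this occurs whenever broken $\phi$-orbits from $x$ pass through a stationary point $z$ of index $j$ before reaching the leaf through $p_0$.) Accordingly, the assertion that every forward $\theta$-orbit starting in $A$ converges to a point of $P$ is false in general; and $A\cap W^u(y)=\emptyset$ anyway, since $A\subset W^u(x)$. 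The fix is to drop the extra condition. From $A\subset V_{m^*}$ alone, ($\chi$-2) gives $Y=\sum_{i\ge m^*}\chi_iY_i$ on $A$, which is tangent to $\mathscr L^s_{m^*}$, and Lemma~\ref{olem} makes $\rho_{m^*}$ non-increasing; hence $A$ is positively $\theta$-invariant. Choosing $P$ and $\epsilon$ so that $\overline A\subset N$ (apply ($\rho$-1) to a slightly smaller neighborhood first), one gets $\omega_\theta(p')\in\overline A\subset N$ for every $p'$ near $p$ --- exactly the paper's argument.
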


\begin{proof}
Let 
\[
\gamma: [0,T) \rightarrow \dot{W}^u(x), \qquad \gamma(t) := \theta^t(p),
\]
be the forward orbit of some $p\in \overline{V}\cap W^u(x)$, where $T\in (0,+\infty]$ is the maximal existence time.
Let $i\in \{0,\dots,k-1\}$ be the maximal index such that the forward orbit $\gamma$ meets $\overline{V_i}$: There is a number $t_0\in [0,T)$ such that $\gamma(t_0)\in \overline{V_i}$, and $\gamma(t)\notin \overline{V_j}$ for every $j>i$ and every $t\in [0,T)$. 

By Lemma \ref{olem}
\begin{equation}
\label{scende}
\gamma(t)\in \overline{V_i} \qquad \mbox{and} \qquad \frac{d}{dt} \rho_i \circ \gamma(t) = - \chi_i(\gamma(t)) \rho_i(\gamma(t)) \qquad \forall t\in [t_0,T).
\end{equation}
Since $\chi_i\leq 1$, we have
\[
\frac{d}{dt} \rho_i \circ \gamma \geq - \rho_i\circ \gamma \qquad \mbox{on } [t_0,T),
\]
and hence
\begin{equation}
\label{dalbasso}
\rho_i \circ \gamma(t) \geq \rho_i(\gamma(t_0))e^{t_0-t} \qquad \forall t\in [t_0,T).
\end{equation}
If $j>i$ we have
\begin{equation}
\label{dalbasso2}
\rho_j \circ \gamma (t) > 1 \qquad \forall t\in [0,T),
\end{equation}
because the forward orbit $\gamma$ does not meet $\overline{V_j}$. If $j<i$ we have, using again Lemma \ref{olem} and the fact that $\chi_j$ vanishes on $\overline{V_i}$ by ($\chi$-2),
\begin{equation}
\label{dalbasso3}
\rho_j \circ \gamma(t) = \rho_j (\gamma(t_0)) \in (0,+\infty] \quad \forall t\in [t_0,T).
\end{equation}
By (\ref{dalbasso2}) and (\ref{dalbasso3}), $\rho_j\circ \gamma$ is bounded away from zero on $[t_0,T)$ for every $j\neq i$. If, by contradiction, $T$ is finite then (\ref{dalbasso}) implies that also $\rho_i\circ \gamma$ is bounded away from zero on $[t_0,T)$. But then $\gamma([t_0,T))$ is contained in a compact subset of $\dot{W}^u(x)$ and this contradicts the fact that the maximal existence time $T$ is finite. We conclude that $T=+\infty$.

We now claim that the positive function $\rho_i\circ \gamma$, which by (\ref{scende}) is monotonically decreasing on $[t_0,+\infty)$, converges to zero for $t\rightarrow +\infty$. If this is not the case, then this function has a positive limit 
\[
r:= \inf_{[t_0,+\infty)} \rho_i\circ \gamma
\]
for $t\rightarrow +\infty$. Using again (\ref{dalbasso2}) and (\ref{dalbasso3}), we deduce that $\gamma([t_0,+\infty))$ is contained in a compact subset of $\dot{W}^u(x)$ and we find a sequence $(t_h)\subset [t_0,+\infty)$ with $t_h \rightarrow +\infty$ such that $\gamma(t_h)$ converges to some point $q\in \dot{W}^u(x)$, which belongs to $\overline{V_i}$ and satisfies $\rho_i(q)=r$. If $q$ is not in the set $\bigcup_{j>i} \overline{V_j}$, then Lemma \ref{olem} and condition ($\chi$-3) imply that
\[
\frac{d}{dt} \rho_i \circ \theta^t(q) \Bigr|_{t=0} = - \chi_i(q) \rho_i(q) <0,
\]
and hence for some small $\overline{t}>0$ we have
\[
\rho_i (\theta^{\overline{t}}(q)) < \rho_i(q) = r.
\]
By the continuity of $\theta$ and $\rho_i$, we have
\[
\rho_i ( \gamma(t_h+\overline{t}) ) = \rho_i( \theta^{\overline{t}}(\gamma(t_h)))  \rightarrow \rho_i (\theta^{\overline{t}}(q)) < r,
\]
and hence $\rho_i ( \gamma(t_h+\overline{t}) )<r$ for $h$ large enough, contradicting the assumption that $r$ is the infimum of $\rho_i$ over $\gamma([t_0,+\infty))$. Therefore, $q$ belongs to the set $\bigcup_{j>i} \overline{V_j}$.

Let $j>i$ be the largest index such that $q$ belongs to $\overline{V_j}$. By Lemma \ref{olem} and ($\chi$-3), we have
\[
\frac{d}{dt} \rho_j \circ \theta^t(q) \Bigr|_{t=0} = - \chi_j(q) \rho_j(q) <0.
\]
Therefore, for a small $\overline{t}>0$ we have
\[
\rho_j (\theta^{\overline{t}}(q)) < \rho_j(q) \leq 1,
\]
meaning that $\theta^{\overline{t}}(q)$ belongs to open set $V_j$. Therefore, the sequence
\[
\gamma(t_h+\overline{t}) = \theta^{\overline{t}}(\gamma(t_h)),
\]
which converges to $\theta^{\overline{t}}(q)$, eventually enters $V_j$. This contradicts the initial assumption that the forward orbit $\gamma$ does not meet any $\overline{V_j}$ with $j>i$.

We now know that $\gamma$ is defined on the whole interval $[0,+\infty)$, that $\gamma(t)$ belongs to $\overline{V_i}$ for all $t\geq t_0$ and that $\rho_i\circ \gamma(t)$ tends to zero for $t\rightarrow +\infty$. Using again property ($\chi$-2), we obtain the identity
\[
Y(\gamma(t)) = \sum_{j\geq i} \chi_j(\gamma(t)) Y_j(\gamma(t)) \qquad \forall t\geq t_0.
\]
By property (i) in Lemma \ref{locdata}, $Y_j$ is tangent to the foliation $\mathscr{L}^s_j$ for every $j$. Since the foliations $\mathscr{L}^s_j$ with $j\geq i$ refine the foliation $\mathscr{L}^s_i$, the above formula shows that the vectors $Y(\gamma(t))$ are tangent to the foliation $\mathscr{L}^s_i$ for every $t\geq t_0$. By Proposition \ref{uniqueness}, $\gamma([t_0,+\infty))$ is contained in a single leaf $L^s_i(p_{\infty})$, for some $p_{\infty}\in W^u(y)$, where $y$ is a stationary point of $\phi$ with $\ind(y)=i$. Thanks to ($\rho$-1), the fact that $\rho_i \circ \gamma(t)$ tends to $0$ for $t\rightarrow +\infty$ implies that $\gamma(t)$ converges to $p_{\infty}$ for $t\rightarrow +\infty$.

This proves that the forward orbit of each $p\in \overline{V}\cap W^u(x)$ converges to some point $\omega_{\theta}(p)$ in $\overline{W^u(x)} \setminus W^u(x)$. Now we wish to prove that the map
\[
\omega_{\theta} : \overline{V}\cap W^u(x) \rightarrow \overline{W^u(x)} \setminus W^u(x)
\]
is continuous. Let $p\in \overline{V}\cap W^u(x)$ and let $N$ be a neighborhood of $\omega_{\theta}(p)$ in $M$. By our previous considerations, $p_{\infty}:=\omega_{\theta}(u)$ is a point in the unstable manifold $W^u(y)$ of some stationary point $y$ of index $i$, $0\leq i \leq k-1$, and there exists $t_0$ such that 
\[
\theta^t(p) \in L^s_i(p_{\infty}) \qquad \forall t\geq t_0, \qquad \lim_{t\rightarrow +\infty} \rho_i(\theta^t(p)) = 0.
\]
Here $i$ is the maximal index such that the forward orbit of $p$ meets $\overline{V_i}$.
By ($\rho$-1) we can find an open neighborhood $P$ of $p_{\infty}$ in $W^u(y)$ and a positive number $\epsilon\in (0,1)$ such that 
\[
N_0:= \rho_i^{-1}((0,\epsilon))  \cap \bigcup_{p'_{\infty}\in P} L^s_i(p'_{\infty}), 
\]
which is an open subset of $W^u(x)$, satisfies
\[
\overline{N_0} \subset N.
\]
We claim that $N_0$ is positively invariant with respect to $\theta$. The set $N_0$ is contained in $V_i$ and, by ($\chi$-2), the vector field $Y$ has the form
\[
Y = \sum_{j\geq i} \chi_j Y_j
\]
on it. Then the claim follows from the fact that on $N_0$ all the vector fields $Y_j$ with $j\geq i$ are tangent to the foliation $\mathscr{L}^s_i$ from Lemma \ref{olem}.

Since $\theta^t(p)\rightarrow p_{\infty}$ for $t\rightarrow +\infty$, there exists $t_1\geq t_0$ such that $\theta^{t_1}(p)\in N_0$. By the continuity of $\theta^{t_1}$, there exists a neighborhood $U$ of $p$ in $\overline{V}\cap W^u(x)$ such that
\[
\theta^{t_1}(U) \subset N_0.
\]
By the positive invariance of $N_0$, $\theta^t(p')$ belongs to $N_0$ for every $p'\in U$ and every $t\geq t_1$. It follows that 
\[
\omega_{\theta}(p')\in \overline{N_0} \subset N
\]
for every $p'\in U$. This proves that the map $\omega_{\theta}$ is continuous at $p$.
\end{proof}
 
\begin{prop}
\label{strictpos}
The set $V$ is strictly positively invariant with respect to both $\phi$ and $\theta$, meaning that for every $p\in \overline{V}\cap W^u(x)$ and every $t>0$ we have:
\[
\phi^t(p)\in V \qquad \mbox{and } \qquad \theta^t(p)\in V.
\]
\end{prop}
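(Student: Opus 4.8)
The plan is to reduce both statements to tracking a single function $\rho_i$ along the relevant flow, choosing $i$ as large as possible.

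First I would observe that $\overline V\cap W^u(x)$ is contained in $\dot W^u(x)$, i.e. $x\notin\overline V$. Indeed, a sequence in $V$ converging to $x$ would, after passing to a subsequence, lie in a single $V_i=\{p\in U_i\mid\rho_i(p)<1\}$; but then $x$ would be a limit of points of $U_i$ in $W^u(x)$ while $x\notin U_i$, so $x$ lies in the relative boundary of $U_i$ in $W^u(x)$ and ($\rho$-2) would force $\rho_i$ to diverge along the sequence, contradicting $\rho_i<1$. Since $V$ is a finite union, $\overline V\cap W^u(x)=\bigcup_i\bigl(\overline{V_i}\cap\dot W^u(x)\bigr)$. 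Now fix $p\in\overline V\cap W^u(x)$ and let $i\in\{0,\dots,k-1\}$ be the largest index with $p\in\overline{V_i}$. By maximality $p\in\bigl(\overline{V_i}\setminus\bigcup_{j>i}\overline{V_j}\bigr)\cap\dot W^u(x)$, so ($\chi$-3) yields $\chi_i(p)>0$; and $p\in\overline{V_i}\cap\dot W^u(x)$ gives, by (\ref{Vjei}), that $p\in U_i$ and $\rho_i(p)\in(0,1]$.

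Then I would treat the two flows separately. For $\phi$: the set $U_i$ is $\phi$-invariant, so $\phi^t(p)\in U_i$ for all $t$, and ($\rho$-6) read along the orbit becomes the linear ODE $\frac{d}{dt}\rho_i(\phi^t(p))=-\rho_i(\phi^t(p))$, whence $\rho_i(\phi^t(p))=e^{-t}\rho_i(p)<1$, and therefore $\phi^t(p)\in V_i\subset V$ for every $t>0$. For $\theta$: by Proposition \ref{converge} the forward orbit $\gamma(t):=\theta^t(p)$ is defined for all $t\geq0$, and by Lemma \ref{olem} the set $\overline{V_i}$ is positively $\theta$-invariant, so $\gamma(t)\in\overline{V_i}\cap\dot W^u(x)\subset U_i$ for all $t\geq0$. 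Applying Lemma \ref{olem} along the orbit gives $\frac{d}{dt}\rho_i(\gamma(t))=-\chi_i(\gamma(t))\,\rho_i(\gamma(t))$, with continuous right-hand side, so $\rho_i\circ\gamma$ is continuously differentiable and
\[
\rho_i(\gamma(t))-\rho_i(p)=-\int_0^t\chi_i(\gamma(s))\,\rho_i(\gamma(s))\,ds .
\]
The integrand is nonnegative and, by continuity of $\chi_i\circ\gamma$ together with $\chi_i(p)>0$, strictly positive near $s=0$, so the integral is positive for every $t>0$; hence $\rho_i(\gamma(t))<\rho_i(p)\leq1$, i.e. $\gamma(t)\in V_i\subset V$ for every $t>0$. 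This proves the proposition.

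The one point that requires care is the choice of $i$. It must be large enough that $\chi_i(p)>0$ — so that $\rho_i$ strictly decreases along $\theta$ immediately, rather than merely being non-increasing, which would only yield the positive invariance of $\overline V$ and not of $V$ — and it must also be an index for which $\overline{V_i}$ is positively invariant, so that the orbit never leaves the region governed by $\rho_i$ and no index-switching is needed. Taking $i$ maximal with $p\in\overline{V_i}$ secures both conditions simultaneously, via ($\chi$-3) and Lemma \ref{olem} respectively; beyond that, everything is a direct application of ($\rho$-2), ($\rho$-6), ($\chi$-3), and Lemma \ref{olem}.
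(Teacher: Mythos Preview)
Your proof is correct and follows essentially the same approach as the paper: both arguments track $\rho_i$ along the orbit using ($\rho$-6) for $\phi$ and Lemma~\ref{olem} for $\theta$, choosing $i$ maximal so that $(\chi$-3$)$ gives $\chi_i(p)>0$. Your version is somewhat more direct---you treat all $p\in\overline V\cap W^u(x)$ and all $t>0$ at once via the global ODE and integral, rather than reducing to boundary points and small times---and you make explicit the fact that $x\notin\overline V$, which the paper leaves implicit.
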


\begin{proof}
By the continuity of the orbits, it is enough to show that for every $p$ in the relative boundary of $V$ in $W^u(x)$ there is a positive number $\epsilon$ such that
\[
\phi^t(p) \in V \qquad \mbox{and} \qquad \theta^t(p) \in V \qquad \forall t\in (0,\epsilon).
\]
Let $p$ be in the relative boundary of $V$ in $W^u(x)$. Then $\rho_i(p)=1$ for some $i\in \{0,\dots,k-1\}$. By ($\rho$-6) we have
\[
\frac{d}{dt} \rho_i \circ \phi^t (p) \Bigr|_{t=0} = - \rho_i(p) < 0.
\]
This implies that there is a positive number $\epsilon$ such that $\rho_i(\phi^t(p))<1$ for every $t\in (0,\epsilon)$, and hence $\phi^t(p)\in V_i \subset V$ for every $t\in (0,\epsilon)$.

Now let $i$ be the largest index in $\{0,\dots,k-1\}$ such that $\rho_i(p)=1$. By Lemma \ref{olem} we have
\[
\frac{d}{dt} \rho_i \circ \phi_Y^t (p) \Bigr|_{t=0} = - \chi_i(p) \rho_i (p).
\]
Being on the relative boundary of $V$ and not on the relative boundary of $V_j$ for any $j>i$, the point $p$ does not belong to $\overline{V_j}$ for any $j>i$. Then ($\chi$-3) implies that $\chi_i(p)>0$ and hence the above derivative is strictly negative. We conclude   that there is a positive number $\epsilon$ such that $\rho_i(\theta^t(p))<1$ for every $t\in (0,\epsilon)$, and hence $\theta^t(p)\in V_i \subset V$ for every $t\in (0,\epsilon)$.
\end{proof}

\begin{rem}
\label{estensione}
The above two propositions and the continuity of the local flow $\theta$ imply that $\overline{V}\cap W^u(x)$ has an open neighborhood $V'$ in $\dot{W}^u(x)$ such that the forward orbit of every $p\in V'$ is defined for every $t\in [0,+\infty)$. Indeed, by Proposition \ref{strictpos} the image of the boundary $\partial V$ of $V$ in $W^u(x)$ under $\theta^1$ is a compact set that is contained in the open set $V$. By continuity,  there is an open neighborhood $N$ of $\partial V$ in $W^u(x)$ such that $\theta^1(N)$ is contained in $V$ and our claim holds for $V'=V\cap N$ thanks to the first statement in Proposition \ref{converge}.
\end{rem}

\begin{prop}
\label{negcompl}
The complement $V^c$ of $V$ in $\dot{W}^u(x)$ is negatively invariant with respect to $\theta$. Morover, for every $p\in V$ there exists $t<0$ such that $\theta^t(p)\in V^c$.
\end{prop}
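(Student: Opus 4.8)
The first assertion follows immediately from Proposition \ref{strictpos}, which asserts that $V$ is (strictly) positively invariant under $\theta$. Indeed, let $p\in V^c$ and let $t<0$ be such that $\theta^t(p)$ is defined; if $\theta^t(p)$ were in $V$, then applying Proposition \ref{strictpos} to the point $\theta^t(p)\in V\subset\overline V\cap W^u(x)$ and to the positive time $-t$ would give $p=\theta^{-t}(\theta^t(p))\in V$, a contradiction. Hence $\theta^t(p)\in V^c$, which is the negative invariance of $V^c$.

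For the second assertion the plan is to argue by contradiction: suppose there is $p\in V$ whose backward orbit $\gamma(t):=\theta^t(p)$, defined on the maximal interval $(T_-,0]$, remains in $V$. The first step is a reduction to a single index. By Lemma \ref{olem} the set $\overline{V_i}\cap W^u(x)=\{\rho_i\le1\}\cap U_i$ is positively invariant under $\theta$ and $\rho_i$ is non-increasing along forward orbits inside it, so $V_i=\{\rho_i<1\}\cap U_i$ is positively invariant and $V_i^c$ is negatively invariant. Hence each set $\{t\in(T_-,0]:\gamma(t)\in V_i\}$ is upward closed in $(T_-,0]$; these sets cover $(T_-,0]$ and are finitely many, so (a finite union of proper upward-closed subintervals cannot exhaust $(T_-,0]$) one of them, say for the \emph{largest} index $\ell$ for which this happens, coincides with all of $(T_-,0]$. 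Thus $\gamma(t)\in V_\ell$ for every $t\in(T_-,0]$, while $\gamma(t)\notin V_j$ for every $j>\ell$.

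Since $\gamma$ runs in $V_\ell\subset\overline{V_\ell}$, by ($\chi$-2) we have $\chi_i\circ\gamma\equiv0$ for $i<\ell$, so $Y\circ\gamma=\sum_{i\ge\ell}\chi_i(\gamma)\,Y_i(\gamma)$, a sum of vectors tangent to $\mathscr{L}^s_\ell$ (each $Y_i$ with $i\ge\ell$ is tangent to $\mathscr{L}^s_i$, which for $i>\ell$ refines $\mathscr{L}^s_\ell$); hence $\gamma$ is tangent to $\mathscr{L}^s_\ell$ and, by Proposition \ref{uniqueness}, lies in a single leaf of $\mathscr{L}^s_\ell$, on which $\rho_\ell$ is smooth. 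Combining this with Lemma \ref{olem}, $\frac{d}{dt}\log\rho_\ell(\gamma(t))=-\chi_\ell(\gamma(t))\le0$; in particular $\rho_\ell\circ\gamma$ is non-increasing, hence bounded below by $\rho_\ell(p)>0$ and above by $1$, and $r_*:=\lim_{t\to T_-^+}\rho_\ell(\gamma(t))\in(0,1]$ exists. The heart of the proof is the claim that $\gamma((T_-,0])$ is relatively compact in $\dot{W}^u(x)$. A sequence $\gamma(t_h)$ can leave every compact subset of $\dot{W}^u(x)$ only by approaching $x$ or by approaching some $W^u(z)$ with $\ind(z)<k$; approaching $x$ is excluded by ($\rho$-2) together with $\rho_\ell\circ\gamma\le r_*<\infty$; approaching $W^u(z)$ with $\ind(z)=\ell$ is excluded by ($\rho$-1) together with $\rho_\ell\circ\gamma\ge\rho_\ell(p)>0$; approaching $W^u(z)$ with $\ind(z)=m>\ell$ is excluded by ($\rho$-1), which forces $\rho_m(\gamma(t_h))\to0$, together with $\rho_m\circ\gamma\ge1$ (valid because $\gamma$ never meets $V_m$); and for $m<\ell$ the leaf of $\mathscr{L}^s_\ell$ carrying $\gamma$ is, since $\mathscr{L}^s_\ell$ refines $\mathscr{L}^s_m$, either disjoint from $U_m$—so $\gamma$ stays away from $W^u(z)\subset U_m$—or contained in a single leaf of $\mathscr{L}^s_m$, in which case $\rho_m\circ\gamma$ is a positive constant (since $d\rho_m[Y_i]=0$ on $U_m\cap U_i$ for $i\ge\ell>m$ by Lemma \ref{locdata}) and ($\rho$-1) applies again. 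Relative compactness forces $T_-=-\infty$, and integrating $\frac{d}{dt}\log\rho_\ell(\gamma(t))=-\chi_\ell(\gamma(t))$ gives $\int_{-\infty}^0\chi_\ell(\gamma(t))\,dt\le\log(1/\rho_\ell(p))<\infty$; hence $\chi_\ell(\gamma(t_h))\to0$ along some sequence $t_h\to-\infty$, and by relative compactness we may assume $\gamma(t_h)\to q_\infty\in\dot{W}^u(x)$ with $\chi_\ell(q_\infty)=0$. By ($\chi$-3) this forces $q_\infty\in\overline{V_j}$ for some $j>\ell$.

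To close the argument, observe that $q_\infty$ lies in the $\alpha$-limit set of $p$, which is non-empty, compact, $\theta$-invariant and contained in $\overline{V_\ell}$. Choose $j$ maximal with $q_\infty\in\overline{V_j}$; then $j>\ell$ and, by ($\chi$-3), $\chi_j(q_\infty)>0$, so Lemma \ref{olem} gives $\frac{d}{dt}\rho_j(\theta^t(q_\infty))\big|_{t=0}=-\chi_j(q_\infty)\rho_j(q_\infty)<0$. Hence $\theta^{\bar t}(q_\infty)\in V_j$ for a small $\bar t>0$, and this point still belongs to the $\alpha$-limit set of $p$; thus there are times $s_m\to-\infty$ with $\gamma(s_m)\to\theta^{\bar t}(q_\infty)$, so $\gamma(s_m)\in V_j$ for $m$ large, and by upward-closedness $\gamma(t)\in V_j$ for all $t\in(T_-,0]$, contradicting the maximality of $\ell$. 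This contradiction shows that the backward orbit of $p$ must leave $V$, i.e.\ $\theta^t(p)\in V^c$ for some $t<0$. The main obstacle is the relative-compactness claim together with its case analysis; once it is in place, the rest is a formal manipulation of the monotonicity of $\rho_\ell$, the support conditions ($\chi$-2)–($\chi$-3) and the maximality of $\ell$, entirely in the spirit of the argument already used in the proof of Proposition \ref{converge}.
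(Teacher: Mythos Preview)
Your proof is correct and follows essentially the same strategy as the paper's: argue by contradiction, pick a maximal index $\ell$ (the paper's $i$) for which the backward orbit stays in $V_\ell$, use Lemma \ref{olem} together with ($\chi$-2) and Lemma \ref{locdata} to bound all the $\rho_j$'s away from zero, deduce relative compactness and hence $T_-=-\infty$, and then get a contradiction from the existence of a compact $\theta$-invariant $\alpha$-limit set in $\overline{V_\ell}$.

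A few remarks. First, a small imprecision: after choosing $\ell$ maximal with $\{t:\gamma(t)\in V_\ell\}=(T_-,0]$, it does \emph{not} follow that $\gamma(t)\notin V_j$ for every $j>\ell$ and every $t$; the set $\{t:\gamma(t)\in V_j\}$ is merely a proper upward-closed subinterval, so $\gamma$ may well lie in $V_j$ for $t$ near $0$. This is harmless, since all your uses of ``$\rho_m\circ\gamma\ge1$ for $m>\ell$'' are only needed along sequences $t_h\to T_-$, where the claim does hold. Second, the detour through Proposition \ref{uniqueness} to place $\gamma$ in a single leaf of $\mathscr{L}^s_\ell$ is not needed here: for $m<\ell$ one gets $\rho_m\circ\gamma$ constant directly from $d\rho_m[Y_i]=0$ for $i\ge\ell>m$ (Lemma \ref{locdata}~(iii)) together with $\chi_m\equiv0$ on $\overline{V_\ell}$, which you also mention. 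Third, your final contradiction (finding $q_\infty$ with $\chi_\ell(q_\infty)=0$, hence $q_\infty\in\overline{V_j}$ for some $j>\ell$, and pushing it into $V_j$) is a correct hands-on argument, but the paper short-circuits this step: once the $\alpha$-limit set is a nonempty compact $\theta$-invariant subset of $\overline{V_\ell}$, Proposition \ref{converge} already forbids it, since every forward $\theta$-orbit in $\overline{V_\ell}$ leaves $\dot W^u(x)$ in the limit.
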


\begin{proof}
The negative invariance of $V^c$ follows from the positive invariance of $V$, which follows from Proposition \ref{strictpos}. In order to prove the second statement, we argue by contradiction and assume that there exists a point $p\in V$ whose backward orbit
\[
\gamma: (T,0] \rightarrow \dot{W}^u(x), \qquad \gamma(t):= \theta^t(p),
\]
is contained in $V$. Here $T\in [-\infty,0)$ is the infimum of the maximal interval of existence. Since $V$ is the finite union of the sets $V_i$, there exists $i\in \{0,\dots,k-1\}$ and a sequence $(t_h)\subset (T,0]$ with $t_h\rightarrow T$ such that $\gamma(t_h)\in V_i$ for every $h$. We may also assume that $i$ is maximal with this property, meaning that there exists $T_0\in (T,0]$ such that
\begin{equation}
\label{neg1}
\rho_j(\gamma(t)) \geq 1 \qquad \forall j>i \quad \forall t\in (T,T_0].
\end{equation}
By Lemma \ref{olem} and since $(t_h)$ tends to $T$, we have
\begin{equation}
\label{neg2}
\gamma(t) \in V_i \qquad \forall t\in (T,0],
\end{equation}
and
\[
\frac{d}{dt} \rho_i \circ \gamma (t) \leq 0 \qquad \forall t\in (T,0],
\]
from which we deduce
\begin{equation}
\label{neg3}
\rho_i(\gamma(t)) \geq \rho_i(\gamma(0)) \qquad \forall t\in (T,0].
\end{equation}
If $j<i$, the fact that $\chi_j$ vanishes on $V_i$ and Lemma \ref{olem} imply
\begin{equation}
\label{neg4}
\rho_j(\gamma(t)) = \rho_j(\gamma(0))\in (0,+\infty] \qquad \forall t\in (T,0].
\end{equation}
From (\ref{neg1}), (\ref{neg2}),  (\ref{neg3})  and (\ref{neg4}) we deduce that $\gamma((T,0])$ is contained in a compact subset of $\dot{W}^u(x)$. Therefore, $T=-\infty$. Using again (\ref{neg1}), (\ref{neg2}),  (\ref{neg3})  and (\ref{neg4}), we obtain that the $\alpha$-limit of $\gamma$ is a compact invariant set of $\theta$ which is contained in $\overline{V_i}$  and on which all the functions $\rho_j$ have a positive infimum. But Proposition \ref{converge} tells us that $\theta$ does not have an invariant set with these properties, as the forward orbits of points in $\overline{V_i}$ converge to points in $\overline{W^u(x)} \setminus W^u(x)$. This contradiction concludes the proof.
\end{proof}

\section{Proof of the main theorem}
\label{finalsec}

We are finally ready to prove the main result of this paper. As in the previous section, $x$ denotes a  stationary point of $\phi$ of Morse index $k$. 

\begin{thm}
\label{finale}
There exists a continuous flow $\psi$ on $W^u(x)$ with the following properties:
\begin{enumerate}[(i)]
\item The only stationary point of $\psi$ is $x$, and $x$ has a neighborhood $U\subset W^u(x)$ such that $\psi^t(p)=\phi^t(p)$ for every $p\in U$ and $t\leq 0$.
\item The $\alpha$-limit of every $p\in W^u(x)$ is the stationary point $x$: 
\[
\lim_{t\rightarrow -\infty} \psi^t(p) = x \qquad \forall p\in W^u(x).
\]
\item For every $p\in W^u(x)\setminus \{x\}$ we have
\[
\lim_{t\rightarrow +\infty} \psi^t(p) = \omega_{\psi}(p),
\]
where $\omega_{\psi}(p)$ is some point in $\overline{W^u(x)}\setminus W^u(x)$.
\item The map $p\mapsto \omega_{\psi}(p)$ is continuous on $W^u(x)\setminus \{x\}$.
\end{enumerate}
\end{thm}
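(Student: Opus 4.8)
The plan is to build $\psi$ by \emph{juxtaposing} two flows on $W^u(x)$: the restriction of $\phi$, which already governs the dynamics correctly near $x$, and the local flow $\theta$ constructed in Section~\ref{nearbdrysec}, which governs it correctly near $\overline{W^u(x)}\setminus W^u(x)$. The two will be glued along the relative boundary $\partial V$ of the open set $V=\bigcup_{i=0}^{k-1}V_i$ in $W^u(x)$, and the gluing itself will be performed by the general construction of Appendix~\ref{juxtsec}. First I would collect the ingredients. Since there are finitely many functions $\rho_i$ and each of them blows up at the relative boundary of $U_i$ in $W^u(x)$ by ($\rho$-2), in particular at $x$, the point $x$ has a neighbourhood disjoint from $\overline V$; hence $U:=W^u(x)\setminus\overline V$ is an open neighbourhood of $x$, while $W^u(x)\setminus V$ is compact. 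Writing $\rho:=\min_i\rho_i$ (with the convention $\rho_i=+\infty$ off $U_i$), which is continuous on $W^u(x)\setminus\{x\}$ and tends to $+\infty$ at $x$, we have $V=\{\rho<1\}$ and $\partial V=\{\rho=1\}$. Because each $U_i$ is $\phi$-invariant and $\tfrac{d}{dt}\big(\rho_i\circ\phi^t\big)=-\rho_i\circ\phi^t$ by ($\rho$-6), we obtain the explicit formula $\rho_i\circ\phi^t=e^{-t}\rho_i$ on $U_i$, hence $\rho\circ\phi^t=e^{-t}\rho$; consequently every $\phi$-orbit other than $\{x\}$ lies in $W^u(x)\setminus V$ exactly up to the time $\log\rho(p)$, then crosses $\partial V$ and stays in $V$ forever by the strict positive invariance of $V$ under $\phi$ (Proposition~\ref{strictpos}), the crossing time $\log\rho(\cdot)$ depending continuously on the initial point.

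Next I would assemble the data required by the juxtaposition construction. By Proposition~\ref{strictpos} the set $V$ is strictly positively invariant under both $\phi$ and $\theta$, so both flows cross $\partial V$ in the same direction, from $W^u(x)\setminus V$ into $V$. Consequently a forward $\theta$-orbit issued from any point of $\overline V\cap W^u(x)$ stays in $V$, is defined for all positive times, and converges to a point of $\overline{W^u(x)}\setminus W^u(x)$, the resulting map $\omega_\theta$ being continuous (Proposition~\ref{converge}). Dually, by Proposition~\ref{negcompl} the backward $\theta$-orbit of any point of $V$ is defined until it first meets $\partial V$ and thereafter stays in the complement of $V$. This is exactly the input of Appendix~\ref{juxtsec}: a complete flow $\phi$ on $W^u(x)$ and a local flow $\theta$ that is forward-complete on the relevant region, both transverse to the common separating hypersurface $\partial V$ and crossing it coherently, with continuously varying crossing data. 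Its output is a continuous flow $\psi$ on $W^u(x)$ whose orbits follow $\phi$ while in $W^u(x)\setminus V$ and $\theta$ while in $\overline V$, spliced at $\partial V$: for $p\notin V$ one has $\psi^t(p)=\phi^t(p)$ when $t\le\log\rho(p)$ and $\psi^t(p)=\theta^{\,t-\log\rho(p)}\!\big(\phi^{\log\rho(p)}(p)\big)$ when $t\ge\log\rho(p)$, with the analogous description (using the first backward $\theta$-exit from $V$) for $p\in V$, and $\psi^t(x)=x$. Completeness of $\psi$ follows from the completeness of $\phi$, the forward-completeness of $\theta$ on $\overline V$ (Proposition~\ref{converge}), and Proposition~\ref{negcompl}.

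Granting $\psi$, the four stated properties follow quickly. The flow $\psi$ has no stationary point other than $x$: a fixed point in $W^u(x)\setminus\overline V$ would be a $\phi$-fixed point, hence equal to $x$, while a fixed point in $\overline V$ would be a $\theta$-fixed point (its forward $\psi$-orbit being a forward $\theta$-orbit), which is impossible since forward $\theta$-orbits from $\overline V$ converge to points lying outside $W^u(x)$. On the neighbourhood $U=W^u(x)\setminus\overline V$ we have $\rho>1$, so there $\psi^t=\phi^t$ for all $t\le 0$, which gives~(i). For~(ii), every backward $\psi$-orbit eventually coincides with a backward $\phi$-orbit, and $\phi^t(p)\to x$ as $t\to-\infty$ because $W^u(x)$ is by definition the unstable manifold of $x$. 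For~(iii) and~(iv), every forward $\psi$-orbit eventually coincides with a forward $\theta$-orbit issued from the point $q(p)\in\partial V$ at which it meets $\partial V$, hence $\psi^t(p)\to\omega_\psi(p):=\omega_\theta(q(p))\in\overline{W^u(x)}\setminus W^u(x)$; the map $p\mapsto q(p)$ is continuous (it equals $\phi^{\log\rho(p)}(p)$ for $p\notin V$ and the first backward $\theta$-exit point for $p\in V$, the latter being continuous by transversality and Remark~\ref{estensione}), so $\omega_\psi=\omega_\theta\circ q$ is continuous by Proposition~\ref{converge}.

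The delicate point is the gluing itself: one must check that $\phi$ and $\theta$ really are transverse to the possibly non-smooth hypersurface $\partial V$ and that the crossing point and crossing time vary continuously, so that the spliced orbits assemble into a genuine continuous flow. For $\phi$ this is immediate from $\rho\circ\phi^t=e^{-t}\rho$. For $\theta$, at a point $q\in\partial V$ one argues as in the proof of Proposition~\ref{strictpos}: taking the largest index $i$ with $q\in\overline{V_i}$, one has $\rho_i(q)=1$ and $\chi_i(q)>0$ by ($\chi$-3), hence $\tfrac{d}{dt}\big(\rho_i\circ\theta^t(q)\big)\big|_{t=0}=-\chi_i(q)\rho_i(q)<0$ by Lemma~\ref{olem}; together with the $\theta$-negative-invariance of $W^u(x)\setminus V$ (Proposition~\ref{negcompl}) this shows that $\rho$ drops below $1$ along $\theta$ in forward time and stays $\ge 1$ in backward time, so $\theta$ crosses $\partial V$ transversally and coherently with $\phi$. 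With this transversality secured, the verification that $\psi$ is a well-defined continuous flow becomes a routine application of the juxtaposition lemma of Appendix~\ref{juxtsec}.
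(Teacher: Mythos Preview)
Your proof is correct and follows essentially the same route as the paper: juxtapose $\phi$ and the local flow $\theta$ across $\partial V$ via the construction of Appendix~\ref{juxtsec}, after recording strict positive invariance of $V$ (Proposition~\ref{strictpos}), forward convergence of $\theta$ with continuous $\omega_\theta$ (Proposition~\ref{converge}), and backward escape of $\theta$ from $V$ (Proposition~\ref{negcompl}). The one pleasant addition is your explicit formula $\tau(p)=\log\rho(p)$ for the $\phi$-entry time, coming from $\rho\circ\phi^t=e^{-t}\rho$; the paper simply quotes the abstract continuity of the entry-time functions $\tau$ and $\sigma$ and writes $\omega_\psi(p)=\omega_\theta\bigl(\phi^{\tau(p)^+}(p)\bigr)$.
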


As immediate corollary we deduce the following result, in which $B^k$ denotes the open unit ball in $\R^k$.

\begin{cor}
\label{daldisco}
If $x$ is a stationary point of index $k$ of the Morse-Smale gradient-like smooth flow $\phi$, then there exists a continuous map
\[
\varphi: \overline{B^k} \rightarrow \overline{W^u(x)}
\]
whose restriction to the interior maps $B^k$ homeomorphically onto $W^u(x)$. 
\end{cor}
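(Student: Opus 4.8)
The plan is to transport $\overline{B^k}$ to $\overline{W^u(x)}$ along the orbits of the flow $\psi$ produced by Theorem \ref{finale}, using the ``radial'' coordinate to record elapsed time. Write $\dot{W}^u(x):=W^u(x)\setminus\{x\}$. First I would record two structural facts. Since $\psi$ has no rest point in $\dot{W}^u(x)$ and no periodic orbit there (a periodic orbit would be its own $\alpha$-limit, contradicting Theorem \ref{finale}(ii)), every $\psi$-orbit in $\dot{W}^u(x)$ is an injective image of $\R$ running from $x$ as $t\to-\infty$ to $\omega_\psi(p)\in\overline{W^u(x)}\setminus W^u(x)$ as $t\to+\infty$. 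Second, from Theorem \ref{finale}(i) and continuity of $\psi$ at $(0,x)$ one gets a neighbourhood $U'$ of $x$ and $\varepsilon_0>0$ with $\psi([-\varepsilon_0,\varepsilon_0]\times U')\subset U$, and then $\psi^t=\phi^t$ on $U'$ for all $|t|\le\varepsilon_0$ (for $0\le t\le\varepsilon_0$ use $\psi^{-t}(\psi^t(p))=p$ and $\psi^t(p)\in U$). Fix $\delta\in(0,1]$ with $D^u_x(\delta)\subset U'$ and set $S:=\partial D^u_x(\delta)$, a $(k-1)$-sphere.

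\textbf{$S$ is a global section of $\psi|_{\dot{W}^u(x)}$.} Because $D^u_x(\delta)=W^u_{\mathrm{loc},\delta}(x)$, the backward $\phi$-orbit of any point of $D^u_x(\delta)$ stays in $D^u_x(\delta)$; by the above the same holds for backward $\psi$-orbits. Combining this with the transversality of $\phi$ to $\partial D^u_x(\delta)$ and the fact that $\phi$-orbits meet it at most once, one checks that for every $q\in S$ one has $\psi^t(q)\in\mathrm{int}\,D^u_x(\delta)$ for $t<0$ and $\psi^t(q)\notin D^u_x(\delta)$ for $t>0$. Since an arbitrary $p\in\dot{W}^u(x)$ lies in $D^u_x(\delta)$ for $t\ll 0$ and, as $\omega_\psi(p)\notin W^u(x)\supset D^u_x(\delta)$, lies outside $D^u_x(\delta)$ for $t\gg 0$, its orbit meets $S$, and by the previous sentence exactly once. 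Hence $\Phi\colon S\times\R\to\dot{W}^u(x)$, $(q,t)\mapsto\psi^t(q)$, is a continuous bijection; near $S\times\{0\}$ it is a flow-box chart of $\phi$ about the smooth transversal $S$, hence a local homeomorphism there and, by the flow property, everywhere, so $\Phi$ is a homeomorphism.

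\textbf{Definition of $\varphi$.} Fix a homeomorphism $\mathfrak{s}\colon S^{k-1}\to S$ and an increasing homeomorphism $\ell\colon(0,1)\to\R$, and define $\varphi\colon\overline{B^k}\to\overline{W^u(x)}$ by $\varphi(0):=x$, $\varphi(\rho w):=\psi^{\ell(\rho)}(\mathfrak{s}(w))$ for $w\in S^{k-1}$, $\rho\in(0,1)$, and $\varphi(w):=\omega_\psi(\mathfrak{s}(w))$ for $w\in S^{k-1}=\partial B^k$. On $B^k\setminus\{0\}$, $\varphi$ is $\Phi$ composed with homeomorphisms, hence a homeomorphism onto $\dot{W}^u(x)$. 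The family $\phi^{-t}(D^u_x(\delta))$ is decreasing with intersection $\{x\}$, so $\psi^{-t}(q)\to x$ uniformly in $q\in S$, i.e. $\varphi(\rho w)\to x$ uniformly in $w$ as $\rho\to 0$; thus $\varphi|_{B^k}$ is a continuous bijection onto $W^u(x)$, and by invariance of domain a homeomorphism onto $W^u(x)$, which already gives the second assertion of the corollary. By Theorem \ref{finale}(iii) the values of $\varphi$ on $\partial B^k$ lie in $\overline{W^u(x)}\setminus W^u(x)$, and $\varphi|_{\partial B^k}=\omega_\psi\circ\mathfrak{s}$ is continuous by Theorem \ref{finale}(iv).

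\textbf{Continuity at $\partial B^k$ — the main obstacle.} It remains to check continuity of $\varphi$ at each $v\in\partial B^k$; the only nontrivial case is $v_h\to v$ with $v_h\in B^k\setminus\{0\}$. Writing $v_h=\rho_h w_h$ we have $\rho_h\to 1$, hence $t_h:=\ell(\rho_h)\to+\infty$, and $q_h:=\mathfrak{s}(w_h)\to q:=\mathfrak{s}(v)$, so the claim reduces to $\psi^{t_h}(q_h)\to\omega_\psi(q)$. This is a \emph{uniform} version of the convergence in Theorem \ref{finale}(iii), and it is the one point that is not formal: it does not follow from continuity of $\omega_\psi$ alone. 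What makes it work is that each point $\omega_\psi(q)$ has arbitrarily small open neighbourhoods $N$ that are positively invariant under $\psi$: choose $T$ with $\psi^T(q)\in N$; then $\psi^T(q_h)\in N$ for large $h$, and positive invariance together with $t_h>T$ gives $\psi^{t_h}(q_h)\in N$; letting $N$ shrink to $\{\omega_\psi(q)\}$ proves the claim. Such neighbourhoods are precisely the sets $N_0=\rho_i^{-1}((0,\varepsilon))\cap\bigcup_{p'\in P}L^s_i(p')$ constructed in the proof of Proposition \ref{converge} and used in Remark \ref{estensione}; they persist for $\psi$ because near the ``boundary'' of $W^u(x)$ the flow $\psi$ agrees with a reparametrisation of the flow $\theta$ of Section \ref{nearbdrysec}, which is how $\psi$ is assembled. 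Granting this, $\varphi$ is continuous on $\overline{B^k}$, and with the previous steps this is Corollary \ref{daldisco}.
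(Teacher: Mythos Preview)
Your construction of $\varphi$ is exactly the paper's: a small sphere $S=\partial D^u_x(\delta)$ in the neighbourhood $U$ of Theorem~\ref{finale}(i), a homeomorphism $S^{k-1}\to S$, an increasing homeomorphism $(0,1)\to\R$, and $\varphi(\rho w)=\psi^{\ell(\rho)}(\mathfrak{s}(w))$. The paper dispatches the verification with ``readily seen''; you supply the details and correctly isolate the one non-formal step, namely that continuity at $\partial B^k$ is the uniform statement $\psi^{t_h}(q_h)\to\omega_\psi(q)$ for $t_h\to+\infty$, $q_h\to q$, which does \emph{not} follow from (iii) and (iv) of Theorem~\ref{finale} alone, and you resolve it via the positively $\theta$-invariant neighbourhoods $N_0$ from the proof of Proposition~\ref{converge}.

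Two minor points to tighten. First, inside $V$ the juxtaposition gives $\psi^t=\theta^t$ for all $t\ge 0$ (in formula~(\ref{juxtflow}) take $p\in V$, so $\sigma(p)<0$ and hence $t\vee\sigma(p)=t$, $(t-\sigma(p))^-=0$); no reparametrisation is involved, and since $N_0\subset V_i\subset V$ the positive $\theta$-invariance of $N_0$ transfers to $\psi$ verbatim. Second, Remark~\ref{estensione} concerns the domain of $\theta$, not the sets $N_0$; the reference you want is the last part of the proof of Proposition~\ref{converge}.
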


The above statement is precisely Theorem \ref{introthm1} from the Introduction, with the further assumption that $\phi$ is smooth. The case of a flow of class $C^1$ follows from the smooth case by structural stability, as discussed in the Introduction.

\begin{proof}
We fix a smooth $(k-1)$-dimensional sphere $S$ in $U$ that is transversal to the flow, such as for instance $S := \partial D^u_x(r)$ for $r$ small enough, and we choose a homeomorphism
\[
g: \partial B^k \rightarrow S.
\]
Let $\chi: (0,1) \rightarrow \R$ be an orientation preserving homeomorphism, such as for instance
\[
\chi(r) = \tan \frac{2\pi r-\pi}{2} .
\]
Define $\varphi: \overline{B^k} \rightarrow \overline{W^u(x)}$ in polar coordinates by
\[
\varphi( r u) = \left\{ \begin{array}{ll} \psi^{\chi(r)} (g(u)) & \mbox{if } 0<r<1, \\ x & \mbox{if } r=0, \\ \omega_{\psi}(g(u)) & \mbox{if } r=1, \end{array} \right.
\]
where $u\in \partial B^k$. The map $\varphi$ is readily seen to have the required properties.
\end{proof}

The flow $\psi$ of Theorem \ref{finale} is constructed by ``juxtaposing'' the restriction of the flow $\phi$ to $W^u(x)$ and the local flow $\theta$ that we have constructed in the previous section. The important properties of these two objects are the following
\begin{enumerate}[($\phi$-1)]
\item $\phi$ is a continuous flow on $W^u(x)$.
\item The open set $V\subset W^u(x)$ is strictly positively invariant with respect to $\phi$.
\item For every $p\in W^u(x)\setminus \{x\}$ there exists $t>0$ such that $\phi^t(p)\in V$. 
\item For every $p\in W^u(x)$ the backward orbit $\phi^t(p)$ converges to $x$.
\end{enumerate}
\begin{enumerate}[($\theta$-1)]
\item $\theta$ is a positively complete continuous local flow on an open neighborhood of $\overline{V}\cap W^u(x)$.
\item The open set $V\subset W^u(x)$ is strictly positively invariant with respect to $\theta$.
\item There exists a continuous map $\omega_{\theta}: \overline{V}\cap W^u(x) \rightarrow \overline{W^u(x)} \setminus W^u(x)$ such that for every $p\in \overline{V}\cap W^u(x)$ the curve $\theta^t(p)$ tends to $\omega_{\theta}(p)$ for $t\rightarrow +\infty$.
\item For every $p\in V$ there exists $t<0$ such that $\theta^t(p)\in V^c:= W^u(x) \setminus V$.
\end{enumerate}
Properties ($\phi$-1) and ($\phi$-4) are clear. Property  ($\phi$-2) holds because the forward orbit of any $p\in W^u(x)$ converges a stationary point on the closure of $\overline{W^u(x)}$ and hence eventually enters $V$. Properties ($\phi$-2), ($\theta$-1), ($\theta$-2), ($\theta$-3) and ($\theta$-4) have been proven in Propositions \ref{converge}, \ref{strictpos} and \ref{negcompl} (see also Remark \ref{estensione}). 
Thanks to the conditions ($\phi$-1), ($\phi$-2), ($\theta$-1), ($\theta$-2) and ($\theta$-4), we can define a new flow
\[
\psi = \phi \#_V \theta
\]
on $W^u(x)$ that coincides with $\phi$ as long as we are outside of $V$ and inside $V$ switches to $\theta$. This new flow is defined by
\[
\psi^t(p) := \left\{ \begin{array}{ll} \theta^{(t-\tau(p))^+} \circ \phi^{t\wedge \tau(p)}(p) & \mbox{if } p\in V^c, \\ \phi^{-(t-\sigma(p))^-} \circ \theta^{t\vee \sigma(p)}(p) & \mbox{if } p\in \overline{V}, \end{array} \right.
\]
where $\tau$ and $\sigma$ denote the entrance times functions into $V$ of $\phi$ and $\theta$, respectively:
\[
\tau(p) := \inf \{ t\in \R \mid \phi^t(p) \in V \}, \qquad  \sigma(p) := \inf \{ t\in \R \mid \theta^t(p) \in V \}.
\]
Notice that the function $\tau$ is real valued on $W^u(x)\setminus \{x\}$ by ($\phi$-3).  Both $\tau$ and $\sigma$ are continuous as $\overline{\R}$-valued functions thanks to ($\phi$-2) and ($\theta$-2). See Appendix \ref{juxtsec} for more details about the juxtaposition of flows (under slightly more general assumptions) and for the proof that the above formula indeed defines a continuous flow on $W^u(x)$.

We just need to check that $\psi$ satisfies the conditions (i)-(iv) that are stated in Theorem \ref{finale}. The point $x$ is the only stationary point of $\psi$, because it is the only stationary point of $\phi$, it is not in $V$ and thanks to ($\theta$-4). Moreover, $\psi^t(p)=\phi^t(p)$ for every $t\leq 0$ and $p\in V^c$, which is a neighborhood of $x$. This proves (i).
By the same facts, together with ($\phi$-4), (ii) holds: the $\alpha$-limit of any $p\in W^u(x)$ under $\psi$ is $x$. If $p\in V$ then $\psi^t(p)$ converges to $\omega_{\theta}(p)$ for $t\rightarrow +\infty$, thanks to ($\theta$-3). If $p\in V^c\setminus \{x\}$, then $\psi^t(p)$ converges to $\omega_{\theta}(\phi^{\tau(p)})$, thanks to  ($\phi$-3) and ($\theta$-3). Therefore, $\psi$ satisfies conditions (iii) and (iv), where the map $\omega_{\psi}$ is  the composition
\[
\omega_{\psi}(p) := \omega_{\theta}\bigl( \phi^{\tau(p)^+}(p) \bigr),
\]
which is clearly continuous on $W^u(x)\setminus \{x\}$.

\begin{rem}
As any smooth flow, $\phi$ is the flow of a smooth vector field $X$. The local flow $\theta$ is also defined by integrating a vector field $Y$, but of lower regularity. Instead of juxtaposing the two flows, one could have tried to obtain $\psi$ as the flow of a unique vector field on $W^u(x)$, produced by patching $X$ and $Y$ by some smooth partition of unity, so that $X$ acts near the stationary point $x$ while $Y$ acts outside of a compact neighborhood of $x$. A difficulty with this approach is that it is not clear whether the resulting vector field is uniquely integrable. Indeed, $X$ is uniquely integrable because it is smooth, while $Y$ is uniquely integrable because it is tangent to suitable foliations and is smooth on each of their leaves. By patching together $X$ and $Y$ both properties would be lost.
\end{rem}

\appendix

\section{A gradient flow of a Morse function with an unstable manifold that is not a cell}
\label{nonsmale}

\numberwithin{equation}{section}
\setcounter{equation}{0}

Throughout this appendix, we call a subset $W\subset M$ of a manifold $M$ a $k$-cell if it is the image of a homeomorphism from the open $k$-ball $B^k\subset \R^k$ that extends to a continuous map from the closure of $B^k$ into $\overline{W}\subset M$. Theorem \ref{introthm1} from the Introduction states that the unstable manifolds of stationary points of a Morse-Smale gradient-like flow on a closed manifold $M$ are cells. Here we wish to discuss what happens if we drop the Smale transversality assumption.

If the closed manifold $M$ is one-dimensional, then every Morse gradient-like flow is automatically Smale, just because the unstable manifolds of its stationary points are either points or open intervals. 
In dimension two, the negative gradient flow of a Morse function need not be Smale. The standard counterexample is given by the height function $f(x,y,z)=z$ on the torus of revolution $M\subset \R^3$ that is obtained by revolving the circle
\[
\{(0,y,z) \mid y^2 + (z-2)^2 = 1\}
\]
around the $y$-axis. This Morse function has two critical points $p=(0,0,1)$ and $q=(0,0,-1)$ of index one, and the negative gradient flow with respect to the metric induced by the ambient Euclidean metric of $\R^3$ is such that the intersection
\[
W^u(p) \cap W^s(q) = \{(x,0,z) \mid x^2 + z^2 = 1\} \setminus \{p,q\}
\]
is non-transverse. In dimension two, however, the unstable manifold of a stationary point of index $k$ of a Morse gradient-like flow is always a $k$-cell, regardless of the Smale condition. This is obivious for stationary points of index zero and one - in any dimension - and when $\dim M=2$ it can be proven also for stationary points of index two because their unstable manifolds are open balls with a nice boundary. If the closed manifold $M$ is two-dimensional, one could also prove that the Smale condition is equivalent to the fact that the cells given by the unstable manifolds of all stationary points of a Morse gradient-like flow form a $CW$-decomposition of $M$.

Starting from dimension three, it is possible to construct negative gradient flows of Morse functions having some critical point whose unstable manifold is not a cell. Here we exhibit an example in dimension three. Higher dimensional examples can be constructed in a similar fashion, or by lifting this three-dimensional example to some product manifold.

We start by considering a smooth Morse function $f: \R^3 \rightarrow \R$ with the following properties:
\begin{enumerate}[(a)]
\item The function $f$ is odd and symmetric with respect to the $z$-axis: 
\[
f(x,y,z) = \hat{f}\bigl(\sqrt{x^2+y^2},z\bigr),
\]
with $\hat{f}(r,-z)= - \hat{f}(r,z)$.
\item There exists $R>0$ such that $\hat{f}(r,z) = z$ for $z^2+r^2 \geq R^2$.
\item The function $z\mapsto \hat{f}(0,z)$ has four non-degenerate critical points on $\R$: $-b<-a<a<b$ which are a local maximum, a local minimum, a local maximum and a local minimum, respectively, with
\[
\hat{f}(0,a) = \alpha > \beta = \hat{f}(0,b) > 0.
\]
\item The function $f$ has four critical points on $\R^3$, namely $(0,0,-b)$, $(0,0,-a)$, $(0,0,a)$ and $(0,0,b)$ of Morse index one, zero, three and two, respectively.
\item The stable and unstable manifolds with respect to the negative gradient flow $\phi_0$ of $f$ induced by the Euclidean metric on $\R^3$ are the following subsets:
\begin{enumerate}[(i)]
\item $W^u_0(0,0,-b)$ is the half-line $\{(0,0,z) \mid z< -a\}$;
\item $W^s_0(0,0,-b)$ is the sphere $\Sigma$ of radius $b$ centered at the origin, minus the point $(0,0,b)$;
\item $W^u_0(0,0,-a)= \{(0,0,-a)\}$;
\item $W^s_0(0,0,-a)$ is the open ball of radius $b$ centered at the origin, minus the line-segment from $(0,0,a)$ to $(0,0,b)$.
\end{enumerate}
By the symmetry property (a), we then have: 
\[
\begin{split}
W^u_0(0,0,a) = - W^s_0(0,0,-a) \qquad W^s_0(0,0,a) = - W^u_0(0,0,-a), \\
W^u_0(0,0,b) = - W^s_0(0,0,-b) \qquad W^s_0(0,0,a) = - W^u_0(0,0,-a).
\end{split}
\]
\end{enumerate}

\begin{figure}[h]
\label{fig1}
\centerline{\scalebox{.3}{\includegraphics{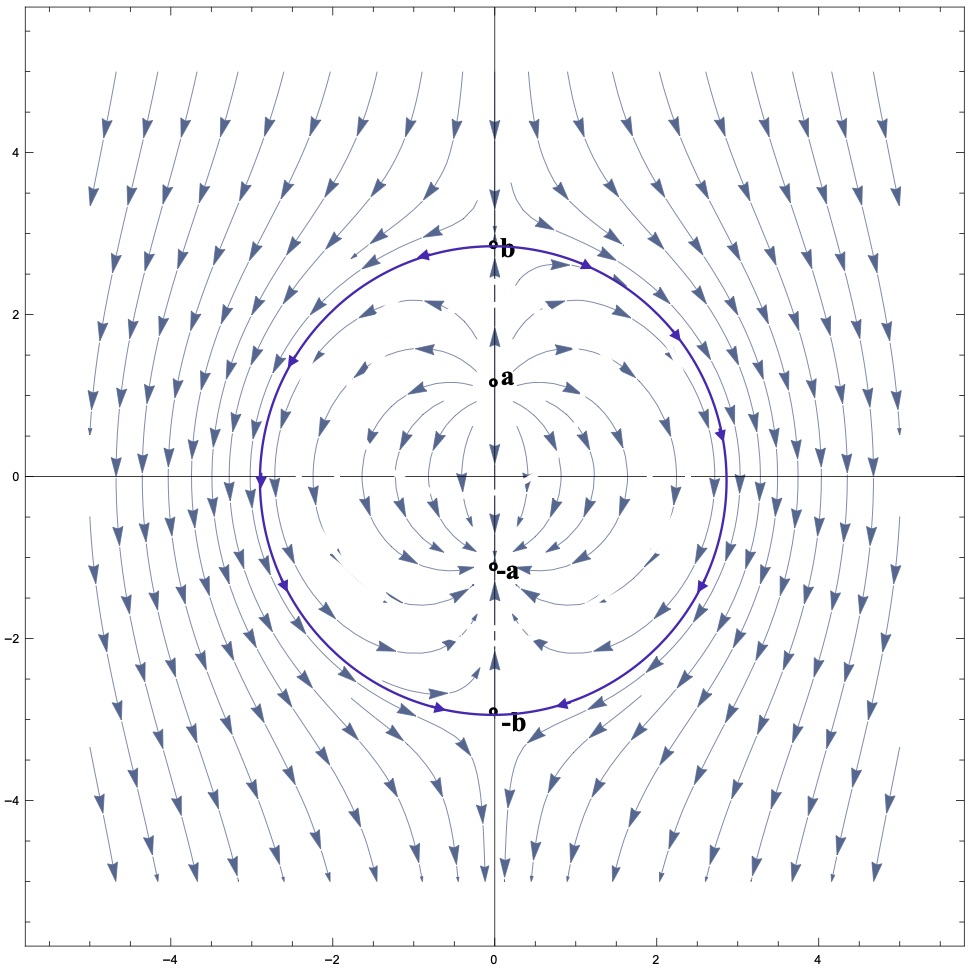}}}
\caption{Phase portrait of the flow $\phi_0$ in a plane containing the $z$-axis.}
\end{figure}

Figure 1 represents the flow lines of $\phi_0$ in a plane passing through the $z$-axis.  An explicit function that satisfies all these assumptions, except for (b), is defined by choosing
\[
\hat{f}(r,z) = z e^{\frac{8}{r^2+z^2+3}}.
\]
Indeed, $\hat{f}$ has the symmetry properties required in (a) and a simple computation shows that
\[
(\partial_r \hat{f}, \partial_z \hat{f}) = \rho F
\]
with
\[
\begin{split}
\rho(r,z) &:= \frac{1}{(r^2+z^2+3)^2} e^{\frac{8}{r^2+z^2+3}} > 0 , \\ F(r,z) &:= \bigl(-16 rz, (r^2+z^2+4z+3)(r^2+z^2-4z+3) \bigr).
\end{split}
\]
This implies that the critical points of $f$ are $(0,0,-3)$, $(0,0,-1)$, $(0,0,1)$ and $(0,0,3)$, at which we have
\[
f(0,0,3) = - f(0,0,-3) = e^{2/3}, \qquad f(0,0,1) = - f(0,0,-1) = e^2.
\] 
Moreover, the Hessian matrixes of $\hat{f}$ at $(0,3)$ and $(0,1)$ are easily computed to be
\[
\hat{f}'' (0,3) = \frac{e^{2/3}}{3} \left( \begin{array}{cc} -1 & 0 \\ 0 & 1 \end{array} \right), \qquad   
\hat{f}'' (0,1) = e^2 \left( \begin{array}{cc} -1 & 0 \\ 0 & -1 \end{array} \right).
\]
This implies that the critical points $(0,0,3)$ and $(0,0,1)$ have Morse index two and three, respectively. By symmetry, the critical points $(0,0,-3)$ and $(0,0,-1)$ have Morse index one and zero, respectively. This shows that $f$ satisfies (c) and (d) with $a=1$, $b=3$, $\alpha=e^2$ and $\beta=e^{2/3}$. The properties (i) and (iii) in (e) are easy to check. Properties (ii) and (iv) follow from the fact that the function
\[
\hat{g}(r,z) := -r + \frac{8r}{r^2+z^2-1} = r \frac{r^2+z^2-9}{r^2+z^2-1}
\]
satisfies 
\[
\partial_r \hat{f} \partial_r \hat{g} + \partial_z \hat{f} \partial_z \hat{g} = 0
\]
and hence the function 
\[
g(x,y,z) := g\bigl( \sqrt{x^2+y^2},z\bigr)
\]
is constant along the negative gradient flow lines of $f$. In particular, the sphere 
\[
\Sigma = \{r^2+z^2=9\} \subset g^{-1}(0)
\]
is invariant under $\phi_0$ and since its tangent space at $(0,0,3)$ is precisely the negative eigenspace of the Hessian of $f$,  (ii) holds and (iv) easily follows. The missing property (b) can be easily achieved by modifying $\hat{f}$ outside of a large ball.

The negative gradient flow $\phi_0$ of $f$ is not Smale because the two-dimensional unstable manifold of $(0,0,b)$ is nowhere transversal to the two-dimensional stable manifold of $(0,0,-b)$: their intersection is the two-dimensional submanifold $\Sigma\setminus \{(0,0,b),(0,0,-b)\}$. We shall now perturb the Euclidean metric so that the unstable manifold of $(0,0,b)$ with respect to the negative gradient flow $\phi$ of $f$ induced by the perturbed metric is not a 2-cell. For this purpose, consider the sequence of points $p_k$ on $\Sigma$ whose cylindrical coordinates are $r=3$, $\theta=1/k$ and $z=0$. The sequence $p_k$ converges to the point $p:=(1,0,0)\in \Sigma$. For every even natural number $k$, we can perturb the Euclidean negative gradient of $f$ in a small neighborhood of $\phi_0^1(p_k)$ by adding a small vector field that is normal to $\Sigma$ and inward pointing. If we denote by $\phi$ the flow of the perturbed vector field, we obtain that $\phi^t(p_k)$ converges to $(0,0,-a)$ for $t\rightarrow +\infty$, for every even $k$. The support of this perturbation can be chosen in such a way that the orbits of $p_k$ for $k$ odd are unaffected, and in particular still converge to $(0,0,-b)$ for $t\rightarrow +\infty$. The backwards orbits of all $p_k$'s are also unaffected, and hence they all belong to the unstable manifold $W^u(0,0,b)$ with respect to $\phi$.

By choosing the perturbation to be small, we obtain that $f$ is still a Lyapunov function for $\phi$. Together with the fact that the perturbation is supported away from the critical set of $f$, we obtain that $\phi$ is the negative gradient flow of $f$ with respect to some small perturbation of the Euclidean metric. We claim that the two-dimensional unstable manifold $W^u(0,0,b)$ is not a 2-cell.

Assume by contradiction that there is a continuous map
\[
h: \overline{B}^2 \rightarrow \overline{W^u(0,0,b)}
\]
whose restriction to $B^2$ is a homeomorphism onto $W^u(0,0,b)$. Consider the curves
\[
\gamma_k(t) := h^{-1}(\phi^t(p_k)), \qquad t\in \R.
\]
These curves are pairwise disjoint and converge to the common limit $h^{-1}(0,0,b)\in B^2$ for $t\rightarrow -\infty$. For $t\rightarrow +\infty$ each $\gamma_k(t)$ has a compact, connected and non-empty limit set $J_k$ that is contained in the circle $\partial B^2$. Since $h\circ \gamma_k(t)$ converges to $(0,0,-a)$ for $k$ even and to $(0,0,-b)$ for $k$ odd, we deduce that $f\circ h$ takes the value $-\alpha$ on the $J_k$'s with $k$ even and the value $-\beta$ on those with $k$ odd.
From the cyclic ordering of the curves $\gamma_k$ we obtain that the $J_k$'s are pairwise disjoint and cyclically ordered in the circle $\partial B^2$. By compactness of $\partial B^2$, we can find a $\xi$ in $\partial B^2$ that is the limit of a sequence $\xi_k$ with $\xi_k\in J_k$. The fact that $f\circ h(\xi_k)$ takes the value $-\alpha$ for $k$ even and $-\beta$ for $k$ odd violates the continuity of $f\circ h$ at $\xi$. This contradiction shows that a map $h$ with the above properties cannot exist, and hence $W^u(0,0,b)$ is not a 2-cell.

Thanks to property (b), any flow on a three-manifold can be locally modified near a non-stationary point by plugging in a copy of $\phi$ in a local chart.  If the original flow is a Morse gradient flow, so is the modified one. The modified flow has a  hyperbolic stationary point of Morse index two whose unstable manifold is not a 2-cell.

\section{Appendix: Juxtaposition of flows}
\label{juxtsec}

In this appendix we recall some terminology about (local) flows and we show how a flow can be juxtaposed to a positively complete local flow and produce a new flow. A {\em local flow} on the metric space $W$ is a continuous map
\[
\phi: \mathrm{dom}\, (\phi) \rightarrow W
\]
where $\mathrm{dom}\, (\phi)$ is an open neighborhood of $\{0\} \times W$ in $\R \times W$ such that for every $x\in W$ the set 
\[
I_x:= \set{t\in \R}{(t,x)\in \mathrm{dom}\, (\phi)}
\]
is an interval and the following group law holds: for every $(t,x)\in \mathrm{dom}\, (\phi)$ and every $s\in \R$ we have
\[
\bigl(s,\phi^t(x)\bigr) \in \mathrm{dom}\, (\phi) \qquad \Leftrightarrow \qquad (t+s,x)\in \mathrm{dom}\, (\phi),
\]
and in this case
\[
\phi^{t+s}(x) = \phi^s \bigl( \phi^t(x) \bigr).
\]
Here we are using the standard notation
\[
\phi^t(x) := \phi(t,x), 
\]
An open subset $V$ is said to be {\em positively invariant} if for every $x\in V$ and every $t\in I_x \cap [0,+\infty)$ we have $\phi^t(x)\in V$. If moreover for every $x\in \partial V$ and every $t\in I_x \cap (0,+\infty)$ we have $\phi^t(x)\in V$, the set $V$ is said to be {\em strictly positively invariant}.

The local flow $\phi$ is said to be {\em positively} (resp.\ {\em negatively}) {\em complete} if for every $x\in W$ the interval $I_x$ is unbounded from above (resp.\ from below). A local flow which is at the same time positively and negatively complete is a {\em flow}. 

Now let $A\subset W$ be positively (resp.\ negatively) invariant for the local flow $\phi$ on $W$. 
The local flow $\phi$ is said to be {\em positively} (resp.\ {\em negatively}) {\em complete relative to $A$} if for every $x\in W$ either the interval $I_x$ is unbounded from above (resp.\ from below) or there exists $t>0$ (resp.\ $t<0$) such that $\phi(t,x)\in A$. 

Assume that $V\subset W$ is a strictly positively invariant open subset of $W$ such that $\phi$ is positively complete relative to $V$ and negatively complete relative to $V^c$. Then the {\em entrance time} of $\phi$ in $V$, that is the function
\[
\tau: W \rightarrow \overline{\R}, \qquad \tau(x):= \inf \{ t\in \R \mid (t,x)\in \mathrm{dom}\, (\phi) \mbox{ and } \phi^t(x)\in V\},
\]
is continuous. Indeed, its upper semicontinuity follows from the fact that it is the first entrance time into an open set relative to which the flow is positively complete. By the strict positive invariance of $V$, $\tau$ is also the first entrance time into the closed set $\overline{V}$, and hence it is lower semicontinuous, by the negative completeness relative to $V^c$. Actually, 
\[
\tau^{-1}(\{0\})=\partial V, \quad \tau^{-1}([-\infty,0))=V, \quad \tau^{-1}((0,+\infty])=\overline{V}^c.
\]

Now let $W$ be a metric space and $V\subset W$ be an open subset. Let $\phi$ be a flow on $W$ and let $\theta$ be a positively complete local flow on an open neighborhood of $\overline{V}$. We assume that $V$ is strictly positively invariant for both $\phi$ and $\theta$. We also assume that $\theta$ is negatively complete with respect to $V^c$. Under these assumptions, we can define a new flow on $W$ which coincides with $\phi$ as long as we are outside $V$ and inside $V$ switches to $\theta$. In order to give a formal definition, we denote by $\tau$ the entrance time of $\phi$ in $V$ and by $\sigma$ be the entrance time of $\theta$ in $V$. By what we have observed above, the functions $\tau$ and $\sigma$ are continuous. We define a new map
\[
\psi: \R \times W \rightarrow W
\]
by
\begin{equation}
\label{juxtflow}
\psi^t(x) := \left\{ \begin{array}{ll} \theta^{(t-\tau(x))^+} \circ \phi^{t\wedge \tau(x)}(x) & \mbox{if } x\in V^c, \\ \phi^{-(t-\sigma(x))^-} \circ \theta^{t\vee \sigma(x)}(x) & \mbox{if } x\in \overline{V}. \end{array} \right.
\end{equation}
Here we are using the notation
\[
a^+ := \max\{a,0\}, \quad a^-:= - \min\{a,0\}, \quad a\vee b := \max\{a,b\}, \quad a \wedge b := \min\{a,b\}.
\]
Notice that if $x$ belongs to $V^c \cap \overline{V} = \partial V$, then $\tau(x) = \sigma(x) = 0$ and hence the two expressions
\[
 \theta^{(t-\tau(x))^+} \circ \phi^{t\wedge \tau(x)}(x) = \theta^{t^+} \circ \phi^{-t^-}(x)
 \]
 and
 \[
 \phi^{-(t-\sigma(x))^-} \circ \theta^{t\vee \sigma(x)}(x) = \phi^{-t^-} \circ \theta^{t^+} (x)
 \]
 define the same point in $W$. Therefore, the map $\psi$ is well defined. By the continuity of $\phi$, $\theta$, $\tau$ and $\sigma$, we deduce that $\psi$ is continuous on the closed sets $\R \times V^c$ and $\R \times \overline{V}$, and hence on their union $\R \times W$. This shows that the map $\psi$ is continuous.
 
 Clearly, $\psi^0$ is the identity on $W$. Checking the group law
 \[
 \psi^{s+t}(x) = \psi^s ( \psi^t(x) ) \qquad \forall x\in W, \; \forall s,t\in \R,
 \]
 consists in an elementary case distinction. We just treat the case $x\in V^c$ and leave the completely analogous case $x\in V$ to the patient reader. 
 
 Since $x$ is in $V^c$, we have
 \[
 \psi^{s+t}(x) = \theta^{(s+t-\tau(x))^+} \circ \phi^{(s+t)\wedge \tau(x)} (x) \qquad \mbox{and} \qquad \psi^t(x) = \theta^{(t-\tau(x))^+} \circ \phi^{t \wedge \tau(x)} (x).
 \]
 We distinguish the cases $t< \tau(x)$ and $t\geq \tau(x)$. In the case $t< \tau(x)$, the point $\psi^t(x)$ coincides with $\phi^t(x)$ and belongs to $V^c$. Moreover, 
 \[
 \tau(\psi^t(x)) = \tau(\phi^t(x)) = \tau(x)  - t,
 \]
 and hence
 \[
 \begin{split}
 \psi^s(\psi^t(x)) &= \theta^{(s-\tau(\psi^t(x)))^+} \circ \phi^{s\wedge \tau(\psi^t(x))}(\psi^t(x)) = \theta^{(s+t-\tau(x))^+} \circ \phi^{s\wedge (\tau(x)-t)} \circ \phi^{t} (x) \\ &= \theta^{(s+t-\tau(x))^+} \circ \phi^{(s+t)\wedge \tau(x)} (x) = \psi^{s+t}(x),
 \end{split}
 \]
 so the group law holds. In the case $t\geq \tau(x)$, the point $\psi^t(x)$ coincides with $\theta^{t-\tau(x)}\circ \phi^{\tau(x)} (x)$ and belongs to $\overline{V}$. Moreover,
 \[
 \sigma(\psi^t(x)) = \sigma\bigl( \theta^{t-\tau(x)}\circ \phi^{\tau(x)} (x) \bigr) = \tau(x) - t,
 \]
and hence
\[
\psi^s(\psi^t(x)) = \phi^{-(s-\sigma(\psi^t(x)))^-} \circ \theta^{s\vee \sigma(\psi^t(x))} (\psi^t(x)) = \phi^{-(s+t-\tau(x))^-} \circ \theta^{s\vee (\tau(x)-t)} \circ \theta^{t-\tau(x)} \circ \phi^{\tau(x)} (x).
\]
If $s+t-\tau(x)\geq 0$ then the last expression coincides with
\[
\theta^{s+t-\tau(x)}\circ \phi^{\tau(x)} (x) =  \theta^{(s+t-\tau(x))^+} \circ \phi^{(s+t)\wedge \tau(x)} (x)= \psi^{s+t}(x).
\]
If $s+t-\tau(x)< 0$ then it coincides with
\[
\phi^{s+t-\tau(x)} \circ \phi^{\tau(x)} = \phi^{s+t}(x) = \theta^{(s+t-\tau(x))^+} \circ \phi^{(s+t)\wedge \tau(x)} (x) = \psi^{s+t}(x).
\]
In both cases the group law is verified.

The new flow $\psi$ is also denoted by the symbol 
\[
\phi \#_V \theta: \R \times W \rightarrow W.
\]
We summarize the above result into the following statement.

\begin{prop}
Let $V$ be an open subset of the metric space $W$.
Let $\phi$ be a flow on $W$ and $\theta$ a positively complete local flow on an open neighborhood of $\overline{V}$. We assume that $V$ is strictly positively invariant for both $\phi$ and $\theta$, and that $\theta$ is negatively complete relative to $V^c$. Then the formula (\ref{juxtflow}) defines a flow $\psi=\phi \#_V \theta$ on $W$.
\end{prop}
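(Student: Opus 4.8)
The plan is to verify the four defining properties of a flow for the map $\psi$ given by (\ref{juxtflow}): that it is well defined, continuous, equal to the identity at time $0$, and satisfies the group law. Since $\psi$ is by construction defined on all of $\R\times W$, positive and negative completeness are then automatic (each $I_x$ equals $\R$), so these four checks suffice. As the discussion preceding the statement already indicates, the substance lies in the continuity of the two entrance-time functions $\tau$ (for $\phi$) and $\sigma$ (for $\theta$), together with a finite case analysis for the group law; the rest is bookkeeping.

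First I would record the continuity of $\tau$ and $\sigma$, which is where the hypotheses are really used. Upper semicontinuity of $\tau$ follows because $V$ is open and $\phi$ is positively complete relative to $V$: an orbit entering $V$ at some finite time enters it no later under a small perturbation of the initial point. Lower semicontinuity uses the strict positive invariance of $V$, which makes $\tau$ also the first entrance time into the \emph{closed} set $\overline{V}$; negative completeness of $\phi$ relative to $V^c$ then gives lower semicontinuity of this first-hitting time. The same argument applies verbatim to $\sigma$ and $\theta$. Along the way one records the level-set description $\tau^{-1}(\{0\})=\partial V$, $\tau^{-1}([-\infty,0))=V$, $\tau^{-1}((0,+\infty])=\overline{V}^c$ (and likewise for $\sigma$), the cocycle identities $\tau(\phi^t(x))=\tau(x)-t$ and $\sigma(\theta^t(x))=\sigma(x)-t$, and the relation $\sigma(\theta^{t-\tau(x)}\circ\phi^{\tau(x)}(x))=\tau(x)-t$, valid for $x\in V^c$ and $t\ge\tau(x)$ since then $\phi^{\tau(x)}(x)\in\partial V$.

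With these facts in hand, well-definedness is checked only on the overlap $V^c\cap\overline{V}=\partial V$, where $\tau=\sigma=0$ forces the two branches of (\ref{juxtflow}) to agree, namely $\theta^{t^+}\circ\phi^{-t^-}(x)=\phi^{-t^-}\circ\theta^{t^+}(x)$ because $\tau$ and $\sigma$ vanish. Continuity follows from the pasting lemma: $\psi$ restricts to a continuous map on each of the closed sets $\R\times V^c$ and $\R\times\overline{V}$, being a composition of the continuous data $\phi,\theta,\tau,\sigma$ with the continuous operations $t\mapsto t^+$, $t\mapsto t^-$, $\wedge$, $\vee$, and these two closed sets cover $\R\times W$. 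The identity $\psi^0=\mathrm{id}_W$ is immediate from the formula.

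Finally, the group law $\psi^{s+t}=\psi^s\circ\psi^t$ is verified by a case distinction on whether $x\in V^c$ or $x\in\overline{V}$ and, within each, on the signs of $t-\tau(x)$ (resp.\ $t-\sigma(x)$) and then of $s+t-\tau(x)$ (resp.\ $s+t-\sigma(x)$); the cocycle identities above make each sub-case a one-line simplification of nested $\wedge$/$\vee$ expressions. The case $x\in V^c$ is carried out in the text above, and the case $x\in\overline{V}$ is strictly symmetric under the exchange $\phi\leftrightarrow\theta$, $\tau\leftrightarrow\sigma$ and reversal of the time direction. There is no genuine obstacle: the only delicate input is the continuity of $\tau$ and $\sigma$ (which is where strict positive invariance and the two relative-completeness hypotheses enter), and the only thing to be careful about afterwards is keeping track of the cases in the group-law computation.
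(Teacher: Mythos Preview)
Your proposal is correct and follows essentially the same approach as the paper: first the continuity of the entrance-time functions $\tau$ and $\sigma$ (from strict positive invariance plus the relative completeness hypotheses), then well-definedness on $\partial V$, continuity by pasting on the closed cover $\R\times V^c$ and $\R\times\overline{V}$, the identity at time zero, and finally the group law by the same case distinction on $x\in V^c$ versus $x\in\overline{V}$ together with the cocycle identities for $\tau$ and $\sigma$. The only cosmetic point is your phrasing of the well-definedness check: the two expressions $\theta^{t^+}\circ\phi^{-t^-}(x)$ and $\phi^{-t^-}\circ\theta^{t^+}(x)$ agree not literally ``because $\tau$ and $\sigma$ vanish'' but because for every $t$ one of $t^+$, $t^-$ is zero, making one of the two maps in each composition the identity; this is what the paper observes as well.
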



\providecommand{\bysame}{\leavevmode\hbox to3em{\hrulefill}\thinspace}
\providecommand{\MR}{\relax\ifhmode\unskip\space\fi MR }
\providecommand{\MRhref}[2]{%
  \href{http://www.ams.org/mathscinet-getitem?mr=#1}{#2}
}
\providecommand{\href}[2]{#2}

\end{document}